\newtheorem{definition}{Definition}
\newtheorem{theorem}{Theorem}[section]
\newtheorem{lemma}[theorem]{Lemma}
\newtheorem{remark}[theorem]{Remark}
\newtheorem{corollary}[theorem]{Corollary}
\newtheorem{assumption}{Assumption}[]
\newcommand{\dd}{\mathrm{d}}
\newcommand{\N}{\mathbb{N}}
\newcommand{\R}{\mathbb{R}}
\newcommand{\Co}{\mathcal{C}}
\newcommand{\M}{\mathcal{M}}
\newcommand{\Ker}{\text{Ker}}
\newcommand{\Rg}{\mathcal{R}}
\renewcommand{\deg}{\text{deg}}
\newcommand{\dist}{\textnormal{dist}}
\renewcommand{\L}{\mathcal{L}}
\newcommand{\supp}{\mathrm{supp}\,}
\newcommand{\ep}{\mathcal{\varepsilon}}
\newtheorem*{ex*}{Exercice}
	\newcounter{stepnum}
\title{Concentration estimates in a multi-host epidemiological model structured by phenotypic traits}
\date{}
\author{Jean-Baptiste Burie$^{(a)}$, Arnaud Ducrot$^{(b)}$, Quentin Griette$^{(a)}$ and Quentin Richard$^{(c)}$ \\
$^{(a)}$ IMB, UMR CNRS 5251, Université de Bordeaux. \\
F-33400 Talence, France. \\
$^{(b)}$ Normandie Univ, UNIHAVRE, LMAH, FR-CNRS-3335,\\ ISCN, 76600 Le Havre, France.\\
$^{(c)}$ MIVEGEC, IRD, UMR CNRS 5290, Université de Montpellier. \\
F-34394 Montpellier, France.}
\begin{document}
\maketitle

\begin{abstract}
In this work we consider a nonlocal system modelling the evolutionary adaptation of a pathogen within a multi-host population of plants.
	Here we focus our analysis on the study of the stationary states. We first discuss the existence of nontrivial equilibria using dynamical system arguments. Then we introduce a small parameter $0<\varepsilon\ll 1$ that characterises the width of the mutation kernel, and we describe the asymptotic shape of steady states with respect to $\varepsilon$. In particular, for $\ep\to 0$ we show that the distribution of the pathogen approaches a singular measure concentrated on the maxima of fitness in each plant population.  This asymptotic description allows us to show the local stability of each of the positive steady states for $\varepsilon \ll 1$, from which we deduce a  uniqueness result for the nontrivial stationary states by means of a topological degree argument. These analyses rely on a careful investigation of the spectral properties of some {nonlocal} operators.
\end{abstract}

\vspace{0.2in}\noindent \textbf{Keywords:} Nonlocal equation, steady state solutions,  concentration phenomenon, epidemiology, population genetics.

\vspace{0.1in}\noindent \textbf{2010 Mathematical Subject Classification:} 35B40, 35R09, 47H11, 92D10, 92D30.

\section{Introduction}

In this work we study the stationary states of the following system of equations 
\begin{equation}\label{Eq:Model}
\begin{dcases} 
   \frac{\dd S_k}{\dd t}  (t)=\xi_k \Lambda -\theta S_k(t)-S_k(t)\int_{\R^N}\beta_k( y)A(t,y)\dd y, \quad k=1,2, \\  
  \frac{\partial I_k}{\partial t}  (t,x)= \beta_k(x)S_k(t)A(t,x)-(\theta+d_k(x))I_k(t,x),\quad  k=1,2, \\  
  \frac{\partial A}{\partial t}  (t,x)= -\delta A(t,x)+\int_{\R^N}m_\ep (x-y)\left[r_1(y)I_1(t,y)+r_2(y)I_2(t,y)\right]\dd y.
\end{dcases}
\end{equation} 
The above system describes the evolution of a pathogen producing spores in a heterogeneous plant population with two hosts. This model has been proposed in \cite{Iacono2012} to study the impact of resistant plants on the evolutionary adaptation of a fungal pathogen.

Here the state variables are nonnegative functions. The function $S_k(t)$ denotes the healthy tissue density of each host $k\in\{1,2\}$, $I_k(t,x)$ represents the density of tissue infected by pathogen with phenotypic trait value $x\in\R^N$, while  $A(t,x)$ describes the density of airborne spores of pathogens with phenotypic trait value $x\in\R^N$. Here $N\in \mathbb N\setminus\{0\}$ is a given and fixed integer.

The positive parameters $\Lambda, \theta, \delta$ respectively denote  the influx of total new healthy tissue, the death rate of host tissue and the death rate of the spores. The parameters $\xi_k \in(0,1)$ correspond to the proportions of influx of new healthy tissue for each host population and therefore satisfy the relation $\xi_1+\xi_2=1$. Note that in the absence of the disease, namely when $I_1=I_2=A=0$, the density of tissue at equilibrium for each host $k$ is equal to $\xi_k\Lambda/\theta$.

  The phenotypic traits of the pathogen considered in the model are supposed to influence the functions $r_k$, $\beta_k$  and $d_k$ that respectively denote the {spores'} production rates, the infection efficiencies and the infectious periods of the pathogen. Those parameters depend on the phenotypic value $x\in\R^N$ and the host $k=1,2$. 

The function $m_\ep$ is a probability kernel that characterises the mutations arising during the reproduction process. More precisely, given  tissue infected by a mother spore with phenotypic value $y$, $m_\ep(x-y)$ stands for the probability that a produced spore has a phenotypic value $x$. Therefore $m_\ep$ describes the dispersion in the phenotypic trait space $\R^N$ arising at each production of new spores. 
 
Here  we consider that produced spores cannot have a very different phenotypic value from the one of their mother. In other words, mutations are occurring within a small variance so that we assume that the mutation kernel is highly concentrated and depends on a small parameter $0<\ep\ll 1$ according to the following scaling form 
\begin{equation*}
m_{\ep }(x)=\dfrac{1}{\ep ^N}m\left(\dfrac{x}{\ep }\right), \quad \forall x\in \R^N,
\end{equation*}
where $m$ is a  fixed probability distribution (see Assumption \ref{Assump:Psi} in Section \ref{Sec:Main} below).\\

In this work we aim at studying the existence and uniqueness of nontrivial steady states for the above system of equations. We also investigate the shape of these steady states for $\ep\ll 1$ and we shall more precisely study their concentrations  around some specific phenotypic trait values when the mutation kernel is very narrow, \textit{i.e.} for $\ep\to 0$.\\

The above problem supplemented with an age of infection structure has been investigated by Djidjou \textit{et al.} \cite{Djidjou2018} using formal asymptotic expansions and numerical simulations. In the aforementioned work, the authors proved the convergence of the solution of the Cauchy problem toward highly concentrated steady states {(see \cite[Section 4]{Djidjou2018})}.

Moreover, the case of a single host population has already been studied thoroughly. A refined mathematical analysis of the stationary states has been carried out in \cite{Djidjou2017} with a particular emphasis on the concentration property for $\ep\ll 1$. We also refer to \cite{Burie2018, Burie2018b} for the study of the dynamical behaviour and the transient regimes of a corresponding simplified Cauchy problem for a single host.

 Model \eqref{Eq:Model} is related to the selection-mutation models for a population structured by a continuous phenotypic trait introduced in \cite{CrowKimura1964,Kimura} to study the maintenance of genetic variance in quantitative characters. Since then, several studies have been devoted to this class of models in which mutation is frequently  modelled  by either a nonlocal or a Laplace operator.  
In many of these works the existence of steady state solutions  is related to the existence of a positive eigenfunction of some linear operator and to the Krein-Rutman Theorem, see \textit{e.g.}  \cite{Bonnefon2017,Burger1996,Calsina2005,Calsina2013}. In particular, in \cite{Calsina2005,Calsina2013} it is assumed that the rate of mutations is small; in this case the authors are able to prove that the steady state solutions tend to concentrate around some specific trait in the phenotypic space as the mutation rate tends to 0. In \cite{Alfaro2018}, the steady state solutions for a nonlocal reaction-diffusion model for adaptation are given in terms of the principal eigenfunction of a  Schr\"odinger operator. 

As far as dynamical properties are concerned and under the assumption of small mutations, another fruitful approach introduced in \cite{Diekmann2005} consists in proving that the  solutions of the mutation selection problem are asymptotically given by a Hamilton-Jacobi equation. This approach has led to many works, see \textit{e.g.} \cite{Mirra2011,Mirra2012,Taing2018}.

Propagation properties have also been investigated in related models, see {\it e.g.} \cite{ABD2019} and \cite{Griette2019} for spatially  distributed systems of equations. \\

As already mentioned above, in this paper we are concerned with the steady states of \eqref{Eq:Model}.
Using the symbol  $\star$ to denote the convolution product in $\R^N$, steady state solutions of \eqref{Eq:Model} solve the following system of equations
\begin{equation}\label{Stationary}
\begin{dcases} 
   S_k=\frac{\xi_k \Lambda}{\theta+\int_{\R^N}\beta_k( y)A(y)\dd y}, \quad k=1,2, \\  
   I_k(x)= \frac{\beta_k(x)}{\theta+d_k(x)}S_k A(x),\quad  k=1,2, \\  
   \delta A(x)=m_\ep \star \left[r_1(\cdot) I_1(\cdot)+r_2(\cdot)I_2(\cdot)\right](x).
\end{dcases}
\end{equation}
The above system can be rewritten  in the form of a single  equation for $A=A^\ep\in L_+^1(\R^N)$ 
\begin{equation}\label{Eq:AepsIntro}
A^\ep=T^\ep\left(A^\ep\right),
\end{equation}
where the nonlinear operator $T^\ep$ is given by
\begin{equation}\label{T}
T^\ep(\varphi)=\sum_{k=1,2} \frac{ L_k^\ep(\varphi)}{1+\theta^{-1}\int_{\R^N}\beta_k(z)\varphi(z)\dd z}.
\end{equation}
Here, for $k=1,2$, $L_k^\ep$ denotes the following linear operator
\begin{equation}\label{Eq:Lk_ep}
L_k^\ep = \frac{\Lambda \xi_k}{\theta} m_\ep\star \left(\Psi_k\cdot\right),\qquad k=1,2,
\end{equation}
wherein $\Psi_k$ corresponds to the fitness function of the pathogen in  host $k$ 
\begin{equation}\label{Eq:Psi_k} 
\Psi_k(x)=\dfrac{\beta_k(x)r_k(x)}{\delta(\theta+d_k(x))}, \qquad  x\in \R^N, \qquad  k=1,2.
\end{equation}
Conversely, if $A^\ep\in L^1_+(\R^N)$ is a fixed point of $T^\ep$, a stationary solution $\left(S_1^\ep,S_2^\ep,I_1^\ep,I_2^\ep,A^\ep\right)$ to the original system \eqref{Eq:Model} can be reconstructed by injecting $A^\ep$ into the first two equations of    \eqref{Stationary}. The trivial solution $A^\ep\equiv 0$ is always solution of \eqref{Stationary} and corresponds to the disease-free equilibrium.  When  $A^\ep$ is nontrivial,  the corresponding stationary state is said to be endemic.

This paper is organized as follows. In Section \ref{Sec:Main}, we state the main results obtained in this work. In Section \ref{Sec:Thm1} we prove the existence of an  endemic (nontrivial) equilibrium for model \eqref{Eq:Model} by using dynamical system arguments and the theory of global attractors. In Section \ref{Sec:Thm2} we prove  that any nontrivial fixed point of \eqref{T}  
roughly behaves as the superposition of the solution of two single host problems, corresponding to the fixed points of the non-linear operators
\begin{equation}\label{eq:Tk}
T_k^\ep[\varphi]=	\frac{\xi_k\Lambda}{\theta} \frac{m_\ep\star \left(\Psi_k \varphi\right)}
{1+\theta^{-1}\int_{\mathbb R^N}\beta_k(z)\varphi(z) \dd z},
\end{equation}
provided the fitness functions $\Psi_k$ defined in \eqref{Eq:Psi_k} have disjoint supports.
 Finally, in Section \ref{Sec:Thm3}, we investigate the uniqueness of the non-trivial fixed point of $T^\ep$, for $\ep\ll 1$. Our analysis relies on the precise description of the shape of $A^\ep$ coupled with topological degree theory.

\section{Main results and comments} \label{Sec:Main}

In this section we state and discuss the main results that are proved in this paper. Throughout this manuscript we make the following assumption on the model parameters.
\begin{assumption}\label{Assump:Psi}
We assume that 
	\begin{enumerate}[label={\bf\alph*)}]
\item the parameters $\xi_1$, $\xi_2$, $\Lambda$, $\theta$ and $\delta$ are positive constants with $\xi_1+\xi_2=1$;
\item for each $k=1,2$, the functions $\beta_k$, $d_k$,  $r_k$ are continuous, nonnegative and bounded on $\R^N$ and the function $\Psi_k$ defined in \eqref{Eq:Psi_k} is not identically $0$ and  satisfies
$$
 \lim_{\|x\|\to \infty}\Psi_k(x)=0;
$$  
\item the function $m\in L^\infty_+(\R^N)\cap L^1_+(\R^N)$ is positive almost everywhere, symmetric and with  unit mass, \textit{i.e.}
	\begin{equation}\label{eq:supinL1}
		m(x) >0, \;m(-x)=m(x) \; \text{a.e. in } \; \R^N, \text{ and }\int_{\R^N}m(x)\dd x=1.
	\end{equation}
Moreover for every $R>0$, the function satisfies
\begin{equation*}
x\longmapsto \sup_{\|y\|\leq R}m(x+y)\in L^1(\R^N).
\end{equation*}
\end{enumerate}    
\end{assumption} 

{Assumption \ref{Assump:Psi} {c)} provides a simple condition on $m$ ensuring the compactness of the linear operators $L_k^\varepsilon$, see Lemma \ref{Lemma:Prel_1} for details. Note that such a condition holds true for a large class of kernel functions, and in particular for bounded kernels satisfying the following decay estimate at infinity for some $\alpha>N$ 
		\begin{equation*}
			m(x)=O\left(\frac{1}{\|x\|^\alpha}\right)\text{ as }\|x\|\to\infty.
		\end{equation*}
}

As already mentioned in the Introduction, in this work we discuss some properties of the nonnegative fixed points for the nonlinear operator $T^\ep$ in $L^1(\R^N)$. Recall that $A\equiv 0$ is always a solution of such an equation. Our first result provides a sharp condition for the existence of a nontrivial fixed point. This condition relies on the spectral radius $r_\sigma(L^\ep)$ of the linear bounded  operator  $L^\ep\in \mathcal L\left(L^1(\R^N)\right)$ defined by
\begin{equation}\label{Eq:L_ep}
L^\ep(\varphi) := L_1^\ep(\varphi)+L_2^\ep(\varphi)= \dfrac{\Lambda}{\theta} m_\ep\star\left[\left(\xi_1\Psi_1+\xi_2\Psi_2\right)\varphi\right], \forall \varphi\in L^1(\R^N).
\end{equation}
 Our first result reads as follows.
\begin{theorem}[Equilibrium points of System \eqref{Eq:Model}]\label{Thm:main}
Let Assumption \ref{Assump:Psi} be satisfied and let $\ep>0$ be given.
\begin{enumerate} 
\item[(i)] If $r_\sigma(L^\ep)\leq 1$, then $A\equiv 0$ is the unique solution of \eqref{Eq:AepsIntro} in $L^1_+(\R^N)$.
\item[(ii)] If $r_\sigma(L^\ep)>1$, then  there exists at least a continuous function $A^\ep>0$ such that
\begin{equation*}
A^\ep\in L^1(\R^N)\cap L^\infty(\R^N)\text{ and }A^\ep=T^\ep\left(A^\ep\right),
\end{equation*}
where the nonlinear operator $T^\ep$ is defined in \eqref{T}.
Furthermore, the solution $A^\ep$ belongs to $\Co_b(\R^N)$, the space of bounded and continuous functions on $\R^N$, and the family $\{A^\ep\}_{\ep>0}$ is uniformly bounded in $L^1(\R^N)$.
\end{enumerate} 
\end{theorem}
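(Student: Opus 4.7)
The key pointwise bound is $T^\ep(\varphi)\leq L^\ep(\varphi)$ for every $\varphi\in L^1_+(\R^N)$, since each denominator in \eqref{T} is at least $1$. Hence any nonnegative fixed point $A$ of $T^\ep$ satisfies $A\leq L^\ep(A)$. Assumption \ref{Assump:Psi}\,c), via the compactness Lemma \ref{Lemma:Prel_1} announced right after the Assumption, makes $L^\ep$ a positive compact operator on $L^1(\R^N)$, and Krein--Rutman furnishes an adjoint eigenfunction $\phi_*^\ep\in L^\infty_+(\R^N)$ satisfying $(L^\ep)^*\phi_*^\ep=r_\sigma(L^\ep)\,\phi_*^\ep$ (with positivity on the support of $\xi_1\Psi_1+\xi_2\Psi_2$ thanks to $m_\ep>0$ a.e.). Pairing $A\leq L^\ep(A)$ against $\phi_*^\ep$ gives
$$\langle A,\phi_*^\ep\rangle\leq r_\sigma(L^\ep)\,\langle A,\phi_*^\ep\rangle,$$
which immediately forces $A\equiv 0$ in the strictly subcritical case. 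For the critical case $r_\sigma(L^\ep)=1$, the resulting equality combined with $L^\ep(A)-A\geq 0$ and positivity of $\phi_*^\ep$ on the fitness support yields $L^\ep(A)=A$ a.e.; writing then $L^\ep(A)-T^\ep(A)=\sum_k L_k^\ep(A)\cdot\frac{\theta^{-1}\int\beta_k A}{1+\theta^{-1}\int\beta_k A}=0$ forces, for each $k$, either $\int\beta_k A=0$ or $L_k^\ep(A)=0$, both of which (using $\Psi_k\leq C\beta_k$ and $m_\ep>0$ a.e.) propagate to $A\equiv 0$.

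\textbf{Part (ii), setup and absorbing set.} In line with the announcement that Section~\ref{Sec:Thm1} relies on dynamical systems, the plan is to obtain $A^\ep$ as the $A$-component of a nontrivial equilibrium of the semiflow generated by \eqref{Eq:Model} in the positive cone of $\R^2\times L^1(\R^N)^3$. I would first check well-posedness and positivity of this semiflow. A bounded absorbing set is obtained by combining the dissipative identity
$$\frac{\dd}{\dd t}\sum_{k=1,2}\Bigl(S_k+\int_{\R^N}I_k\dd x\Bigr)=\Lambda-\theta\sum_{k=1,2}\Bigl(S_k+\int_{\R^N}I_k\dd x\Bigr)-\sum_{k=1,2}\int_{\R^N}d_k I_k\dd x$$
with the estimate $\delta\|A\|_{L^1}\leq \max_k\|r_k\|_\infty\sum_k\|I_k\|_{L^1}$ obtained by integrating the $A$-equation. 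Asymptotic smoothness of the semiflow comes from the smoothing effect of the convolution $m_\ep\star$ in the $A$-equation, and yields a compact global attractor via Hale's theorem.

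\textbf{Part (ii), locating a nontrivial equilibrium.} When $r_\sigma(L^\ep)>1$, the linearisation of the $(I,A)$-subsystem at the disease-free equilibrium has next-generation operator $L^\ep$: substituting the quasi-steady relation $i_k=\beta_k(\xi_k\Lambda/\theta)a/(\theta+d_k)$ into the $A$-equation reduces the linearised stationary problem to $a=L^\ep(a)$, which has spectral radius strictly greater than $1$. The corresponding Krein--Rutman eigenfunction supplies an explicit unstable direction. Combined with the compact attractor and a standard uniform persistence theorem (Hale--Waltman / Magal--Zhao), this forces uniform strong persistence of the disease and, inside the attractor, the existence of an equilibrium bounded away from the DFE; its $A$-component is the desired $A^\ep$. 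Continuity and $L^\infty$-boundedness of $A^\ep$ then follow automatically from the fixed-point relation $A^\ep=\sum_k L_k^\ep(A^\ep)/(1+\theta^{-1}\int\beta_k A^\ep)$ via Young's inequality (using $m_\ep\in L^1\cap L^\infty$) and continuity of translations in $L^1$. The uniform-in-$\ep$ $L^1$ bound comes directly from integrating $T^\ep(A^\ep)=A^\ep$: using $\Psi_k\leq\|r_k\|_\infty\beta_k/(\delta\theta)$ and $b/(1+b/\theta)\leq\theta$ for $b\geq 0$, one obtains $\|A^\ep\|_{L^1}\leq\Lambda\max_k\|r_k\|_\infty/(\delta\theta)$.

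\textbf{Main obstacle.} I expect the delicate point to be converting the spectral instability of $L^\ep$ into a quantitative repulsion statement on the nonlinear semiflow: one must rule out the possibility of positive trajectories slowly approaching the DFE, which typically requires a Lyapunov-type argument on the attractor combined with the positivity and compactness properties inherited from Assumption~\ref{Assump:Psi}\,c). The critical case $r_\sigma(L^\ep)=1$ in (i) also needs some extra care to establish enough positivity of the adjoint eigenfunction $\phi_*^\ep$.
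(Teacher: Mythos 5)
Your proposal is correct in outline but takes a genuinely different route from the paper on both parts, and on part (ii) it leaves a nontrivial gap open.

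For part (i), the paper works with the restricted operator $L^\ep_{\mid L^1(\Omega)}$ and exploits the power asymptotics $(L^\ep_{\mid L^1(\Omega)})^n/\left(r_\sigma\right)^n\to\Pi$ (Lemma~\ref{Lemma:Prel_2}) applied to the chain of inequalities $A_{\mid\Omega}\leq(L^\ep_{\mid L^1(\Omega)})^n(A_{\mid\Omega})$; the critical case $r_\sigma=1$ is then handled by showing $(I-\Pi)(A_{\mid\Omega})=0$, so $A_{\mid\Omega}$ is a positive eigenvector, and the denominators in $T^\ep$ force $\int\beta_k A=0$. Your adjoint-eigenfunction pairing $\langle L^\ep A-A,\phi_*^\ep\rangle = (r_\sigma-1)\langle A,\phi_*^\ep\rangle$ is a valid and arguably cleaner alternative (it bypasses the power iteration entirely), but to make it rigorous you should restrict to $L^1(\Omega)$ as the paper does, where $L^\ep_{\mid L^1(\Omega)}$ is compact and positivity improving and the Krein--Rutman machinery applies cleanly; on $L^1(\R^N)$ the adjoint lives in $L^\infty(\R^N)$ and the strict-positivity-on-$\Omega$ claim needs the explicit computation $(L^\ep)^*\phi=\frac{\Lambda}{\theta}\Psi\,(m_\ep\star\phi)$. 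Your closing deduction from $\int\beta_k A=0$ or $L_k^\ep(A)=0$ to $A\equiv 0$ on $\Omega_k$ is correct (using $\Omega_k\subset\{\beta_k>0\}$) and matches the paper's conclusion.

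For part (ii), the paper does \emph{not} work with the continuous-time semiflow of the full five-component system \eqref{Eq:Model}. It instead treats the fixed-point map $T^\ep$ itself as a \emph{discrete} dynamical system on $L^1_+(\R^N)$, shows it is compact and point-dissipative (dominated by the compact $L^\ep$, with the explicit bound \eqref{item:pointdiss}, which you rederived correctly), obtains a compact global attractor from \cite[Theorem~2.9]{MagalZhao2005}, and proves weak uniform persistence by the direct comparison $(T^\ep)^{n_0+n}\varphi\geq\left(\frac{\theta}{\theta+\eta}L^\ep\right)^n(T^\ep)^{n_0}\varphi$ with $r_\sigma\left(\frac{\theta}{\theta+\eta}L^\ep\right)=\frac{2r_\sigma(L^\ep)}{1+r_\sigma(L^\ep)}>1$ (Lemma~\ref{Lemma:Prel_3}), then upgrades to strong persistence and existence of an interior fixed point via \cite[Proposition~3.2]{MagalZhao2005}. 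Your plan of going through the PDE semiflow is substantially heavier: it requires well-posedness and positivity of the semiflow in a product space, asymptotic smoothness for the $(S_k,I_k,A)$ system (the $I_k$-components do not smooth, so one must argue exponential decay toward a compact family), the continuous-time attractor and persistence theorems, and then a reduction back to $A$. More importantly, the step you flag as the ``main obstacle'' --- converting linear instability of the DFE into a quantitative persistence statement --- is precisely the content of Lemma~\ref{Lemma:Prel_3} and the Magal--Zhao weak-to-strong upgrade, and your proposal does not actually fill it in. The paper's formulation makes this step elementary at the level of $T^\ep$ because the inequality $T^\ep(\varphi)\geq\frac{\theta}{\theta+\eta}L^\ep(\varphi)$ (when $\int\beta_k\varphi\leq\eta$) together with Lemma~\ref{Lemma:Prel_2} gives blow-up of iterates, contradicting dissipativity; this is the mechanism you were looking for.
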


The proof of the above Theorem involves the theory of global attractors  applied to  the discrete dynamical system generated by $T^\ep$. Note that the operator $L^\ep$ is the Fréchet derivative of $T^\ep$ (see \eqref{T}) at $A \equiv 0$. The position of the spectral radius $r_\sigma(L^\ep)$ with respect to $1$ describes the stability and instability of the extinction state $A\equiv 0$ for the aforementioned dynamical system.

In our next result we consider  the situation where $r_\sigma(L^\ep)>1$ and investigate the shape of the nontrivial and nonnegative solutions of the fixed point problem \eqref{Eq:AepsIntro} for $\ep\ll 1$. Observe that the threshold $r_\sigma(L^\ep)$ converges to a limit when $\ep\to0$
\begin{equation}\label{R0}
\lim_{\ep\to 0} r_\sigma(L^\ep)=R_0:=\frac{\Lambda}{\theta}\|\xi_1\Psi_1+\xi_2\Psi_2\|_{L^\infty}.
\end{equation}
{A rigorous proof of this limit can be found in Lemma \ref{Lemma:Prel_1} item \ref{item:symmetric-spectrum}.} In addition to Assumption \ref{Assump:Psi},  we  introduce further conditions on the functions $\beta_k$ and on the decay rate of the mutation kernel $m$. 
\begin{assumption}\label{Assump:supports}
We assume that the mutation kernel satisfies, for all $n\in\mathbb N$,  
\begin{equation*}
		\lim_{\|x\|\to \infty} \Vert x\Vert^n m(x)=0.
\end{equation*}
In other words, $m$ satisfies $m(x)=o\left(\frac{1}{\|x\|^\infty}\right)$ as $\|x\|\to \infty$.

Furthermore, we assume that functions $\beta_1$ and $\beta_2$ have compact supports, separated  in the sense 
\begin{equation}\label{support}
{\rm dist}\,\left(\Sigma_1,\Sigma_2\right)>0\text{ with }\Sigma_k=\overline{\{x\in\R^N,\;\beta_k(x)>0\}},\;k=1,2,
\end{equation}
where ${\dist}$ is the usual distance between sets in $\R^N$
	\begin{equation*}
		\dist(\Sigma_1, \Sigma_2):=\inf_{x\in\Sigma_1}\inf_{y\in\Sigma_2}\|y-x\|.
	\end{equation*}
\end{assumption}
This second assumption  will allow us to  reduce the study of the fixed points of $T^\ep$ to the two simpler fixed point problems associated with $T_k^\ep$ (defined in \eqref{eq:Tk}) weakly coupled when $\ep\ll 1$. {From \eqref{support}, we observe that the pathogen can infect the host $k\in\{1,2\}$ only if its phenotypic value belongs to $\Sigma_k$. Consequently, the latter assumption implies that the first host is immunized to the pathogen with phenotypic values within $\Sigma_2$, and conversely.}

Our last assumption concerns the spectral gap of the  bounded linear operators $L_k^\ep$ (see \eqref{Eq:Lk_ep}). 
Let us recall that for each $\ep>0$ and $k=1,2$, the spectrum $\sigma\left(L_k^\ep\right)$ of $L_k^\ep$ is composed of isolated eigenvalues (except 0) with finite algebraic multiplicities, among which $r_\sigma(L_k^\ep)$ is a simple eigenvalue. Moreover,
\begin{equation}\label{Eq:R0,k}
\lim_{\ep\to 0}r_\sigma(L_k^\ep)=R_{0,k}:=\dfrac{\xi_k\Lambda }{\theta}\|\Psi_k\|_{L^\infty},\;k=1,2.
\end{equation}
We refer to Appendix \ref{Appendix-spectral} for a precise statement of those spectral properties. Recalling the definition of $R_0$ in \eqref{R0}, observe that, due to Assumption \ref{Assump:supports}, we have
$$
R_0=\max\{R_{0,1}, R_{0,2}\}.
$$
Next for $k=1,2$ we denote by $\lambda^{\ep,1}_k>\lambda^{\ep,2}_k$ the first and the second eigenvalues of the linear operator $L^\ep_k$ and we assume that the spectral gaps are not too small, namely 
\begin{assumption}[Spectral gap]\label{Assump:gap}	
We assume that for each $k=1,2$  there exists $n_k\in \N$ such that
$$\liminf_{\ep\to 0} \dfrac{\lambda^{\ep,1}_k-\lambda^{\ep,2}_k}{\ep^{n_k}}>0.$$
\end{assumption}  
Note that the above assumption is satisfied for rather general functions $\Psi_k$. An asymptotic expansion of the first eigenvalues of the operators $L_k^\ep$ has been obtained in \cite{Djidjou2017} when the mutation kernel has a fast decay at infinity and when  $\Psi_k$ are smooth functions. In that case, the asymptotic expansions for the first eigenvalues involve the derivative of the fitness functions $\Psi_k$ at their maximum. Roughly speaking, for each $k=1,2$, Assumption \ref{Assump:gap} is satisfied when each -- partial -- fitness function  $\Psi_k$ achieves its global maximum at a finite number of optimal traits, and its behaviour around any two optimal traits differs  by some derivative. Assumption \ref{Assump:gap} allows us to include the situation studied in \cite{Djidjou2017} in a more general framework. A similar abstract assumption has been used in \cite{Burie2018, Burie2018b} to derive refined information on the asymptotic and the transient behaviour of the solutions to \eqref{Eq:Model} in the context of a single host population.   

The single host problem
\begin{equation}\label{uncoupled}
T_k^\ep\left(A_k^\ep\right)=A_k^\ep 
\end{equation}  
has been extensively studied in Djidjou \textit{et al.} \cite{Djidjou2017}. 
In particular it has been shown that,  when $R_{0,k}>1$,  this equation admits a unique positive solution $A^{\ep, *}_k\in L^1_+(\R^N)$ as soon as $\ep$ is sufficiently small. 
Our next result shows that any nontrivial solution of \eqref{Eq:AepsIntro} is close to the superposition of the  solutions to the two uncoupled problems \eqref{uncoupled} for $k=1,2$, when  $\ep\ll 1$. 
\begin{theorem}[Asymptotic shape of the solutions of \eqref{Eq:L_ep}]\label{Thm:shape}
Let Assumptions \ref{Assump:Psi}, \ref{Assump:supports} and \ref{Assump:gap} be satisfied and assume further that $R_0>1$. Let $A^\ep\in L^1_+(\R^N)\cap L^\infty(\R^N)$ be a nontrivial solution of \eqref{Eq:AepsIntro}. Then the following estimate holds for $\ep\ll 1$:
	\begin{equation*}
		\Vert A^\ep-(A^{\ep, *}_1+A^{\ep, *}_2)\Vert_{L^1(\mathbb R^N)}=o(\ep^\infty), 
	\end{equation*}
	where, for $k=1,2$, $A^{\ep, *}_k\in L^1(\mathbb R^N)$ is the unique positive fixed-point of $T^\ep_k$ if $R_{0,k}>1$ and $A^{\ep, *}_k\equiv 0$ otherwise.
\end{theorem}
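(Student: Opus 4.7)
The natural starting point is to decompose the fixed-point equation $A^\ep = T^\ep(A^\ep) = T_1^\ep(A^\ep) + T_2^\ep(A^\ep)$ by setting
\[
B_k^\ep := T_k^\ep(A^\ep), \qquad k=1,2,
\]
so that $A^\ep = B_1^\ep + B_2^\ep$. The target estimate then reduces to proving $\|B_k^\ep - A_k^{\ep,*}\|_{L^1(\R^N)} = o(\ep^\infty)$ for each $k$. The plan is to show that each $B_k^\ep$ is an approximate fixed point of $T_k^\ep$ with an $o(\ep^\infty)$ residual in $L^1$, and to conclude via a local stability argument around $A_k^{\ep,*}$.

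The first key step is the decoupling of $B_1^\ep$ and $B_2^\ep$. Since $\Psi_k$ and $\beta_k$ both vanish outside $\Sigma_k$ and $B_k^\ep$ is proportional to $m_\ep \star(\Psi_k A^\ep)$, Assumption \ref{Assump:supports} (positive distance $d := \dist(\Sigma_1,\Sigma_2) > 0$ combined with the super-polynomial decay of $m$) yields, for every $x$ at distance at least $d$ from $\Sigma_k$,
\[
B_k^\ep(x) \leq C\,\ep^{-N}\Bigl(\sup_{\|z\|\geq d/\ep}m(z)\Bigr)\|A^\ep\|_{L^1(\R^N)} = o(\ep^\infty),
\]
uniformly in $x$, using the $\ep$-uniform $L^1$ bound on $A^\ep$ from Theorem \ref{Thm:main}. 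In particular, for $j \neq k$, the cross-integrals $\int_{\R^N}\beta_j B_k^\ep \dd x$ and the $L^1$-norm of $L_j^\ep(B_k^\ep)$ are both $o(\ep^\infty)$, since $\beta_j$ and $\Psi_j$ are supported in $\Sigma_j$.

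Next, starting from $B_k^\ep = T_k^\ep(B_k^\ep + B_j^\ep)$ (with $j\neq k$) and exploiting the linearity of $L_k^\ep$, one expands the numerator and denominator of $T_k^\ep(B_k^\ep+B_j^\ep)$ to isolate the contributions involving $B_j^\ep$, which are precisely the cross-terms just shown to be $o(\ep^\infty)$. Since the denominators are bounded below by $1$ and $L_k^\ep(B_k^\ep)$ is uniformly bounded in $L^1$ through $\|B_k^\ep\|_{L^1(\R^N)} \leq \|A^\ep\|_{L^1(\R^N)}$, this gives
\[
\|B_k^\ep - T_k^\ep(B_k^\ep)\|_{L^1(\R^N)} = o(\ep^\infty).
\]

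Finally, to close the argument I would invoke the local stability of the fixed points of $T_k^\ep$. When $R_{0,k} > 1$, \cite{Djidjou2017} provides the unique positive fixed point $A_k^{\ep,*}$, and Assumption \ref{Assump:gap} quantifies the spectral gap of the Fréchet derivative $D T_k^\ep(A_k^{\ep,*})$, so that $I - D T_k^\ep(A_k^{\ep,*})$ is invertible in $L^1$ with norm growing at worst polynomially in $\ep^{-1}$; a linearization and bootstrap argument then upgrades $B_k^\ep = T_k^\ep(B_k^\ep) + o(\ep^\infty)$ to $\|B_k^\ep - A_k^{\ep,*}\|_{L^1(\R^N)} = o(\ep^\infty)$. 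When $R_{0,k} \leq 1$, the spectral radius $r_\sigma(L_k^\ep)$ remains below $1$ for $\ep$ small, so $T_k^\ep$ is a contraction in a neighborhood of $0$ and the same bootstrap forces $B_k^\ep = o(\ep^\infty)$ in $L^1$. I expect this last stability step to be the main obstacle: the $o(\ep^\infty)$ rate must survive the inversion of $I - D T_k^\ep(A_k^{\ep,*})$, whose norm can in principle blow up as $\ep\to 0$, and it is precisely Assumption \ref{Assump:gap} that tames this blow-up to a controlled polynomial rate.
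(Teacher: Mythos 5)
Your outline departs genuinely from the paper's. The decoupling steps (your cross-term estimates, showing $B_k^\ep$ is $o(\ep^\infty)$ away from $\Sigma_k$, and deducing $\Vert B_k^\ep - T_k^\ep(B_k^\ep)\Vert_{L^1}=o(\ep^\infty)$) are correct and essentially reproduce what the paper calls Lemma~\ref{Lemma:Decay}. The paper, however, does \emph{not} then argue via local stability of the nonlinear map $T_k^\ep$. It instead converts the problem into a spectral one for the self-adjoint realization $S_k^\ep = \Theta_k\, m_\ep\!\star(\Theta_k\,\cdot)$ on $L^2(\Omega_k)$: Lemma~\ref{LE1} gives $r_\sigma(L_k^\ep)\le \mu_k^\ep/\theta$, Lemma~\ref{LE2} shows $\dist(\mu_k^\ep/\theta,\sigma(S_k^\ep))=o(\ep^\infty)$ using the exact resolvent bound $\|(\zeta I - S_k^\ep)^{-1}\| = 1/\dist(\zeta,\sigma(S_k^\ep))$ valid because $S_k^\ep$ is self-adjoint, Corollary~\ref{CO1} pins $\mu_k^\ep/\theta$ to the \emph{top} eigenvalue via the spectral gap and the one-sided bound, and Lemma~\ref{LE3} applies the contour formula for spectral projectors to conclude that $A^\ep|_{\Omega_k}$ is $o(\ep^\infty)$-close to $\nu_k^\ep\phi_k^{\ep,1}$.

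The gap in your final step is real and is precisely the heart of the theorem. To run the bootstrap $(I - D_{A_k^{\ep,*}}T_k^\ep)\,u^\ep = Q(u^\ep) + r^\ep$ with $u^\ep := B_k^\ep - A_k^{\ep,*}$, you need the inverse of $I - D_{A_k^{\ep,*}}T_k^\ep$ in $L^1$ with an at-most-polynomial operator norm, \emph{and} you need to know a priori that $\|u^\ep\|_{L^1}$ is already small (smaller than the reciprocal of that norm, times the Lipschitz constant of the quadratic part) to make the fixed-point iteration contractive. Neither is available from what you have written. The operator $D_{A_k^{\ep,*}}T_k^\ep$ is a rank-one perturbation of $\lambda_k^{\ep,1}{}^{-1}L_k^\ep$ and is not self-adjoint in $L^1$, so a spectral gap (Assumption~\ref{Assump:gap} concerns $L_k^\ep$, not the linearized $T_k^\ep$) does not by itself give a resolvent bound; the paper sidesteps this entirely by working with the genuinely self-adjoint $S_k^\ep$. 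More fundamentally, the a priori closeness of $B_k^\ep$ to $A_k^{\ep,*}$ is exactly what must be proved: with only $\Vert B_k^\ep - T_k^\ep(B_k^\ep)\Vert_{L^1}=o(\ep^\infty)$ and a uniform $L^1$ bound, $B_k^\ep$ could a priori be near $0$ or near some other near-fixed configuration, and the linearization around $A_k^{\ep,*}$ would never start. The paper's route through Lemma~\ref{LE1}, Lemma~\ref{LE2}, Corollary~\ref{CO1} does the localization work: Lemma~\ref{Lemma:Prel_3}(3) gives a uniform lower bound $\int\beta_k A^\ep \ge \frac{\theta}{2}(r_\sigma(L_k^\ep)-1)$ which rules out degeneration to $0$, and the distance-to-spectrum argument forces $\mu_k^\ep/\theta$ onto $\lambda_k^{\ep,1}$ rather than another eigenvalue. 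Finally, your treatment of $R_{0,k}\le 1$ by ``$T_k^\ep$ is a contraction near $0$'' is too quick: $r_\sigma(L_k^\ep)<1$ does not imply the $L^1$ operator norm of $L_k^\ep$ is below $1$, and in the borderline case $R_{0,k}=1$ the spectral radius degenerates to $1$ as $\ep\to 0$; the paper needs the separate quadratic estimate on $\left(\int\beta_2 A^\ep\right)^2$ to close this case.
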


\begin{remark} As will  be shown in Lemma \ref{Lemma:Decay}, it should be noted that $\Vert A_1^{\varepsilon, *}\Vert_{L^1(\Sigma_2)}=o(\varepsilon^\infty)$ and, similarly, $\Vert A_2^{\varepsilon, *}\Vert_{L^1(\Sigma_1)}=o(\varepsilon^\infty)$. Therefore, the following result holds as well
	\begin{gather*}
		\begin{aligned}
			\Vert A^\ep-A^{\ep, *}_1\Vert_{L^1(\Sigma_1)}&=o(\varepsilon^\infty), & \Vert A^\ep-A^{\ep, *}_2\Vert_{L^1(\Sigma_2)}&=o(\ep^\infty), 
		\end{aligned} \\
		\Vert A^\ep - (A^{\ep, *}_1+A^{\ep, *}_2)\Vert_{L^1(\mathbb R^N\backslash (\Sigma_1\cup\Sigma_2))}=o(\varepsilon^\infty).
	\end{gather*}
\end{remark} 


\begin{figure}[!h]
\begin{center} 
\includegraphics[width=\linewidth]{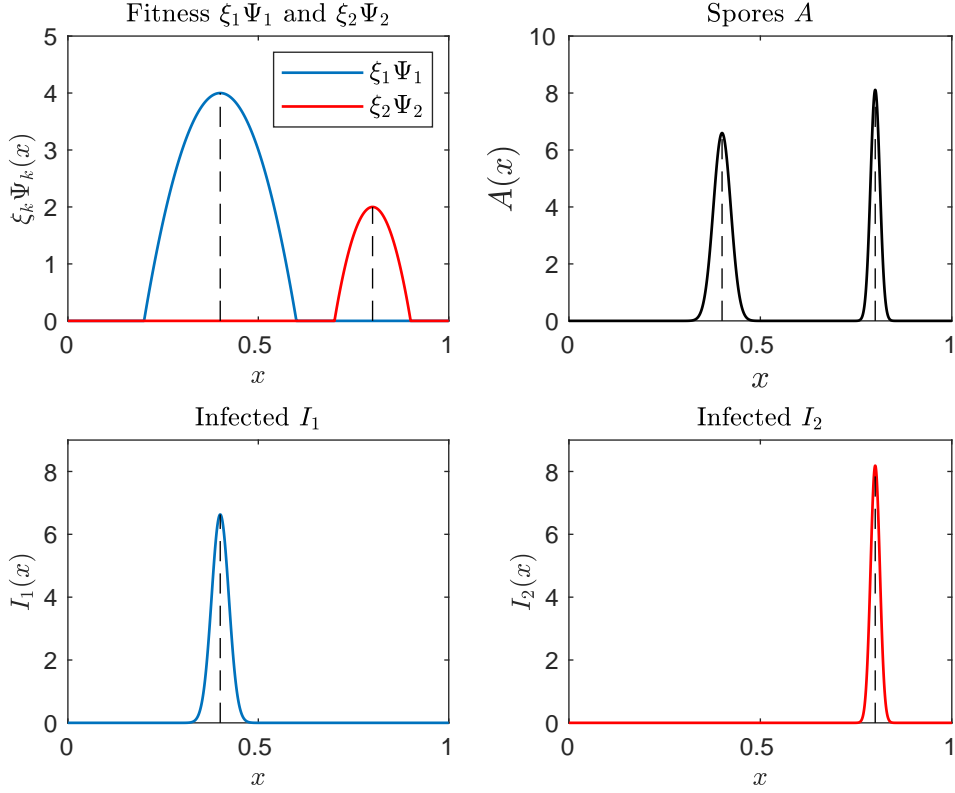}   
\end{center}
\caption{Fitness functions and endemic equilibrium in the case $R_{0,1}>1$ and $R_{0,2}>1$. {Parameters for this simulation are $\xi_1=\xi_2=1/2$, $\Lambda=\theta=\delta=1$, $d_1 \equiv d_2 \equiv 0$, $r_1 \equiv  r_2\equiv 1$, $\beta_1(x)=200((x-0.2)(0.6-x))^+$ and $\beta_2(x)=400((x-0.7)(0.9-x))^+$ where $(\cdot)^+$ denotes the positive part, kernel function is $m(x)=\frac{1}{2}e^ {-|x|}$ and $\varepsilon=10^ {-3}$. } 
}\label{Fig1}
\end{figure}

\begin{figure}[!h]
\begin{center} 
\includegraphics[width=\linewidth]{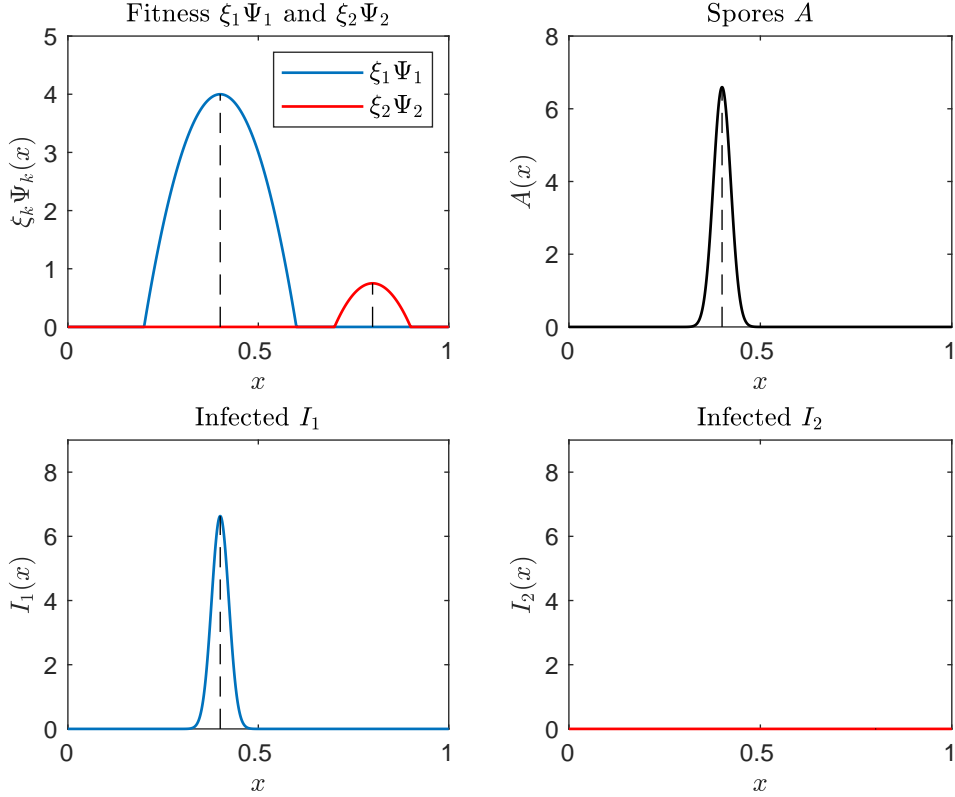}   
\end{center}
\caption{Fitness functions and endemic equilibrium in the case $R_{0,1}>1$ and $R_{0,2}<1$.  {Parameters for this simulation are the same as those of Figure~\ref{Fig1} except for $\beta_2$ which is $\beta_2(x)=150((x-0.7)(0.9-x))^+$ here.} }\label{Fig2}
\end{figure}

In particular, Theorem \ref{Thm:shape} ensures a concentration property for the nontrivial fixed point solutions of \eqref{Eq:AepsIntro} and thus for the endemic solutions of \eqref{Eq:Model} as $\ep \to 0$ (see Figures \ref{Fig1} and \ref{Fig2}). It shows that each infectious population $I_k$ concentrates around phenotypic values maximising $\Psi_k$ if $R_{0,k}>1$ or goes to $0$ a.e. if $R_{0,k}\leq 1$. As a special case, when each $\Psi_k$ achieves its maximum at a single point $x_k\in\Sigma_k$, a slightly more precise result can be stated.

\begin{corollary}[Concentration property of the endemic equilibrium points]\label{Thm:concentration}
Assume that each fitness function $\Psi_k$ admits a unique maximum at $x=x_k$ and that $R_{0,k}>1$ for all $k=1,2$, that is
$$
R_{0,k}=\dfrac{\xi_k\Lambda}{\theta}\Psi_k(x_k)>1,\;\forall k=1,2.
$$
For $\ep\ll 1$, denote by $(S_1^\ep,S_2^\ep, I_1^\ep, I_2^\ep,A^\ep)$ any endemic equilibrium point of \eqref{Eq:Model}. Then,  as $\ep\to0$, the following behaviour holds
$$
	\lim_{\ep\to 0} S_k^\ep=\dfrac{1}{\Psi_k(x_k)} 
$$
and for any function $f$ continuous and bounded on $\R^N$, we have
$$
	\lim_{\ep\to 0} \int_{\R^N} f(x)I_k^\ep(x) \dd x=\dfrac{R_{0,k}-1}{\Psi_k(x_k)\left(1+\frac{d_k(x_k)}{\theta}\right)}  f(x_k)
$$
and
$$
	\lim_{\ep\to 0} \int_{\R^N} f(x)A^\ep(x) \dd x= \frac{\theta}{\beta_1(x_1)}(R_{0,1}-1)f(x_1) + \frac{\theta}{\beta_2(x_2)}(R_{0,2}-1)f(x_2).
$$
\end{corollary}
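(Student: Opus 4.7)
The plan is to deduce the corollary from Theorem \ref{Thm:shape} together with the single-host concentration analysis of \cite{Djidjou2017}. Since $R_{0,k}>1$ for $k=1,2$, Theorem \ref{Thm:shape} gives
$$
A^\varepsilon = A_1^{\varepsilon,*}+A_2^{\varepsilon,*}+o(\varepsilon^\infty)\quad\text{in }L^1(\R^N),
$$
so it suffices to understand the weak-$*$ limit of each $A_k^{\varepsilon,*}$. Under the single-maximum assumption, the single-host results of \cite{Djidjou2017} ensure that $A_k^{\varepsilon,*}$ concentrates as $\varepsilon\to 0$ on the unique maximiser $x_k$ of $\Psi_k$, so that $A_k^{\varepsilon,*}\rightharpoonup \alpha_k\,\delta_{x_k}$ in the weak-$*$ topology of bounded Radon measures for some mass $\alpha_k\geq 0$ to be identified. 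Tightness of $\{A_k^{\varepsilon,*}\}_{\varepsilon>0}$ follows from the uniform $L^1$ bound in Theorem \ref{Thm:main}~(ii) and the super-polynomial decay of $m$ coming from Assumption \ref{Assump:supports} (see the remark after Theorem \ref{Thm:shape}).

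To compute $\alpha_k$, I would integrate the single-host fixed-point identity $A_k^{\varepsilon,*}=T_k^\varepsilon[A_k^{\varepsilon,*}]$ over $\R^N$ and use $\int m_\varepsilon=1$, which yields
$$
\|A_k^{\varepsilon,*}\|_{L^1}=\frac{\xi_k\Lambda/\theta}{1+\theta^{-1}\int_{\R^N}\beta_k A_k^{\varepsilon,*}}\int_{\R^N}\Psi_k A_k^{\varepsilon,*}.
$$
Passing to the limit via $A_k^{\varepsilon,*}\rightharpoonup \alpha_k\delta_{x_k}$ and using the continuity and boundedness of $\beta_k,\Psi_k$ gives $\int \beta_k A_k^{\varepsilon,*}\to\beta_k(x_k)\alpha_k$ and $\int \Psi_k A_k^{\varepsilon,*}\to \Psi_k(x_k)\alpha_k$. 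Since $\alpha_k>0$ (nontriviality of $A_k^{\varepsilon,*}$ when $R_{0,k}>1$), dividing by $\alpha_k$ yields $1+\theta^{-1}\beta_k(x_k)\alpha_k=\xi_k\Lambda\Psi_k(x_k)/\theta=R_{0,k}$, hence
$$
\alpha_k=\frac{\theta(R_{0,k}-1)}{\beta_k(x_k)}.
$$

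Next, I would plug this into the stationary relations. By Assumption \ref{Assump:supports} and the remark after Theorem \ref{Thm:shape}, $\|A_j^{\varepsilon,*}\|_{L^1(\Sigma_k)}=o(\varepsilon^\infty)$ for $j\neq k$, so $\int \beta_k A^\varepsilon \to \beta_k(x_k)\alpha_k=\theta(R_{0,k}-1)$, and therefore
$$
S_k^\varepsilon=\frac{\xi_k\Lambda}{\theta+\int \beta_k A^\varepsilon}\longrightarrow \frac{\xi_k\Lambda}{\theta R_{0,k}}=\frac{1}{\Psi_k(x_k)}.
$$
The formula for $\int f I_k^\varepsilon$ follows from $I_k^\varepsilon=\frac{\beta_k}{\theta+d_k}S_k^\varepsilon A^\varepsilon$, the limit of $S_k^\varepsilon$, and the concentration of $A^\varepsilon$ at $x_k$ within $\Sigma_k$:
$$
\int f I_k^\varepsilon \longrightarrow \frac{1}{\Psi_k(x_k)}\cdot \frac{\beta_k(x_k)}{\theta+d_k(x_k)}\cdot \alpha_k f(x_k)=\frac{R_{0,k}-1}{\Psi_k(x_k)(1+d_k(x_k)/\theta)}f(x_k).
$$
Finally, for any $f\in\Co_b(\R^N)$, $\int f A^\varepsilon\to \alpha_1 f(x_1)+\alpha_2 f(x_2)$ by the weak-$*$ convergence of $A_1^{\varepsilon,*}+A_2^{\varepsilon,*}$ and the $o(\varepsilon^\infty)$ correction from Theorem \ref{Thm:shape}, which yields exactly the third limit.

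The genuinely substantive input has already been supplied by Theorem \ref{Thm:shape} and the single-host concentration of \cite{Djidjou2017}; the only subtlety in the present argument is to ensure the passage to the Dirac limit in the integrals against continuous but possibly unbounded-support test functions, which is why tightness of the family $\{A_k^{\varepsilon,*}\}$ (deriving from Assumption \ref{Assump:supports}) is invoked. Identification of the mass $\alpha_k$ via the integrated fixed-point identity is then an elementary but crucial algebraic step.
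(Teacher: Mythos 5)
The paper does not actually include an explicit proof of Corollary \ref{Thm:concentration}: it is stated right after Theorem \ref{Thm:shape} as ``a slightly more precise result'' and the derivation is left to the reader. Your argument fills that gap, and it is essentially the intended one: combine Theorem \ref{Thm:shape} with the single-host concentration of \cite{Djidjou2017}, identify the Dirac masses $\alpha_k$ from the integrated fixed-point relation, and then read off the limits of $S_k^\varepsilon$, $I_k^\varepsilon$, $A^\varepsilon$ from the stationary relations \eqref{Stationary}. The algebra $1+\theta^{-1}\beta_k(x_k)\alpha_k=R_{0,k}$, hence $\alpha_k=\theta(R_{0,k}-1)/\beta_k(x_k)$, $S_k^\varepsilon\to\xi_k\Lambda/(\theta R_{0,k})=1/\Psi_k(x_k)$, and the resulting limits for $\int fI_k^\varepsilon$ and $\int fA^\varepsilon$ all check out.

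Two minor points deserve a slightly more careful justification than the parenthetical remarks you give. First, the claim $\alpha_k>0$: nontriviality of $A_k^{\varepsilon,*}$ for each fixed $\varepsilon$ does not, on its own, preclude the mass from collapsing to $0$ in the limit. The clean way to rule this out in the spirit of the paper is via the persistence bound of Lemma \ref{Lemma:Prel_3} (item $1$, applied to the fixed point $A_k^{\varepsilon,*}$ of $T_k^\varepsilon$), which gives $\int_{\R^N}\beta_k A_k^{\varepsilon,*}\ge \tfrac{\theta}{2}\bigl(r_\sigma(L_k^\varepsilon)-1\bigr)\to\tfrac{\theta}{2}(R_{0,k}-1)>0$; combined with $\beta_k(x_k)>0$ (forced by $\Psi_k(x_k)>0$ through \eqref{Eq:Psi_k}) this yields $\alpha_k>0$. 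Alternatively, one may simply cite \cite{Djidjou2017} for the \emph{full} single-host concentration (mass included), in which case re-deriving $\alpha_k$ is a consistency check and the issue disappears. Second, your appeal to the $L^1$ bound of Theorem \ref{Thm:main}~(ii) for tightness is slightly misplaced: that bound concerns $A^\varepsilon$ rather than $A_k^{\varepsilon,*}$, and in any case uniform boundedness in $L^1$ does not give tightness. What does is the eigenfunction identity \eqref{ei}: since $\Psi_k$ is compactly supported in $\Sigma_k$ and $m$ decays faster than any polynomial, $\int_{\|x\|\ge R}\phi_k^{\varepsilon,1}\,\dd x$ is uniformly small for $R$ large, exactly as in the proof of Lemma \ref{Lemma:Decay}~(a). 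With those two small patches the proof is complete and correct, and I see no alternative route worth comparing, since the paper itself does not write one out.
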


Numerical explorations suggest that the latter concentration property may fail to hold when Assumption \ref{Assump:supports} does not hold. Indeed, we can find examples where $R_{0,1}>1, R_{0,2}>1$ and where the population of spores does not concentrate to either maximum of $\Psi_1$ or $\Psi_2$. Such an example is shown in  Figure \ref{Fig3}.
{
More precisely, the concentration of the steady states as $\varepsilon$ goes to $0$ in absence of Assumption \ref{Assump:supports} is formally handled in \cite{Djidjou2018}. The authors showed that a $R_0$ optimization principle holds in the case $\beta_1\equiv\beta_2$, $r_1\not\equiv r_2$ (see \cite[Section 4.2]{Djidjou2018}) and that the pathogen population becomes monomorphic (\textit{i.e.} concentrates around the maximum of the fitness function $\Psi_1+\Psi_2$). In the case $r_1\equiv r_2$, $\beta_1\not\equiv \beta_2$, an invasion fitness function shows which phenotypic value may survive as $\varepsilon \to 0$ (see \cite[Appendix C]{Djidjou2018})). Numerically, a Pairwise Invasibility Plot is used to characterize the evolutionary attractors, leading to either monomorphic or dimorphic (\textit{i.e.} the pathogen concentrates around two phenotypic traits) cases, depending on the fitness functions.} 

\begin{figure}[!h]
\begin{center} 
\includegraphics[width=\linewidth]{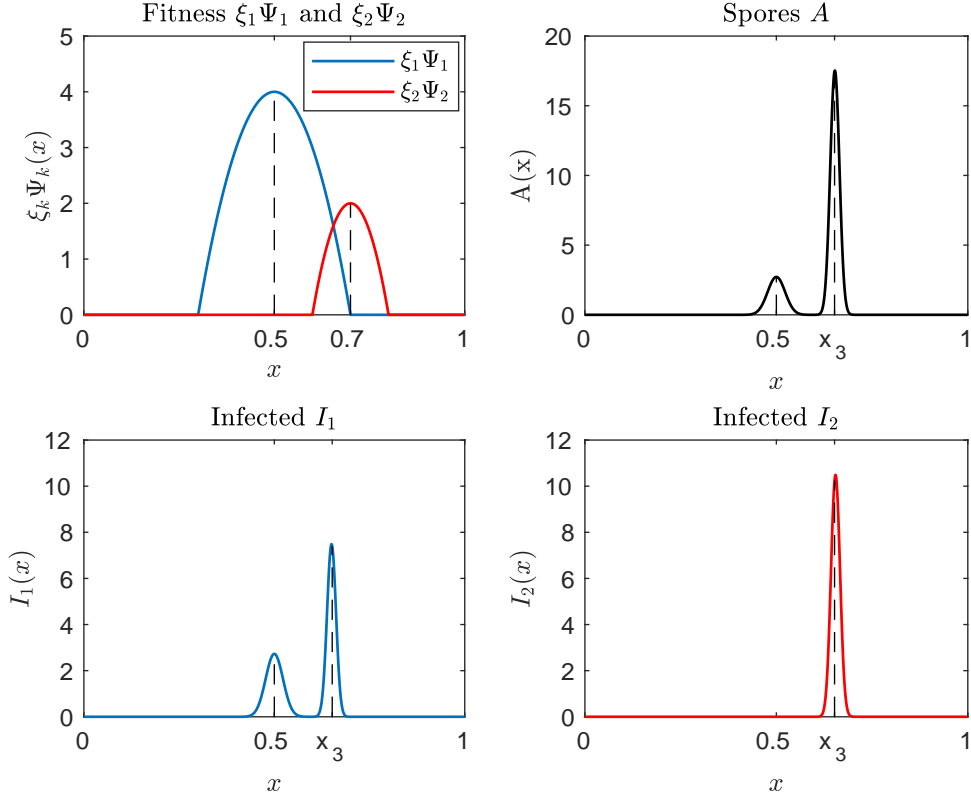}   
\end{center}
\caption{Fitness functions   and endemic equilibrium when Assumption \ref{Assump:supports} does not hold. Though $\Psi_2$ takes its maximum value in $x=0.7$, functions $A$, $I_1$ and $I_2$ concentrate around the trait value $x_3 \simeq 0.652$. {Parameters for this simulation are the same as those of Figure~\ref{Fig1} and ~\ref{Fig2} except for $\beta_1$ and $\beta_2$ which are $\beta_1(x)=200((x-0.3)(0.7-x))^+$  and   $\beta_2(x)=400((x-0.6)(0.8-x))^+$ here.}} \label{Fig3}
\end{figure}

Finally, we are able to prove the uniqueness of the positive equilibrium of \eqref{Eq:Model} given by Theorem \ref{Thm:main}, when $\ep $ is sufficiently small. The case where $\min(R_{0,1}, R_{0,2})=1$ requires an additional assumption on the speed of convergence of the smallest spectral radius as $\varepsilon\to 0$, which is quite natural in our context (it holds for exponentially decaying mutation kernels \cite{Djidjou2017}).
 
\begin{theorem}[Uniqueness of the endemic equilibrium]\label{Thm:uniqueness}
	Let Assumptions \ref{Assump:Psi}, \ref{Assump:supports} and \ref{Assump:gap} be satisfied. Assume moreover that $R_{0,1}>1$ and that  one of the following properties is satisfied: 
	\begin{itemize}
		\item either $R_{0,2}\neq 1$, 
		\item or $R_{0,2}=1 $ and the convergence of $r_\sigma(L_2^\ep)$ towards $R_{0,2}$ is at most polynomial in $\ep$, namely $r_\sigma(L_2^\ep)\leq 1-C\ep ^n$ for some $C>0$, $n>0$.
	\end{itemize}
	Then, for $\ep >0$ sufficiently small, $T^\ep$ has exactly one nonnegative nontrivial fixed point.
\end{theorem}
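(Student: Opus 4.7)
The strategy is to apply Leray--Schauder degree theory to the compact nonlinear operator $T^\ep$ on the positive cone of $L^1(\mathbb{R}^N)$, as announced at the end of Section~\ref{Sec:Main}. By Theorem~\ref{Thm:main}~(ii) the family of nontrivial fixed points is uniformly bounded in $L^1$, hence contained in some ball $B_R$. Using a standard homotopy $(t,A)\mapsto A-tT^\ep(A)$ together with this a priori bound, the Leray--Schauder degree of $\operatorname{Id}-T^\ep$ on $B_R\cap L^1_+(\mathbb{R}^N)$ equals~$1$. The proof then reduces to computing the local fixed-point index at $0$ and at each nontrivial fixed point, and invoking additivity of the degree to count them.

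A key ingredient is the tight localization of nontrivial fixed points given by Theorem~\ref{Thm:shape}: every nontrivial fixed point $A^\ep$ satisfies $\|A^\ep-(A_1^{\ep,*}+A_2^{\ep,*})\|_{L^1}=o(\ep^\infty)$, where $A_k^{\ep,*}$ is the unique positive fixed point of $T_k^\ep$ when $R_{0,k}>1$ (by~\cite{Djidjou2017}) and is identically zero otherwise. Thus, for $\ep$ small, all nontrivial fixed points lie in an arbitrarily small $L^1$-neighborhood $\mathcal{U}^\ep$ of $A_1^{\ep,*}+A_2^{\ep,*}$. At the trivial fixed point, $DT^\ep(0)=L^\ep$ and $r_\sigma(L^\ep)>1$ for $\ep$ small, since $R_{0,1}>1$ forces $R_0>1$ in \eqref{R0}; classical Dancer--Amann cone fixed-point-index theory then yields $\operatorname{index}(T^\ep,0)=0$.

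Computing the index at a nontrivial $A^\ep\in\mathcal{U}^\ep$ is the heart of the argument. Differentiating \eqref{T} gives
\begin{equation*}
DT^\ep(A^\ep)[\psi] = \sum_{k=1,2}\frac{L_k^\ep(\psi)}{1+\theta^{-1}\int_{\mathbb{R}^N}\beta_k A^\ep} \;-\; \sum_{k=1,2}\frac{L_k^\ep(A^\ep)\,\theta^{-1}\int_{\mathbb{R}^N}\beta_k\psi}{\left(1+\theta^{-1}\int_{\mathbb{R}^N}\beta_k A^\ep\right)^2}.
\end{equation*}
Assumption~\ref{Assump:supports} combined with the Remark following Theorem~\ref{Thm:shape} ensures that cross-contributions of the form $\int\beta_k A_{3-k}^{\ep,*}$ are $o(\ep^\infty)$, so that $DT^\ep(A^\ep)$ is a norm-$o(\ep^\infty)$ perturbation of the decoupled operator $DT_1^\ep(A_1^{\ep,*})+DT_2^\ep(A_2^{\ep,*})$. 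For each $k$ with $R_{0,k}>1$, uniqueness of $A_k^{\ep,*}$ established in \cite{Djidjou2017} together with the spectral gap (Assumption~\ref{Assump:gap}) yields a quantitative bound $r_\sigma(DT_k^\ep(A_k^{\ep,*}))\leq 1-c\ep^{n_k}$ for some $c>0$; for $k$ with $R_{0,k}<1$ the corresponding block is close to $L_k^\ep$ whose spectral radius tends to $R_{0,k}<1$. A standard perturbation argument then shows that the spectral radius of $DT^\ep(A^\ep)$ itself stays strictly below $1$, whence $\operatorname{index}(T^\ep,A^\ep)=+1$.

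The main obstacle is the borderline case $R_{0,2}=1$, in which $r_\sigma(L_2^\ep)\to 1$ and the spectral gap for host~$2$ collapses. The hypothesis $r_\sigma(L_2^\ep)\leq 1-C\ep^n$ supplies exactly the quantitative separation of size $\ep^n$ needed to dominate the $o(\ep^\infty)$ cross-coupling in $DT^\ep(A^\ep)$ and to preserve $r_\sigma(DT^\ep(A^\ep))<1$. Once these index computations are established, additivity of the Leray--Schauder degree on $B_R\cap L^1_+(\mathbb{R}^N)$ yields
\begin{equation*}
1 \;=\; \operatorname{index}(T^\ep,0) \;+\; \sum_{A^\ep\text{ nontrivial}}\operatorname{index}(T^\ep,A^\ep) \;=\; \#\bigl\{\text{nontrivial fixed points of }T^\ep\bigr\},
\end{equation*}
proving that $T^\ep$ admits exactly one nontrivial nonnegative fixed point for $\ep$ small.
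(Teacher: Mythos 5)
Your strategy is the same as the paper's at the highest level — a Leray--Schauder degree count combined with the observation that, for $\ep$ small, every nontrivial fixed point is a stable (index $+1$) equilibrium — but the bookkeeping of the degree differs, and several steps that carry most of the technical weight are asserted rather than proved.

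On the bookkeeping: you include the origin in the admissible open set and compute $\operatorname{index}(T^\ep,0)=0$ via Dancer--Amann cone index theory. The paper instead excludes $0$ by constructing, for each small $\ep$, a number $\alpha(\ep)>0$ such that every nontrivial fixed point (of $T^\ep$ and of a homotopy $T^{\ep,\tau}$ decoupling the two hosts) satisfies $A>\alpha$ on $\Sigma$, and then computes $\deg(I-T^\ep,K_\alpha)=1$ on $K_\alpha=\{A:\,A>\alpha\text{ on }\Sigma\}$ (Lemma~\ref{lem:LSD}). Both routes can work; but yours is carried out in $L^1_+(\R^N)$, a cone with empty interior, whereas the paper deliberately passes to $\Co(\Sigma_1)\times\Co(\Sigma_2)$ precisely ``in order to work in a solid cone of a Banach space.'' If you stay in $L^1$, the Dancer index-at-$0$ result and the product formula for the degree of the decoupled operator both need justification adapted to a non-solid cone; as written, the claim is unsupported.

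The more serious gaps are in the index computation at nontrivial fixed points. You assert $r_\sigma(DT_k^\ep(A_k^{\ep,*}))\le 1-c\ep^{n_k}$ ``by uniqueness plus the spectral gap,'' but this does not follow without the explicit description of the spectrum: the content of Lemma~\ref{Lemma:Spec}, Step~1, is that
\begin{equation*}
\sigma\bigl(D_{A_k^{\ep,*}}T_k^\ep\bigr)=\{0\}\cup\Bigl\{\tfrac{\lambda_k^{\ep,n}}{\lambda_k^{\ep,1}}\Bigr\}_{n\ge 2}\cup\Bigl\{\tfrac{1}{\lambda_k^{\ep,1}}\Bigr\},
\end{equation*}
obtained via the $L^2_{\Psi_k}$-selfadjoint realization and the eigenbasis $\{\phi_k^{\ep,n}\}$; only from this do the two pieces $\lambda_k^{\ep,2}/\lambda_k^{\ep,1}\le 1-c\ep^{n_k}$ (Assumption~\ref{Assump:gap}) and $1/\lambda_k^{\ep,1}\to 1/R_{0,k}<1$ give the bound. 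You also describe the passage from the decoupled linearization to $DT^\ep(A^\ep)$ as an ``$o(\ep^\infty)$ operator-norm perturbation,'' but a generic perturbation argument is insufficient here: one must show that the perturbation, measured against the polynomial-in-$\ep$ spectral gap, leaves the spectrum inside $(-1,1)$. In the paper this is done by testing a putative eigenvector against the eigenbasis and invoking the resolvent estimate (Lemma~\ref{Lemma:Spec}, Steps 2--3); the borderline case $R_{0,2}=1$ is exactly where the extra polynomial bound $r_\sigma(L_2^\ep)\le 1-C\ep^n$ is used to keep a quantitative separation. Your sketch points at the right objects but does not supply the argument.

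Finally, you write the degree as a sum of indices over nontrivial fixed points without first establishing that the fixed-point set is finite. Additivity of the degree requires the fixed points to be isolated and finitely many; the paper proves this in Lemma~\ref{Lemma:Nbr_Eq}, combining compactness of $T^\ep$ with the fact that $1\notin\sigma(D_A T^\ep)$ for every fixed point $A$ (which is itself a consequence of Lemma~\ref{Lemma:Spec}). This step is missing from your proposal and must be supplied for the additivity argument to make sense.
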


Our proof is based on a computation of the Leray-Schauder degree in the positive cone of $\Co\left(\Sigma_1\right)\times \Co\left(\Sigma_2\right)$. The use of the Leray-Schauder degree is usually restricted to derive the existence of solutions to nonlinear problems, or to provide lower bounds on the number of solutions; here, we are  able to derive the {\it uniqueness} of solution. Indeed, for $\ep>0$, we show that any equilibrium is {\it stable}, the topological degree provides a way to count the exact number of positive equilibria for the equation, and show uniqueness. Occurrences of such an argument in the literature are scarce but include \cite{Duc-Mad-13} and more recently \cite{LiWuDong2019}.

\section{Proof of Theorem \ref{Thm:main}} \label{Sec:Thm1}

This section is devoted to the proof of Theorem \ref{Thm:main}. To do so, we investigate some dynamical properties of the nonlinear operator $T^\ep$ defined in \eqref{T}. The existence of a nontrivial fixed point follows from the theory of global attractors while the non-existence follows from comparison arguments. Throughout this section we fix $\ep>0$. Set $\Psi=\xi_1\Psi_1+\xi_2\Psi_2$, $\Omega\subset \R^N$ the open set given by
$$
\Omega:=\{x\in\R^N:\;\Psi(x)>0\},
$$
and let us denote by $\chi_A$ the characteristic function of a set $A$.

We split this section into two parts. Section \ref{nonexistence} is devoted to the proof of Theorem \ref{Thm:main} $(i)$,  namely the non-existence of a nontrivial fixed point when $r_\sigma(L^\ep)\leq 1$. In Section \ref{existence} we prove the existence of a nontrivial solution when $r_\sigma(L^\ep)>1$. 

\subsection{Proof of Theorem \ref{Thm:main} $(i)$}\label{nonexistence}

Recall that $\ep>0$ is fixed. To prove the first part of the theorem, we suppose that $r_\sigma(L^\ep)\leq 1$ and denote by ${L^\ep}_{\mid L^1(\Omega)}$ the operator defined for every $\varphi\in L^1(\Omega)$ by:
\begin{equation*}
L^\ep_{\mid L^1(\Omega)}(\varphi)(x)=  
L^\ep \tilde{\varphi}(x), \text{ a.e. } x\in \Omega;  
\end{equation*}
where $\tilde{\varphi}\in L^1(\R^N)$ is defined by
\begin{equation*}
\tilde{\varphi}(x)=\begin{cases}
\varphi(x) &\text{ if } x\in \Omega; \\
0 &\text{ if } x\in \R^N\setminus \Omega.
\end{cases}
\end{equation*}
Lemma \ref{Lemma:Prel_1} then applies and ensures that the operator ${L^\ep}_{\mid L^1(\Omega)}\in \L(L^1(\Omega))$ is positivity improving, compact, has a positive spectral radius and satisfies
$$r_\sigma\left({L^\ep}_{\mid L^1(\Omega)}\right)= r_\sigma(L^\ep).$$
Next using Lemma \ref{Lemma:Prel_2} (1), we have
\begin{equation}\label{Eq:Lim_Leps}
\lim_{n\to \infty}\left\|\dfrac{({L^\ep}_{\mid L^1(\Omega)})^n(\varphi)}{\left(r_\sigma\left({L^\ep}_{\mid L^1(\Omega)}\right)\right)^n}-\Pi(\varphi)\right\|_{L^1(\Omega)}=0,\;\forall \varphi\in L^1(\Omega),
\end{equation}
where $\Pi$ denotes the spectral projection associated to ${L^\ep}_{\mid L^1(\Omega)}$ onto
$$\Ker \left(I-\dfrac{({L^\ep}_{\mid L^1(\Omega)})}{\left(r_\sigma\left({L^\ep}_{\mid L^1(\Omega)}\right)\right)}\right).$$
Let $A\in L^1_+(\R^N)$ be a fixed point of $T^\ep$. To prove Theorem \ref{Thm:main} $(i)$, let us show that $A\equiv 0$. 
To that aim note that we have
\begin{equation}\label{Eq:Fix_Ineq}
	{A}_{\mid \Omega}=\chi _{\Omega}(T^\ep)^n({A})\leq ({L^\ep}_{\mid L^1(\Omega)})^n({{A}_{\mid \Omega}}),\;\forall n\geq 0.
\end{equation}

Now let us observe that, under the stronger assumption that   $r_\sigma(L^\ep)<1$, then Lemma \ref{Lemma:Prel_2} applies and shows 
\begin{equation*}
\lim_{n\to \infty}\left\|\left({L^\ep}_{\mid L^1(\Omega)}\right)^n(A)\right\|_{L^1(\Omega)}=0.
\end{equation*}
Hence $\|{A}\|_{L^1(\Omega)}=0$ and therefore $A=T^\ep(A)=0$ a.e. in $\R^N$.
This completes the proof of the result when $r_\sigma(L^\ep)<1$.\medskip

We now consider the limit case $r_\sigma(L^\ep)=1$. To handle this case let us recall that $\Pi \left(A_{\mid \Omega}\right)\in \Ker \left(I-({L^\ep}_{\mid L^1(\Omega)})\right)$. This allows us to decompose and estimate \eqref{Eq:Fix_Ineq} as follows:
\begin{flalign}\label{Eq:Ineq_P0}
	\Pi({A}_{\mid \Omega})+(I-\Pi)({A}_{\mid \Omega})&=\chi _{\Omega} (T^\ep)^n({A})\nonumber \\
&\leq ({L^\ep}_{\mid L^1(\Omega)})^n(\Pi({A}_{\mid \Omega})+(I-\Pi)({A}_{\mid \Omega})) \\
\nonumber &\leq \Pi({A}_{\mid \Omega})+({L^\ep}_{\mid L^1(\Omega)})^n(I-\Pi)({A}_{\mid \Omega}),
\end{flalign}
for every $n\geq 0$. This leads to 
\begin{equation}\label{contraction}
(I-\Pi)({A}_{\mid \Omega})\leq ({L^\ep}_{\mid L^1(\Omega)})^n(I-\Pi)({A}_{\mid \Omega}),\;\forall n\geq 1.
\end{equation}
{
We now denote $\varphi:=(I-\Pi)(A_{\mid \Omega})\in L^1(\Omega)$. Since $\Pi(\varphi)=0$, it follows from \eqref{contraction} that
$$\|\varphi\|_{L^1(\Omega)}\leq \|(L^\ep_{\mid L^1(\Omega)})^n(\varphi)-\Pi(\varphi)\|_{L^1(\Omega)}, \;\forall n\geq 1.$$
Letting $n\to \infty$ and using \eqref{Eq:Lim_Leps}, we get:
$$
\|(I-\Pi)(A_{\mid \Omega})\|_{L^1(\Omega)}=\|\varphi\|_{L^1(\Omega)}=0.$$
Therefore we have $A_{|\Omega}=\Pi A_{|\Omega}$ and $A_{|\Omega} $ is an eigenvector of $L^\varepsilon_{L^1(\Omega)}$ associated with the eigenvalue $r_\sigma(L^\varepsilon_{|L^1(\Omega)})=1 $, that is $L^\varepsilon_{|L^1(\Omega)} A_{|\Omega} = A_{|\Omega}$.} Recalling \eqref{Eq:Ineq_P0} this yields
$${A}_{\mid \Omega}=\chi_{\Omega}T^\ep({A})=({L^\ep}_{\mid L^1(\Omega)})({A}_{\mid \Omega}),$$
and this ensures that
$$\int_{\R^N}\beta_k(z){A}_{\mid \Omega}(z)\dd z=0 \quad \forall k\in\{1,2\},$$
and therefore ${A}_{\mid \Omega}=0$ a.e. in $\Omega$ (recall $\beta_1+\beta_2>0$ on $\Omega$ by definition \eqref{Eq:Psi_k}).
The equation $A=T^\ep(A)$ ensures that $A=0$ a.e. in $\R^N$, that completes the proof of Theorem \ref{Thm:main} $(i)$.\qed

\subsection{Proof of Theorem \ref{Thm:main} $(ii)$}\label{existence}

We now turn to the proof of the existence of a nontrivial fixed point for the nonlinear operator $T^\ep$. To that aim we shall make use of the theory of global attractors and uniform persistence theory for which we refer to \cite{MagalZhao2005}. To perform our analysis and prove the theorem we define the sets
\begin{equation}\label{Eq:M}
\M_0:=\left\{\varphi\in L^1_+(\R^N):\; \int_{\Omega}\varphi(y)\dd y>0\right\}\text{ and } \partial \M_0=\{\varphi\in L^1_+(\R^N):\;\chi_\Omega \varphi=0\; a.e.\},
\end{equation}
so that 
$$
L_+^1(\R^N)=\M_0\cup \partial\M_0.
$$
Note also that we have the following invariant properties
\begin{equation*}
T^\ep(\M_0)\subset \M_0\text{ and }T^\ep(\partial \M_0)=\{0_{L^1}\}\subset \partial\M_0.
\end{equation*}
Next let us observe that $T^\ep$ is bounded on $L^1_+(\R^N)$. Indeed, recalling the definition of $\Psi_k$ in \eqref{Eq:Psi_k} it is readily checked that
\begin{equation}\label{item:pointdiss} 
\|T^\ep(\varphi)\|_{L^1(\R^N)}\leq \dfrac{\Lambda }{\delta \theta}\left[\xi_1 \|r_1\|_{L^\infty}+\xi_2 \|r_2\|_{L^\infty}\right],\;\forall \varphi\in L^1_+(\R^N).
\end{equation}
Our first lemma deals with the weak persistence of $T^\ep$ and  $T^\ep_k$ as defined in \eqref{eq:Tk}. Our result reads as follows.

\begin{lemma}\label{Lemma:Prel_3}
	Let Assumption \ref{Assump:Psi} be satisfied. 
	\begin{enumerate}
		\item If $r_\sigma(L_k^\ep)>1$ for some $k\in\{1,2\}$, then we have 
			\begin{equation}\label{unifpers-sep}
				\limsup_{n\to \infty}\int_{\R^N} \beta_k(y) (T^\ep_k)^n(\varphi)(y)\dd y\geq \dfrac{\theta}{2}\left(r_\sigma(L_k^\ep)-1\right),\;\forall \varphi\in \M_0. 
			\end{equation}
		\item If $r_\sigma(L^\ep)>1$, then there exists $k\in\{1,2\}$ such that
			\begin{equation}\label{unifpers}
				\limsup_{n\to \infty}\int_{\R^N} \beta_k(y) (T^\ep)^n(\varphi)(y)\dd y\geq \dfrac{\theta}{2}\left(r_\sigma(L^\ep)-1\right),\;\forall \varphi\in \M_0.
			\end{equation}
		\item {Let $k\in \{1,2\}$ and assume that $r_{\sigma}(L_k^\varepsilon)>1$. If $A\in \mathcal M_0$  is a fixed point of $T^\varepsilon$, i.e. $T^\varepsilon(A)=A$, then we have
			\begin{equation}\label{eq:lowerbound}
				\int_{\mathbb R^N}\beta_k(y)A(y)\dd y \geq \frac{\theta}{2}(r_\sigma(L^\varepsilon_k)-1). 
			\end{equation}}
	\end{enumerate}
\end{lemma}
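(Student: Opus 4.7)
The plan is to treat the three assertions by a common linearization-plus-contradiction argument. The starting point is the identity
\[
T_k^\ep(\varphi)=\frac{L_k^\ep(\varphi)}{1+\theta^{-1}\int_{\R^N}\beta_k\varphi},
\]
and its counterpart for the full operator: since $a/(1+x)+b/(1+y)\geq (a+b)/(1+x+y)$ for nonnegative $a,b,x,y$, one has
\[
T^\ep(\varphi)\geq \frac{L^\ep(\varphi)}{1+\theta^{-1}\int_{\R^N}(\beta_1+\beta_2)\varphi}.
\]
In each part, the idea is that if the relevant integral of $\beta_k$ against the iterate (or the fixed point) were too small, then the denominator on the right would be strictly less than $r_\sigma(L_k^\ep)$ (resp.\ $r_\sigma(L^\ep)$); by the spectral asymptotics of Lemma \ref{Lemma:Prel_2} this would force geometric growth of the iterate, contradicting the uniform $L^1$ bound \eqref{item:pointdiss}.

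For assertion $(1)$ I would argue by contradiction: assume $\limsup_n\int\beta_k(T_k^\ep)^n\varphi<\theta(r_\sigma(L_k^\ep)-1)/2$ for some $\varphi\in\M_0$ having positive mass on $\Omega_k:=\{\Psi_k>0\}$ (note that otherwise $T_k^\ep\varphi\equiv 0$, so this restriction is implicit in the statement). For $n$ beyond some $N$, the denominator $D_n:=1+\theta^{-1}\int\beta_k(T_k^\ep)^n\varphi$ stays below some $D<r_\sigma(L_k^\ep)$, and the positivity of $L_k^\ep$ gives by induction
\[
(T_k^\ep)^{N+n}\varphi\geq\frac{(L_k^\ep)^n\psi}{D^n},\qquad \psi:=(T_k^\ep)^N\varphi.
\]
By Lemma \ref{Lemma:Prel_2}, $(L_k^\ep)^n\psi/r_\sigma(L_k^\ep)^n$ converges in $L^1$ to a nonzero multiple of the Perron eigenfunction of $L_k^\ep$ (the coefficient being positive since $\psi$ has mass on $\Omega_k$ and the restriction of $L_k^\ep$ to $L^1(\Omega_k)$ is positivity improving). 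Testing against $\beta_k$ and using $r_\sigma(L_k^\ep)/D>1$ gives $\int\beta_k(T_k^\ep)^{N+n}\varphi\to+\infty$, contradicting the assumed $\limsup$.

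Assertion $(2)$ follows the same scheme applied to $L^\ep$: were $\limsup_n\int\beta_k(T^\ep)^n\varphi<\theta(r_\sigma(L^\ep)-1)/2$ to hold for both $k=1,2$, the combined denominator $1+\theta^{-1}\sum_k\int\beta_k(T^\ep)^n\varphi$ would eventually fall below some $D<r_\sigma(L^\ep)$, and iterating $T^\ep(\varphi)\geq L^\ep(\varphi)/D$ together with the spectral asymptotics for $L^\ep$ yields the same contradiction with \eqref{item:pointdiss}. Assertion $(3)$ is handled by applying the pointwise inequality $A=T^\ep(A)\geq L_k^\ep(A)/(1+\theta^{-1}\int\beta_k A)$ and iterating it to get $A\geq (L_k^\ep)^n A/D_k^n$ with $D_k:=1+\theta^{-1}\int\beta_k A$. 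Any nonzero fixed point of $T^\ep$ in $\M_0$ is in fact strictly positive on $\R^N$ (since $\Psi_{k'}A\not\equiv 0$ for some $k'$ and $m_\ep>0$ a.e.), so $\Pi_k^\ep A>0$ and the only way to avoid an $L^1$-blow-up is $D_k\geq r_\sigma(L_k^\ep)$; this yields the bound, in fact with the sharper constant $\theta$ in place of $\theta/2$.

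The main technical point will be, in each case, verifying that the spectral projection of the starting function onto the Perron eigenspace of $L_k^\ep$ (resp.\ $L^\ep$) is nontrivial, since this is what actually drives the geometric growth. This reduces to checking positive overlap with the dual Perron eigenfunction, hence to positive mass on $\Omega_k$ (resp.\ $\Omega=\Omega_1\cup\Omega_2$). For parts $(2)$ and $(3)$, one application of $m_\ep\star$ spreads mass throughout $\R^N$ (because $m_\ep>0$ a.e.), so positivity is automatic from $\varphi\in\M_0$; for part $(1)$, it relies on the implicit assumption that $\varphi$ already has mass on $\Omega_k$. Once this positivity point is secured, the remaining manipulations are routine combinations of the geometric iteration and the $L^1$ spectral convergence of Lemma \ref{Lemma:Prel_2}.
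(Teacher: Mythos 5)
Your proposal takes essentially the same route as the paper: argue by contradiction, use the assumed smallness of the $\beta_k$-integral to lower-bound $T^\ep$ (or $T_k^\ep$) by $c\,L^\ep$ (resp.\ $c\,L_k^\ep$) with $c\,r_\sigma>1$, iterate, and invoke the norm-blow-up part of Lemma~\ref{Lemma:Prel_2} against the $L^1$-bound~\eqref{item:pointdiss}. Your use of the convexity inequality $a/(1+x)+b/(1+y)\geq(a+b)/(1+x+y)$ to combine the two denominators is a slightly different packaging than the paper's choice of bounding each denominator by $(\theta+\eta)/\theta$ and summing; note that with your version the denominator bound $1+\theta^{-1}(2\eta)$ hits $r_\sigma(L^\ep)$ exactly when $\eta=\tfrac{\theta}{2}(r_\sigma(L^\ep)-1)$, so you need (as you implicitly do) the strictness of the $\limsup$ to pass to some $\eta'<\eta$; the paper's bookkeeping sidesteps this since $\tfrac{\theta}{\theta+\eta}r_\sigma(L^\ep)=\tfrac{2r_\sigma}{1+r_\sigma}>1$ already holds strictly. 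Your two side remarks are both legitimate: assertion~(1) as written fails for $\varphi\in\M_0$ supported away from $\Omega_k$ (then $T_k^\ep\varphi\equiv 0$), so the correct hypothesis there is $\int_{\Omega_k}\varphi>0$ rather than $\varphi\in\M_0$ — the paper's ``the proof is similar'' glosses over this. And for assertion~(3), since the denominator $1+\theta^{-1}\int\beta_k A$ is fixed along the iteration, the argument does give the sharper bound $\int\beta_k A\geq\theta(r_\sigma(L_k^\ep)-1)$; the $\theta/2$ in the paper is simply the convenient threshold carried over from~(1)--(2) and the extra factor of $1/2$ is not used elsewhere.
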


\begin{proof}
Let us first show \eqref{unifpers}.
We argue by contradiction by assuming that there exists $\varphi\in \M_0$ such that
	\begin{equation*}
		\limsup_{n\to \infty} \int_{\R^N}\beta_k(y)(T^\ep)^n(\varphi)(y)\dd y<\dfrac{\theta}{2}(r_\sigma(L^\ep)-1)=:\eta,\qquad k=1,2.
	\end{equation*}
Then, there exists an integer $n_0\geq 1$ such that
\begin{equation*}
	\int_{\R^N}\beta_k(y)(T^\ep)^{n}(\varphi)(y)\dd y\leq\eta,\qquad  \text{ for }k=1,2\text{ and }n\geq n_0
\end{equation*}
	therefore
	\begin{equation*}
		(T^\ep)^{n_0+1} (\varphi)(x)\geq\left(\dfrac{\theta }{\theta+\eta}L^\ep\right) ((T^\ep)^{n_0}(\varphi))(x),\qquad\text{for a.e. } x\in \R^N,
	\end{equation*}
	and by induction
	\begin{equation}\label{Eq:Weak_Persist}
		(T^\ep)^{n_0+n} (\varphi)\geq \left(\dfrac{\theta}{\theta+\eta}L^\ep\right)^n ((T^\ep)^{n_0}(\varphi))(x)
	\end{equation}
	for a.e. $x\in \Omega $ and for every $n\geq 1$.
Next set 
	$$\tilde{\varphi}=\left((T^\ep)^{n_0}(\varphi)\right)_{\mid \Omega}\in L^1_+(\Omega)\backslash\left\{0\right\}.$$
	By Lemma \ref{Lemma:Prel_1}, the operator
	${\left(\dfrac{\theta}{\theta+\eta} L^\ep\right)}_{\mid L^1(\Omega)}\in \L(L^1(\Omega))$
	is positivity improving, compact and satisfies
	\begin{equation*}
		r_\sigma\left({\left(\dfrac{\theta}{\theta+\eta} L^\ep\right)}_{\mid L^1(\Omega)}\right)=\dfrac{\theta}{\theta+\eta} r_\sigma(L^\ep)=\dfrac{2r_\sigma(L^\ep)}{1+r_\sigma(L^\ep)}>1
	\end{equation*}
	since $r_\sigma(L^\ep)>1$. Applying Lemma \ref{Lemma:Prel_2} yields 
	$$\lim_{n\to \infty}\left\|\left({\left(\dfrac{\theta}{\theta+\eta}L^\ep\right)}_{\mid L^1(\Omega)}\right)^n (\tilde{\varphi})\right\|_{L^1(\Omega)}=\infty,$$
	so that \eqref{Eq:Weak_Persist} ensures that the sequence $\|(T^\ep)^n(\varphi)\|_{L^1(\Omega)}$ is unbounded. This contradicts the point dissipativity of $T^\ep$ as stated in \eqref{item:pointdiss}. The proofs of \eqref{unifpers-sep} for $T_1^\ep$ and $T_2^\ep$ are similar.\medskip

	{Next we show \eqref{eq:lowerbound}. Let $A\in\mathcal M_0$ be a fixed point of $T^\varepsilon$,
 	we assume by contradiction that 
	$$\int_{\mathbb R^N}\beta_k(y)A(y)\dd y\leq \frac{\theta}{2}(r_\sigma(L_k^\varepsilon)-1)=: \eta.$$ 
	Then, since  $(T^\varepsilon)^n A=A$ for all $n\geq 0$ we have
	\begin{equation*}
		\int_{\mathbb R^N}\beta_k(y)\big[(T^\varepsilon)^n A\big](y)\dd y \leq \eta.
	\end{equation*}
	In particular the following inequalities hold: 
	\begin{align*}
		T^\varepsilon(A) & = T_1^\varepsilon(A)+T_2^\varepsilon(A)\geq T_k^\varepsilon(A)\geq \frac{\theta}{\theta+\eta}L_k^\varepsilon A \\ 
		(T^\varepsilon)^2(A) & = T_1^\varepsilon(T^\varepsilon A) + T_2^\varepsilon(T^\varepsilon A) \geq T_k^\varepsilon (T^\varepsilon A)\geq \frac{\theta}{\theta+\eta} L_k^\varepsilon \left(T^\varepsilon A\right) \geq \left(\frac{\theta}{\theta+\eta} L_k^\varepsilon\right)^2 A \\
		& \quad \vdots\\
		(T^\varepsilon)^n A& = T_1^\varepsilon (T^\varepsilon)^{n-1}A + T_2^\varepsilon(T^\varepsilon)^{n-1}A\geq T_k^\varepsilon(T^\varepsilon)^{n-1}A \geq \frac{\theta}{\theta+\eta} L_k^\varepsilon (T^\varepsilon)^{n-1} A \\
		&\geq \left(\frac{\theta}{\theta+\eta} L_k^\varepsilon\right)^{n} A
	\end{align*}
	By Lemma \ref{Lemma:Prel_1}, the operator
	${\left(\dfrac{\theta}{\theta+\eta} L_k^\ep\right)}_{\mid L^1(\Omega)}\in \L(L^1(\Omega))$
	is positivity improving, compact and satisfies
	\begin{equation*}
		r_\sigma\left({\left(\dfrac{\theta}{\theta+\eta} L_k^\ep\right)}_{\mid L^1(\Omega)}\right)=\dfrac{\theta}{\theta+\eta} r_\sigma(L_k^\ep)=\dfrac{2r_\sigma(L_k^\ep)}{1+r_\sigma(L_k^\ep)}>1
	\end{equation*}
	since $r_\sigma(L^\ep)>1$. Applying Lemma \ref{Lemma:Prel_2} leads to a contradiction. 

	Lemma \ref{Lemma:Prel_3} is proved.\qedhere
	}
\end{proof}

We are now able to complete the proof of Theorem \ref{Thm:main} $(ii)$.\\
\begin{proof}[Proof of Theorem \ref{Thm:main} $(ii)$]
Recall that throughout this section, $\ep>0$ is fixed. Assume that $r_\sigma(L^\ep)>1$. As $0 \le T^\ep \le L^\ep$ and as $L^\ep$ is compact (see Lemma \ref{Lemma:Prel_1}), then  $T^\ep$ is bounded and compact.  Now Theorem 2.9 in \cite{MagalZhao2005} applies and ensures that there is a compact global attractor $\mathcal A\subset L^1_+(\R^N)$ for $T^\ep$, {\it i.e.} $\mathcal A$ attracts every bounded subset of $L^1_+(\R^N)$ under the iteration of $T^\ep$. 
	Next by Lemma \ref{Lemma:Prel_3}, $T^\ep$ is weakly uniformly persistent with respect to the decomposition pair $\left(\M_0,\partial\M_0\right)$ of the state space $L_+^1(\R^N)$. Therefore {\cite[Proposition 3.2]{MagalZhao2005}} applies and ensures that $T^\ep$ is also strongly uniformly persistent with respect to this decomposition, {\it i.e.}  there exists $\kappa>0$ such that
 	\begin{equation*}
		\liminf_{n\to+\infty}\Vert (T^\ep)^n(\varphi)\Vert_{L^1(\Omega)}\geq \kappa,\;\forall \varphi\in \M_0.
	\end{equation*}
	As a consequence, according to \cite{MagalZhao2005}, $T^\ep_{\mid \M_0}$ admits a compact global  attractor $\mathcal A_0\subset \M_0$ and $T^\ep$ has at least one fixed point $A\in \mathcal A_0$. From the equation $A=T^\ep(A)$, it is readily checked that $A>0$ a.e. and belongs to $L^\infty(\R^N)$, while the uniform boundedness (with respect to $\ep$) of such a fixed point follows from \eqref{item:pointdiss}.
	
Finally, it remains to prove the continuity of the fixed point $A$. The facts that $\Psi_k A\in L^1(\R^N)$ for each $k=1,2$ and $m_\ep\in L^\infty(\R^N)$, imply (see \textit{e.g.} \cite[Corollary 3.9.6, p. 207]{Bogachev2007}) that $m_\ep \star (\Psi_k A) \in \Co(\R^N)$. From the expression \eqref{T} of $T^\ep$, it follows that $A\in \Co(\R^N)$.	This completes the proof of Theorem \ref{Thm:main} $(ii)$.
\end{proof}
%
%
%
%
%
%
%

\section{Proof of Theorem \ref{Thm:shape}}\label{Sec:Thm2}

In this section, we investigate the shape of the endemic equilibria and we prove Theorem \ref{Thm:shape}. Hence we assume throughout this section that Assumptions \ref{Assump:Psi},  \ref{Assump:supports} and \ref{Assump:gap} hold. We furthermore assume that
$$
R_0=\max\{R_{0,1},R_{0,2}\}>1.
$$
Next recall that since $r_\sigma(L^\ep)\to R_0$ as $\ep\to 0$, Theorem \ref{Thm:main} implies that Problem \eqref{Eq:AepsIntro} has at least a nontrivial fixed point for all $\ep$ sufficiently small. We denote by $A^\ep\in L_+^1(\R^N)$ such a nontrivial fixed point of $T^\ep$, for all $\ep$ small enough. It is not difficult to check that $A^\ep>0$ a.e. 

Recalling the definition of the open sets
$$
\Omega_k=\{x\in\R^N:\;\Psi_k(x)>0\},\;k=1,2, \qquad \Omega=\Omega_1\sqcup \Omega_2=\{x\in\mathbb R^N:\;\Psi(x)>0\},
$$
note that Assumption \ref{Assump:supports} ensures that there exists $\eta>0$ such that $\|x-y\|\geq \eta$ for all $(x,y)\in \Omega_1\times\Omega_2$. In what follows the functions $\chi_{\Omega_k}$ denotes the characteristic functions for $\Omega_k$.

Throughout this section, for all $\ep>0$ small enough, $A^\ep\in L^1_+(\mathbb R^N)\setminus \{0\}$ denotes a positive solution to the equation: 
\begin{equation}\label{Eq:A}
	A^\ep=\frac{\Lambda\xi_1 (m_\ep\star\Psi_1A^\ep)}{\theta+\int_{\mathbb R^N}\beta_1(z)A^\ep(z)\dd z}+\frac{\Lambda\xi_2 (m_\ep\star\Psi_2A^\ep)}{\theta+\int_{\mathbb R^N}\beta_2(z)A^\ep(z)\dd z}.
\end{equation}

\subsection{Preliminary estimates}

Recall the definition of $L_k^\ep$ in \eqref{Eq:Lk_ep}. Let  $\phi^{\ep,1}_k\in L^1_+(\R^N)$ with $\phi^{\ep,1}_k>0$ and $\|\phi_k^{\ep,1}\|_{L^1(\R^N)}=1$ be the principal eigenvector of $L^\ep_k$ associated to its principal eigenvalue, which is equal to the spectral radius $r_\sigma(L_k^\ep)$.
We now recall some results related to the one host model. We refer to \cite{Djidjou2017} for more details (see also Lemma \ref{Lemma:Prel_1}).
\begin{lemma}\label{LE-one-host}
Let Assumption \ref{Assump:Psi} be satisfied. Let $k \in \{1,2\}$ and $\ep>0$ be given and assume that $r_\sigma(L_k^\ep)>1$.
Then the equation
\begin{equation*}
A_k=\dfrac{\Lambda\xi_k \chi_{\Omega_k} (m_{\ep }\ast (\Psi_k A_k))}{\theta+\int_{\R^N}\beta_k(z) A_k(z)\dd z},\;\qquad A_k\in L^1_+(\mathbb R^N)\setminus\{0\},
\end{equation*}
has a unique solution, given by
\begin{equation}\label{nuk}
	A^{\ep, *}_k=\nu_k^\ep \phi_k^{\ep,1}\text{ with }\nu^\ep_k=\dfrac{\theta(r_\sigma(L_k^\ep)-1)}{\int_{\R^N}\beta_k(z)\phi^{\ep,1}_k(z)\dd z}.
\end{equation}
\end{lemma}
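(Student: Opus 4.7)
The plan is to exploit the fact that, once the denominator in the fixed point equation of Lemma~\ref{LE-one-host} is treated as a positive scalar, the problem reduces to identifying the nonnegative eigenfunctions of the compact, positivity improving operator $L_k^\ep$. By the Krein-Rutman / Perron type statement already contained in Lemma~\ref{Lemma:Prel_1}, such an eigenfunction must be a positive multiple of the principal eigenvector $\phi_k^{\ep,1}$, and self-consistency on the denominator determines the multiplicative constant uniquely. In particular, existence and uniqueness will be obtained simultaneously.

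Concretely, given a nontrivial solution $A_k\in L^1_+(\R^N)\setminus\{0\}$, I would first introduce the positive scalar
$$
c := \theta + \int_{\R^N}\beta_k(z) A_k(z)\dd z > 0,
$$
so that the equation rewrites as
$$
A_k \;=\; \frac{\Lambda\xi_k}{c}\,\chi_{\Omega_k}\bigl(m_\ep\ast(\Psi_k A_k)\bigr) \;=\; \frac{\theta}{c}\,\chi_{\Omega_k}\, L_k^\ep A_k.
$$
Since $\Psi_k$ vanishes outside $\Omega_k$, this identity shows that $A_k\equiv 0$ outside $\Omega_k$ and that, viewed as an element of $L^1(\Omega_k)$, $A_k$ is a nontrivial nonnegative eigenfunction of the restricted operator ${L_k^\ep}_{\mid L^1(\Omega_k)}$ with positive eigenvalue $c/\theta$.

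The key step is then to apply Lemma~\ref{Lemma:Prel_1}: ${L_k^\ep}_{\mid L^1(\Omega_k)}$ is compact and positivity improving, hence by Krein-Rutman its only nonnegative eigenfunctions (up to scalar multiplication) belong to the one-dimensional eigenspace associated with the spectral radius $r_\sigma(L_k^\ep)$. This forces
$$
\frac{c}{\theta} = r_\sigma(L_k^\ep),\qquad A_k = \nu\, \phi_k^{\ep,1}\;\text{for some }\nu>0.
$$

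Substituting back into the definition of $c$ yields the algebraic identity
$$
\theta\, r_\sigma(L_k^\ep) \;=\; \theta + \nu \int_{\R^N}\beta_k(z)\phi_k^{\ep,1}(z)\dd z,
$$
which uniquely determines $\nu = \nu_k^\ep$ as in \eqref{nuk}; positivity of $\nu_k^\ep$ is precisely the content of the assumption $r_\sigma(L_k^\ep)>1$. Conversely, a direct verification using $L_k^\ep \phi_k^{\ep,1} = r_\sigma(L_k^\ep)\phi_k^{\ep,1}$ confirms that $\nu_k^\ep\phi_k^{\ep,1}$ is a genuine solution, closing the equivalence. I do not foresee any serious obstacle in this argument: all spectral and compactness information required is already encapsulated in Lemma~\ref{Lemma:Prel_1} and in the appendix, and the remainder is a one-line algebraic normalization.
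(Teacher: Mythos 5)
Your argument is correct and is exactly the one the paper has in mind: the paper itself gives no proof of Lemma \ref{LE-one-host}, merely citing \cite{Djidjou2017} and Lemma \ref{Lemma:Prel_1}, and what you write is the standard Krein--Rutman reduction that those references encapsulate. Writing the denominator as a scalar $c>0$ turns the fixed-point equation into an eigenvalue equation for ${L_k^\ep}_{\mid L^1(\Omega_k)}$; Lemma \ref{Lemma:Prel_1} forces the eigenvalue to be $r_\sigma(L_k^\ep)$ and the eigenfunction to lie in $\mathrm{span}(\phi_k^{\ep,1})$; and the scalar $c$ then pins down the multiplier $\nu$ uniquely, with positivity equivalent to $r_\sigma(L_k^\ep)>1$. (For completeness you could note that $\int_{\R^N}\beta_k\,\phi_k^{\ep,1}>0$ — which holds since $\phi_k^{\ep,1}>0$ a.e. and $\beta_k\not\equiv 0$ — so that $\nu_k^\ep$ is well defined, but this is a one-line remark and not a gap in the argument.)
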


Now, using the separation assumption on the sets $\Omega_k, k\in\{1,2\}$ and the decay at infinity of $m$, we derive the following preliminary lemma that will be used to prove Theorem \ref{Thm:shape} in the next subsection.

\begin{lemma}\label{Lemma:Decay}
Suppose that Assumptions \ref{Assump:Psi} and \ref{Assump:supports} are satisfied. Then, for each $(k,l)\in\{1,2\}^2$ with $k\neq l$, the following properties hold:
\begin{enumerate}
\item[(a)]  we have 
	\begin{equation*}
		\int_{\mathbb R^N}\chi_{\Sigma_l}\phi^{\ep,1}_k(z)\dd z=o(\ep^\infty), \qquad \nu^\ep_k\int_{\mathbb R^N}\chi_{\Sigma_l} \phi^{\ep,1}_k(z)\dd z=o(\ep^\infty),
	\end{equation*}
		for all $\ep\ll 1$, where $\Sigma_l$ and $\nu_k^\ep$ are respectively defined in  \eqref{support} and \eqref{nuk}.

\item[(b)] Let $p\in [1,\infty)$ be given. Then, for any $A\in L^1_+(\mathbb R^N)$, the following estimate holds
\begin{equation}\label{Eq:Decay_G}
	\Vert \chi_{\Sigma_k} m_\ep*(\Psi_l A)\Vert_{L^p(\mathbb R^N)}=\|A\|_{L^1(\Omega_l)}\times o(\ep^\infty),
\end{equation}
		where the term $o(\ep^\infty)$ is independent of $A\in L^1_+(\mathbb R^N)$.
\end{enumerate}
\end{lemma}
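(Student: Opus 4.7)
The common engine for both statements is a pointwise tail estimate on $m_\ep$ away from the origin: by Assumption~\ref{Assump:supports} we have $\dist(\Sigma_1,\Sigma_2)\geq \eta>0$, and the decay hypothesis $\|z\|^n m(z)\to 0$ for every $n\in\mathbb N$ yields
\begin{equation*}
\sup_{\|z\|\geq \eta/\ep} m(z)=o(\ep^\infty),\qquad\text{hence}\qquad \sup_{\|w\|\geq \eta} m_\ep(w)=\ep^{-N}\sup_{\|z\|\geq\eta/\ep}m(z)=o(\ep^\infty).
\end{equation*}
I would start by proving (b) since it is the cleanest application of this bound. Recall that $\Psi_l$ is supported in $\overline{\Omega_l}\subset \Sigma_l$, so that for $x\in\Sigma_k$ every $y$ contributing to $(m_\ep\star(\Psi_l A))(x)$ satisfies $\|x-y\|\geq \eta$. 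Hence
\begin{equation*}
|(m_\ep\star (\Psi_l A))(x)|\leq \Bigl(\sup_{\|w\|\geq\eta}m_\ep(w)\Bigr)\|\Psi_l\|_{L^\infty}\|A\|_{L^1(\Omega_l)},\qquad \forall x\in\Sigma_k.
\end{equation*}
Since $\Sigma_k$ is compact (Assumption~\ref{Assump:supports}), taking $L^p$ norms over $\Sigma_k$ and multiplying by $|\Sigma_k|^{1/p}$ gives \eqref{Eq:Decay_G}, with an $o(\ep^\infty)$ factor independent of $A$.

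For (a), I would apply the same trick to the eigenvalue equation $r_\sigma(L_k^\ep)\phi_k^{\ep,1}=\tfrac{\Lambda\xi_k}{\theta}m_\ep\star(\Psi_k\phi_k^{\ep,1})$. For $x\in\Sigma_l$ with $l\neq k$, the integrand is supported on $\Omega_k\subseteq\Sigma_k$, hence again $\|x-y\|\geq \eta$ throughout, and using $\|\phi_k^{\ep,1}\|_{L^1}=1$ one obtains the pointwise bound
\begin{equation*}
\phi_k^{\ep,1}(x)\leq \frac{1}{r_\sigma(L_k^\ep)}\frac{\Lambda\xi_k}{\theta}\|\Psi_k\|_{L^\infty}\sup_{\|w\|\geq\eta}m_\ep(w)=o(\ep^\infty),\qquad\forall x\in\Sigma_l.
\end{equation*}
(The prefactor $1/r_\sigma(L_k^\ep)$ is harmless since $r_\sigma(L_k^\ep)\to R_{0,k}>0$ by \eqref{Eq:R0,k} and Assumption~\ref{Assump:Psi}.) Integrating this estimate over the compact set $\Sigma_l$ yields the first assertion in (a).

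The only remaining obstacle is to show $\nu_k^\ep$ remains bounded as $\ep\to 0$, so that the second assertion in (a) follows by multiplication. This is the delicate point since $\nu_k^\ep$ has the integral $\int\beta_k\phi_k^{\ep,1}$ in its denominator, and one needs to rule out the possibility that $\phi_k^{\ep,1}$ concentrates outside $\supp\beta_k$. I would obtain this by integrating the eigenvalue equation to get the identity $\int_{\mathbb R^N}\Psi_k(y)\phi_k^{\ep,1}(y)\dd y=\frac{\theta r_\sigma(L_k^\ep)}{\Lambda\xi_k}$, and then using the pointwise inequality $\Psi_k\leq \frac{\|r_k\|_{L^\infty}}{\delta\theta}\beta_k$ to derive
\begin{equation*}
\int_{\mathbb R^N}\beta_k(y)\phi_k^{\ep,1}(y)\dd y\geq \frac{\delta\theta^2\,r_\sigma(L_k^\ep)}{\Lambda\xi_k\|r_k\|_{L^\infty}}.
\end{equation*}
Since $r_\sigma(L_k^\ep)\to R_{0,k}>0$, this lower bound is uniform in small $\ep$, and combined with the hypothesis $r_\sigma(L_k^\ep)>1$ used to define $\nu_k^\ep$, it gives a uniform upper bound on $\nu_k^\ep$. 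Multiplying by the $o(\ep^\infty)$ bound already established for $\int_{\Sigma_l}\phi_k^{\ep,1}$ completes the proof.
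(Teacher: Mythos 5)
Your proof is correct and follows the same route as the paper for part (b) and for the first estimate in (a): both rely on the tail bound $\sup_{\|w\|\geq\eta}m_\ep(w)=o(\ep^\infty)$ applied, respectively, to the convolution restricted to $\Sigma_k$ and to the eigenvalue equation \eqref{ei} for $x\in\Sigma_l$.

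Where you diverge is in the control of $\nu_k^\ep$. The paper obtains a much weaker (but still sufficient) bound: using the pointwise estimate $\phi_k^{\ep,1}(x)\leq \frac{\Lambda\xi_k}{\theta r_\sigma(L_k^\ep)}\frac{\|m_\ep\|_{L^\infty}\|r_k\|_{L^\infty}}{\delta\theta}\int\beta_k\phi_k^{\ep,1}$ together with $\|m_\ep\|_{L^\infty}=\mathcal O(\ep^{-N})$ and the normalization, it concludes $\int_{\Sigma_k}\beta_k\phi_k^{\ep,1}\geq C\ep^N$ and hence $\nu_k^\ep=\mathcal O(\ep^{-N})$, which is harmless since $o(\ep^\infty)$ absorbs any negative power of $\ep$. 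You instead integrate the eigenvalue equation over $\R^N$ and use $\int m_\ep=1$ to obtain the exact identity $\int\Psi_k\phi_k^{\ep,1}=\theta r_\sigma(L_k^\ep)/(\Lambda\xi_k)$, then combine it with the pointwise inequality $\beta_k\geq \frac{\delta\theta}{\|r_k\|_{L^\infty}}\Psi_k$ (which follows from the definition \eqref{Eq:Psi_k}) to get a lower bound on $\int\beta_k\phi_k^{\ep,1}$ that is \emph{uniform} in $\ep$, hence $\nu_k^\ep=\mathcal O(1)$. Your version is both cleaner (it uses a conserved quantity rather than an $L^\infty$ estimate on the kernel) and strictly stronger; the uniform bound on $\nu_k^\ep$ is in fact the sharp behaviour, as suggested by Corollary \ref{Thm:concentration} where $\nu_k^\ep$ converges to $\theta(R_{0,k}-1)/\beta_k(x_k)$. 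Either bound, of course, suffices to conclude the lemma.
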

\begin{proof}
We first prove $(a)$. To that aim let us first notice that, due to Assumption \ref{Assump:supports}, there exists $\eta>0$ such that $\|x-y\|\geq \eta$ for all $(x,y)\in \Sigma_1\times\Sigma_2$. Thus, due to the decay assumption for $m$ at infinity, one obtains
\begin{equation}\label{small}
m_\ep(x-y)=o(\ep^\infty),
\end{equation}
	uniformly for $(x,y)$ in the compact set $\Sigma_1\times\Sigma_2$. Now let $(k,l)\in\{1,2\}^2, k\neq l$ be given. By the definition of $\phi^{\ep,1}_k$ we have
\begin{equation}\label{ei}
\phi^{\ep,1}_k=\dfrac{\Lambda \xi_k}{\theta r_\sigma\left(L^{\ep}_k\right)}\left(m_\ep \star (\Psi_k \phi^{\ep,1}_k)\right).
\end{equation}
	Integrating \eqref{ei} over $\Sigma_l$ and recalling that $\|\phi_k^{\ep,1}\|_{L^1(\R^N)}=1$ we get
$$
	\int_{\mathbb R^N}\chi_{\Sigma_l}\phi^{\ep,1}_k(z)\dd z \leq \dfrac{\Lambda \xi_k |\Sigma_l| }{\theta r_\sigma\left(L^{\ep}_k\right)}\|\Psi_k\|_{L^\infty(\R^N)} \sup_{(x,y)\in \Sigma_k\times \Sigma_l} m_\ep(x-y).
$$
Since $r_\sigma(L_k^\ep)\to R_{0,k}>0$ as $\ep\to 0$ (recalling \eqref{Eq:R0,k}), this yields
$$
	\int_{\mathbb R^N}\chi_{\Sigma_l}\phi^{\ep,1}_k(z)\dd z=o(\ep^\infty)\text{ as }\ep\to 0,
$$
and completes the proof of the first estimate in $(a)$. Next coming back to \eqref{ei} and recalling the definition of $\Psi_k$ in \eqref{Eq:Psi_k} we get for all $x\in\R^N$
\begin{equation*}
	\phi^{\ep,1}_k(x)\leq \dfrac{\Lambda \xi_k}{\theta r_\sigma\left(L^{\ep}_k\right)}\frac{\|m_\ep\|_{L^\infty}\|r_k\|_{L^\infty}}{\delta\theta}\int_{\mathbb R^N} \beta_k(y) \phi^{\ep,1}_k(y)\dd y.
\end{equation*}
Hence since $\|\phi_k^{\ep,1}\|_{L^1(\R^N)}=1$ and $\|m_\ep\|_{L^\infty}=\mathcal O(\ep^{-N})$, integrating the above inequality over the bounded set $\Sigma_k$,  there exists a constant $C>0$ such that 
\begin{equation*}
\int_{\Sigma_k} \beta_k(y) \phi^{\ep,1}_k(y)\dd y\geq C\ep^N,\;\forall \ep\ll 1.
\end{equation*}
Hence we get
$$
	\nu_k^\ep=\mathcal{O}(\ep^{-N})\text{ as }\ep\to 0,
$$
and the second estimate in $(a)$ follows. We now turn to the proof of $(b)$. 
Let $A\in L^1_+(\mathbb R^N)$ be given. Then we have, for all $\ep>0$,
\begin{equation*}
\left\vert m_\ep \star (\Psi_2 A)(x)\right\vert \leq \sup_{(y,z)\in \Sigma_1\times \Sigma_2} m_\ep(y-z)\;\;\|\Psi_2\|_{L^\infty} \|A\|_{L^1(\Omega_2)},\;\forall x\in \Sigma_1.
\end{equation*}
Hence, integrating the above inequality on the compact set $\Sigma_1$, we obtain that, for all $p\in [1,\infty)$:
\begin{equation*}
	\|\chi_{\Sigma_1}m_\ep \star (\Psi_2 A)(x)\|_{L^p(\mathbb R^N)}\leq |\Sigma_1|^{1/p} \sup_{(y,z)\in \Sigma_1\times \Sigma_2} m_\ep(y-z)\;\;\|\Psi_2\|_{L^\infty} \|A\|_{L^1(\Omega_2)},
\end{equation*}
and the estimate with $k=1$ and $l=2$ follows recalling \eqref{small}.
The other estimate interchanging the index $1$ and $2$ is similar.
This completes the proof of $(b)$.
\end{proof}

\subsection{Proof of Theorem \ref{Thm:shape}}

This section is devoted to the proof of Theorem \ref{Thm:shape}. 
Throughout this section we assume that $R_0>1$ so that, since $r_\sigma(L^\ep)\to R_0$ as $\ep\to 0$, there exists $\ep_0>0$ such that Problem \eqref{Eq:AepsIntro} has a nontrivial fixed point $A^\ep$ for each $\ep\in (0,\ep_0]$ (see Theorem \ref{Thm:main}).
Recall that since $T^\ep$ is bounded with respect to $\ep$, there exists $M>0$ such that
$$
\|A^\ep\|_{L^1(\R^N)}\leq M,\;\forall \ep\in (0,\ep_0].
$$
As before, set 
\begin{equation*}
A_k^\ep=\chi_{\Omega_k} A^\ep,\;k=1,2,\;\ep\in (0,\ep_0],
\end{equation*}
and observe that $\|A_1^\ep\|_{L^1(\Omega_1)}+\|A_2^\ep\|_{L^1(\Omega_2)}\leq M$ for all $\ep\in (0,\ep_0]$. Now let us define
\begin{equation}\label{Eq:Def_Mu}
\mu^\ep_k=\theta+\displaystyle \int_{\R^N}\beta_k(z)A^\ep(z)\dd z, \quad \forall k\in\{1,2\}, \quad \forall \ep\in(0,{\ep_0}]
\end{equation}
as well as
$$\Psi^\ep:=\dfrac{\Lambda \xi_1 \Psi_1}{\mu^\ep_1}+\dfrac{\Lambda \xi_2 \Psi_2}{\mu^\ep_2}.$$
With these notations, note that $A^\ep$ becomes a positive fixed point for the linear operator $K^\ep\in \L(L^1(\R^N))$ defined by
\begin{equation*}
	K^\ep\varphi :=m_\ep\star \left(\Psi^\ep\varphi\right), \qquad \varphi\in L^1(\R^N).
\end{equation*}
Our first step consists in proving the next lemma.
\begin{lemma}\label{LE1}
The following estimate holds
\begin{equation}\label{Eq:Key}
r_\sigma(L_k^\ep)\leq \dfrac{\mu^\ep_k}{\theta},\;\forall \ep\in (0,\ep_0].
\end{equation}
\end{lemma}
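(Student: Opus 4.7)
\textbf{Plan for the proof of Lemma \ref{LE1}.} The core idea is that $A^\ep$ is a positive super-solution for the rescaled one-host operator $\tfrac{\theta}{\mu_k^\ep}L_k^\ep$, and by Krein--Rutman-type reasoning this forces the corresponding spectral radius to be $\leq 1$.

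First I would rewrite the fixed-point equation $A^\ep=T^\ep(A^\ep)$ in the equivalent form
\begin{equation*}
A^\ep \;=\; \frac{\theta}{\mu_1^\ep}\,L_1^\ep A^\ep \;+\; \frac{\theta}{\mu_2^\ep}\,L_2^\ep A^\ep,
\end{equation*}
using the definition \eqref{Eq:Def_Mu} of $\mu_k^\ep$. Since $L_l^\ep$ is a positive operator and $A^\ep\geq 0$, dropping one of the two (nonnegative) terms yields the key pointwise inequality
\begin{equation*}
A^\ep \;\geq\; \frac{\theta}{\mu_k^\ep}\,L_k^\ep A^\ep \qquad \text{a.e. on }\R^N,\quad k=1,2.
\end{equation*}

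Next I would work on $L^1(\Omega_k)$, on which (by Lemma \ref{Lemma:Prel_1}) the restriction $L_k^\ep$ is a compact, positivity-improving operator whose spectral radius $r_\sigma(L_k^\ep)$ is a simple eigenvalue with the positive eigenfunction $\phi_k^{\ep,1}$. Applying Krein--Rutman to the (compact, positive) dual operator $(L_k^\ep)^*$ acting on $L^\infty(\Omega_k)$ yields a strictly positive eigenfunctional $\phi_k^{\ep,*}\in L^\infty(\Omega_k)$ satisfying $(L_k^\ep)^*\phi_k^{\ep,*} = r_\sigma(L_k^\ep)\,\phi_k^{\ep,*}$. Testing the above inequality on $\Omega_k$ against $\phi_k^{\ep,*}$ (and noting that $L_k^\ep A^\ep$ only sees $A^\ep|_{\Omega_k}$ because $\Psi_k$ vanishes outside $\Omega_k$) gives
\begin{equation*}
\int_{\Omega_k}\phi_k^{\ep,*}(x)\,A^\ep(x)\,\dd x \;\geq\; \frac{\theta\, r_\sigma(L_k^\ep)}{\mu_k^\ep}\int_{\Omega_k}\phi_k^{\ep,*}(x)\,A^\ep(x)\,\dd x.
\end{equation*}
Since $A^\ep>0$ a.e.\ and $\phi_k^{\ep,*}>0$, the integral on both sides is strictly positive, so it can be divided out, yielding $\tfrac{\theta\, r_\sigma(L_k^\ep)}{\mu_k^\ep}\leq 1$, which is exactly \eqref{Eq:Key}.

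The main (mild) obstacle is ensuring the existence and strict positivity of the dual eigenfunction $\phi_k^{\ep,*}$; this is standard once one knows that $L_k^\ep|_{L^1(\Omega_k)}$ is compact and positivity improving, so it should follow from the very spectral machinery collected in the appendix and used throughout Lemma \ref{Lemma:Prel_1}. An equivalent route, avoiding duality altogether, would invoke the Collatz--Wielandt characterization directly: for a compact positive operator on an ordered Banach space with a positive eigenvector at the spectral radius, the existence of a positive super-solution $A^\ep$ with $\tfrac{\theta}{\mu_k^\ep}L_k^\ep A^\ep\leq A^\ep$ immediately implies $r_\sigma(\tfrac{\theta}{\mu_k^\ep}L_k^\ep)\leq 1$. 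Either version produces the claim in two lines after the initial manipulation of the fixed-point equation.
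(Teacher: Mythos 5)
Your proof is correct and reaches the same super-solution inequality $A^\ep \geq \frac{\theta}{\mu_k^\ep}L_k^\ep A^\ep$ (on $\Omega_k$) as the paper does, but the closing step is different. The paper iterates the inequality to get $0\leq \bigl(\frac{\theta}{\mu_k^\ep}L_k^\ep\bigr)^n A^\ep\leq A^\ep$ for all $n$, and then invokes the contrapositive of Lemma \ref{Lemma:Prel_2}\,\ref{item:opnorm} (boundedness of the orbit of a nontrivial nonnegative vector forces $r_\sigma\leq 1$). You instead test the single-step inequality against a strictly positive dual eigenfunctional $\phi_k^{\ep,*}\in L^\infty(\Omega_k)$ of $(L_k^\ep)^*$ and divide out the positive pairing $\langle\phi_k^{\ep,*},A^\ep\rangle$, or equivalently appeal to the Collatz--Wielandt characterization for positive super-solutions. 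Both routes rest on the same Krein--Rutman machinery (the strict positivity $\phi_k^{\ep,*}\gg 0$, which you cite as the ``mild obstacle,'' is precisely the Zerner-type fact the paper already uses inside the proof of Lemma \ref{Lemma:Prel_2}), so neither is circular. The paper's version has the pragmatic advantage of reusing the iteration dichotomy already established in the appendix without introducing the adjoint operator explicitly, whereas your version is a one-step argument that makes the underlying Perron--Frobenius mechanism more transparent. Either is acceptable; there is no gap in your reasoning.
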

\begin{proof}
Let us first note that
\begin{equation}\label{Eq:Maj_Mu_k}
\theta \leq \mu^\ep_k\leq \theta+M \|\beta_k\|_{L^\infty}, \quad \forall k\in \{1,2\}, \quad \forall \ep\in(0,\ep_0]
\end{equation}
	for some constant $M>0$. Note that since $A^\ep>0$ and $K^\ep A^\ep=A^\ep$, we obtain by a version of the Krein-Rutman theorem (see {\it e.g.} \cite[Corollary 4.2.15 p.273]{MeyerNieberg91}) that
$$r_\sigma(K^\ep)=1,\forall \ep\in(0,\ep_0].$$
	On the other hand, we have $A^\ep=K^\ep A^\ep = \frac{\theta}{\mu_1^\ep}L_1^\ep A^\ep+\frac{\theta}{\mu_2^\ep}L_2^\ep A$, thus for $n\geq 1$ we have 
	\begin{equation*}
		0\leq \left(\frac{\theta}{\mu_1^\ep}L_1^\ep\right)^n A^\ep\leq \left(\frac{\theta}{\mu_1^\ep}L_1^\ep\right)^{n-1} A^\ep\leq\ldots\leq  \frac{\theta}{\mu_1^\ep}L_1^\ep A^\ep\leq A^\ep,
	\end{equation*}
	therefore the contrapositive of  Lemma \ref{Lemma:Prel_2} item \ref{item:opnorm} shows that $\frac{\theta}{\mu_1^\ep}r_\sigma(L_1^\ep)\leq 1$. Similarly, we have $\frac{\theta}{\mu_2^\ep}r_\sigma(L_2^\ep)\leq 1$. 
\end{proof}

We recall that throughout this section the condition $R_0=\max\{R_{0,1},R_{0,2}\}>1$ holds.
We now set $\Theta_k=\sqrt{\Psi_k}$ and we define the self-adjoint operators $S_k^\ep\in \L(L^2(\Omega_k))$ (recall $\Omega_k=\{\Psi_k>0\}$), for $k=1,2$, by
\begin{equation*}
S_k^\ep =\dfrac{\Lambda \xi_k}{\theta} \Theta_k(m_\ep \star (\Theta_k \cdot)).
\end{equation*}
Here recall that $\sigma(S_k^\ep)=\sigma(L_k^\ep)$ since $\Omega_k$ is bounded (see Lemma \ref{Lemma:Prel_1}). Our next lemma reads as follows.
\begin{lemma}\label{LE2}
Let $k\in\{1,2\}$ be such that $R_{0,k}>1$. Then we have
\begin{equation}\label{Eq:Key_dist}
{\rm dist}\left(\dfrac{\mu^\ep_k}{\theta}, \sigma(S^\ep_k)\right)={\rm dist}\left(\dfrac{\mu^\ep_k}{\theta}, \sigma(L^\ep_k)\right)=o(\ep^\infty), \;\forall \ep\ll 1.
\end{equation}
\end{lemma}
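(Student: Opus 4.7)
The plan is to construct an explicit approximate eigenvector for the self-adjoint operator $S_k^\ep\in \L(L^2(\Omega_k))$ at the value $\lambda:=\mu_k^\ep/\theta$, with residual of order $o(\ep^\infty)$ in $L^2(\Omega_k)$ and norm bounded below uniformly in $\ep$. Once this is done, the classical inequality
\begin{equation*}
	\dist(\lambda,\sigma(S_k^\ep))\leq \frac{\|(S_k^\ep-\lambda)\psi\|_{L^2(\Omega_k)}}{\|\psi\|_{L^2(\Omega_k)}},
\end{equation*}
which is valid for any self-adjoint operator and any nonzero $\psi$, immediately yields the first estimate. The equality $\dist(\lambda,\sigma(S_k^\ep))=\dist(\lambda,\sigma(L_k^\ep))$ then comes from the conjugation $S_k^\ep=\Theta_k L_k^\ep\Theta_k^{-1}$ and the spectral identifications already recalled in Lemma \ref{Lemma:Prel_1} (see also the Appendix).

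The natural candidate is $\psi_k:=(\Theta_k A^\ep)_{|\Omega_k}$, extended by $0$ outside $\Omega_k$. The key observation is that $\Theta_k\psi_k=\Psi_k A^\ep$ on $\R^N$, which makes the action of $S_k^\ep$ on $\psi_k$ directly computable and produces
\begin{equation*}
	(S_k^\ep-\lambda)\psi_k=\Theta_k\chi_{\Omega_k}\bigl[L_k^\ep A^\ep-\lambda A^\ep\bigr].
\end{equation*}
Rearranging the fixed-point relation $A^\ep=(\theta/\mu_1^\ep)L_1^\ep A^\ep+(\theta/\mu_2^\ep)L_2^\ep A^\ep$ gives $L_k^\ep A^\ep-\lambda A^\ep=-(\mu_k^\ep/\mu_\ell^\ep)L_\ell^\ep A^\ep$, with $\ell\neq k$. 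Since $\Omega_k\subset\Sigma_k$, Lemma \ref{Lemma:Decay}(b) applied with $p=2$ to the uniformly $L^1$-bounded family $(A^\ep)$ forces $\|\chi_{\Omega_k}L_\ell^\ep A^\ep\|_{L^2(\R^N)}=o(\ep^\infty)$, and hence $\|(S_k^\ep-\lambda)\psi_k\|_{L^2(\Omega_k)}=o(\ep^\infty)$.

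The delicate part, and the main obstacle I anticipate, is to obtain a lower bound on $\|\psi_k\|_{L^2(\Omega_k)}^2=\int_{\Omega_k}\Psi_k(A^\ep)^2$ that is uniform in $\ep$. I would first invoke Cauchy-Schwarz: since $\Psi_k$ is bounded and compactly supported, $\int\Psi_k<\infty$ and
\begin{equation*}
	\int_{\Omega_k}\Psi_k(A^\ep)^2\geq \frac{\left(\int\Psi_k A^\ep\right)^2}{\int\Psi_k},
\end{equation*}
so it is enough to bound $\int\Psi_k A^\ep$ away from $0$. To that end, restrict the fixed-point equation to the bounded set $\Sigma_k$, integrate, and apply Young's inequality to the diagonal term while controlling the cross term through Lemma \ref{Lemma:Decay}(b); this produces $\int_{\Sigma_k}A^\ep\leq(\Lambda\xi_k/\mu_k^\ep)\int\Psi_k A^\ep+o(\ep^\infty)$. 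The assumption $R_{0,k}>1$ (whence $r_\sigma(L_k^\ep)>1$ for $\ep\ll 1$) combined with Lemma \ref{Lemma:Prel_3}(3) yields $\int\beta_k A^\ep\geq (\theta/2)(r_\sigma(L_k^\ep)-1)\geq c>0$, so that $\int_{\Sigma_k}A^\ep\geq c/\|\beta_k\|_{L^\infty}$. Together with the upper bound $\mu_k^\ep\leq \theta+M\|\beta_k\|_{L^\infty}$ recorded in \eqref{Eq:Maj_Mu_k}, this delivers a uniform positive lower bound for $\int\Psi_k A^\ep$ and thus for $\|\psi_k\|_{L^2(\Omega_k)}$, completing the argument.
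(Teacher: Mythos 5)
Your proof is correct and follows essentially the same route as the paper: you take $\Theta_k A^\ep$ as the approximate eigenvector of the self-adjoint operator $S_k^\ep$, obtain the residual estimate $\|(S_k^\ep-\mu_k^\ep/\theta)\Theta_k A^\ep\|_{L^2(\Omega_k)}=o(\ep^\infty)$ from the fixed-point relation and Lemma \ref{Lemma:Decay}(b), invoke the resolvent bound for self-adjoint operators, and secure a uniform positive lower bound on the test vector's norm by combining Lemma \ref{Lemma:Prel_3}(3) with a restricted-and-integrated form of the fixed-point equation. The only differences are cosmetic (an algebraic rearrangement of the fixed-point identity before multiplying by $\Theta_k$, and running Cauchy–Schwarz with the weight $\Psi_k$ rather than against $|\Omega_k|$).
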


\begin{proof}
	Let us assume that $R_{0,1}>1$ (the case $R_{0,2}>1$ is obtained by the symmetry of the problem with respect to the indices).
We recall that
$$
\Omega_k=\{x\in\R^N:\;\Psi_k>0\},\;k=1,2,
$$
	and we denote by $\{\lambda^{\ep,n}_k\}_{n\geq 1}$ the eigenvalues of $S_k^\ep$ (and of $L_k^\ep$) ordered by decreasing modulus, so that $\lambda^{\ep,1}_k=r_\sigma(S_k^\ep)=r_\sigma(L_k^\ep)$. Next multiplying \eqref{Eq:A} by $\frac{\mu_1^\varepsilon \Theta_1}{\theta}$ and using Lemma \ref{Lemma:Decay} $(b)$ yields
$$\dfrac{\mu^\ep_1 \Theta_1 A^\ep}{\theta}-\dfrac{\Lambda \xi_1 \Theta_1 (m_\ep \star (\Psi_1 A^\ep))}{\theta}=\dfrac{\Lambda \xi_2 \mu^\ep_1 \Theta_1 (m_\ep \star (\Psi_2 A^\ep))}{\theta \mu^\ep_2}=o(\ep^\infty),$$
in $L^2(\Omega_1)$. Hence the following estimate holds
\begin{equation}\label{Eq:Norm_petito}
\left\|\left(\dfrac{\mu^\ep_1}{\theta} I-S^\ep_1\right) \Theta_1 A^\ep \right\|_{L^2(\Omega_1)}=o(\ep^\infty).
\end{equation}
On the other hand, since $S^\ep_1$ is self-adjoint, then the following estimate holds (see \textit{e.g.} \cite{ReedSimon72})
$$\left\|\left(\dfrac{\mu^\ep_1}{\theta} I-S^\ep_1\right)^{-1}\right\|_{\L(L^2(\Omega_1))}=\dfrac{1}{\text{dist}\left(\dfrac{\mu^\ep_1}{\theta}, \sigma(S^\ep_1)\right)}.$$
By setting
$$y^\ep_1:=\left(\dfrac{\mu^\ep_1}{\theta} I-S^\ep_1\right)\Theta_1 A^\ep,$$
we get
$$\Theta_1 A^\ep=\left(\dfrac{\mu^\ep_1}{\theta} I-S^\ep_1\right)^{-1}y^\ep_1, \qquad \|y^\ep_1\|_{L^2(\Omega_1)}=o(\ep^\infty),$$
so that
$$\|\Theta_1 A^\ep\|_{L^2(\Omega_1)}\leq \dfrac{\|y^\ep_1\|_{L^2(\Omega_1)}}{\text{dist}\left(\dfrac{\mu^\ep_1}{\theta}, \sigma(S^\ep_1)\right)}$$
and
\begin{equation}
\label{Eq:Dist}
\text{dist}\left(\dfrac{\mu^\ep_1}{\theta}, \sigma(S^\ep_1)\right)\leq \dfrac{\|y^\ep_1\|_{L^2(\Omega_1)}}{\|\Theta_1 A^\ep\|_{L^2(\Omega_1)}}\leq \dfrac{\sqrt{|\Omega_1|}\|y^\ep_1\|_{L^2(\Omega_1)}}{\|\Theta_1 A^\ep\|_{L^1(\Omega_1)}},
\end{equation}
	where we have used the Cauchy-Schwarz inequality in $L^2(\Omega_1)$.

To complete the proof of the lemma, we  show that the quantity $\|\Theta_1 A^\ep\|_{L^1(\Omega_1)}$ does not become too small when $\ep\to 0$.
	{Since $A^\varepsilon$ is a fixed point of $T^\varepsilon$, it follows from \eqref{eq:lowerbound} in Lemma \ref{Lemma:Prel_3} that 
	\begin{equation*}
	\int_{\mathbb R^N}\beta_1(y)A^\varepsilon(y)\dd y \geq \frac{\theta}{2}(r_\sigma(L_1^\varepsilon)-1),
	\end{equation*}
therefore we get for $\ep$ sufficiently small:
\begin{align*}
	\|\beta_1\|_{L^\infty}\|\chi_{\Sigma_1} A^\ep\|_{L^1(\R^N)}&\geq \int_{\R^N}\beta_1(y)A^\ep(y)\dd y \geq \dfrac{\theta}{2}\left(r_\sigma(L^\ep_1)-1\right).
\end{align*}}
	Next multiplying  \eqref{Eq:A} by $\frac{\mu_1^\varepsilon\chi_{\Sigma_1}}{\theta}$ and integrating leads us to 
$$\dfrac{\Lambda \xi_1}{\theta}\|\Theta_1\|_{L^\infty}\|\Theta_1 A^\ep\|_{L^1(\Omega_1)}\geq \dfrac{\mu^\ep_1 \|\chi_{\Sigma_1} A^\ep\|_{L^1(\mathbb R^N)}}{\theta}-\dfrac{\Lambda \xi_2 \mu^\ep_1}{\theta \mu^\ep_2}\iint_{\Omega_1\times\Omega_2} m_\ep(x-y)\Psi_2(y)A^\ep(y)\dd y \dd x,$$
while Lemma \ref{Lemma:Decay} ensures that
$$\iint_{\Omega_1\times \Omega_2}m_\ep(x-y)\Psi_2(y)A^\ep(y)\dd y \dd x=o(\ep^\infty).$$
As a consequence  there exist $\overline{\ep}>0$ and $\eta>0$ such that
\begin{equation*}
\|\Theta_1 A^\ep\|_{L^1(\Omega_1)}\geq \eta,\;\forall \ep\in (0,\overline\ep].
\end{equation*}
The latter estimate combined with \eqref{Eq:Dist} completes the proof of the lemma. 
\end{proof}

As a corollary of the above lemma, we also have the following result.
\begin{corollary}\label{CO1}
Let $k\in\{1,2\}$ be such that $R_{0,k}>1$. 
Then the following holds true for $\ep>0$ sufficiently small
\begin{equation}\label{Eq:Mu}
\dfrac{\mu^\ep_k}{\theta}=\lambda^{\ep,1}_k+o(\ep^\infty).
\end{equation}
\end{corollary}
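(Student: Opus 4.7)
The plan is to combine the two preceding lemmas and exploit self-adjointness to pin the nearest point of the spectrum to the top eigenvalue $\lambda_k^{\ep,1}$. First, Lemma \ref{LE1} provides the one-sided bound $\lambda_k^{\ep,1} = r_\sigma(L_k^\ep) \leq \frac{\mu_k^\ep}{\theta}$, so the quantity $\frac{\mu_k^\ep}{\theta} - \lambda_k^{\ep,1}$ is already nonnegative and it suffices to bound it above by $o(\ep^\infty)$.

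The key observation I would use next is that $S_k^\ep$ is self-adjoint on $L^2(\Omega_k)$ and $\sigma(L_k^\ep) = \sigma(S_k^\ep)$ (as noted before Lemma \ref{LE2}). Therefore every eigenvalue of $L_k^\ep$ is real and, by definition of the spectral radius, lies in the interval $[-\lambda_k^{\ep,1}, \lambda_k^{\ep,1}]$. Since $\frac{\mu_k^\ep}{\theta} \geq \lambda_k^{\ep,1}$, we obtain for every $\lambda \in \sigma(L_k^\ep)$ the inequality
$$
\left|\frac{\mu_k^\ep}{\theta} - \lambda\right| = \frac{\mu_k^\ep}{\theta} - \lambda \geq \frac{\mu_k^\ep}{\theta} - \lambda_k^{\ep,1},
$$
with equality achieved for $\lambda = \lambda_k^{\ep,1}$. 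Consequently,
$$
{\rm dist}\left(\frac{\mu_k^\ep}{\theta}, \sigma(L_k^\ep)\right) = \frac{\mu_k^\ep}{\theta} - \lambda_k^{\ep,1}.
$$

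Finally, I would invoke Lemma \ref{LE2} (which gives ${\rm dist}(\mu_k^\ep/\theta, \sigma(L_k^\ep)) = o(\ep^\infty)$) to conclude that $\frac{\mu_k^\ep}{\theta} - \lambda_k^{\ep,1} = o(\ep^\infty)$, which is exactly \eqref{Eq:Mu}. There is no serious obstacle here: the corollary is a direct structural consequence of Lemmas \ref{LE1} and \ref{LE2} together with the realness of $\sigma(L_k^\ep)$, and notably the fine spectral gap assumption \ref{Assump:gap} is not needed at this stage (it will be used later to distinguish $\lambda_k^{\ep,1}$ from $\lambda_k^{\ep,2}$ when projecting $A^\ep$ onto the principal eigenspace).
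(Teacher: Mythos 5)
Your argument is correct and in fact cleaner than the paper's own. The paper proves Corollary \ref{CO1} by contradiction: it supposes that along some sequence $\ep_i\to 0$ the point $\mu_1^{\ep_i}/\theta$ is $\ep^\infty$-close to a \emph{lower} eigenvalue $\lambda_1^{\ep_i,n_i}$ with $n_i>1$, then invokes the spectral gap Assumption \ref{Assump:gap} to get $\lambda_1^{\ep_i,1}-\lambda_1^{\ep_i,2}\geq c\,\ep_i^{n_1}$, and finally derives a contradiction with the one-sided bound $\mu_1^{\ep_i}/\theta\geq \lambda_1^{\ep_i,1}$ of Lemma \ref{LE1}. Your observation short-circuits this: since $\sigma(L_k^\ep)=\sigma(S_k^\ep)$ is real and bounded above by $\lambda_k^{\ep,1}=r_\sigma(L_k^\ep)$, and Lemma \ref{LE1} places $\mu_k^\ep/\theta$ at or above $\lambda_k^{\ep,1}$, the nearest point of the spectrum is necessarily $\lambda_k^{\ep,1}$ itself, so
\[
\mathrm{dist}\!\left(\frac{\mu_k^\ep}{\theta},\,\sigma(L_k^\ep)\right)=\frac{\mu_k^\ep}{\theta}-\lambda_k^{\ep,1},
\]
and Lemma \ref{LE2} immediately gives the conclusion. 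This is a genuinely different and more economical route: it is direct rather than by contradiction, and — as you correctly point out — it dispenses entirely with the spectral gap hypothesis at this stage (which the paper does invoke here, though it only becomes truly necessary in Lemma \ref{LE3} and onward when one must separate the principal eigenspace from the rest). Your version also clarifies that the only structural inputs to the corollary are the one-sided bound of Lemma \ref{LE1}, the realness of the spectrum, and the resolvent estimate of Lemma \ref{LE2}. One minor refinement: the paper's Lemma \ref{Lemma:Prel_1} actually gives $\sigma(S_k^\ep)\subset\mathbb{R}^+$, so the spectrum lies in $[0,\lambda_k^{\ep,1}]$ rather than merely in $[-\lambda_k^{\ep,1},\lambda_k^{\ep,1}]$; this is inconsequential for your argument, which only needs the upper bound.
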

\begin{proof}
	Here we consider the case where $R_{0,1}>1$. The case where $R_{0,2}>1$ is obtained similarly. 

	In view of Lemma \ref{LE2}, {the distance between $\frac{\mu^\varepsilon_1}{\theta} $ and the spectrum of $L_1^\varepsilon$ (which consists in the ordered sequence of eigenvalues $(\lambda_1^{\varepsilon, n})_{n\geq 1}$) is controlled by $o(\varepsilon^\infty)$. To show that $\frac{\mu_1^\varepsilon}{\theta} $ is actually \mbox{$\varepsilon^\infty$-close} to $\lambda_1^{\varepsilon, 1}$ and not to another location of the spectrum,} we argue by contradiction and assume that there exist a sequence $\{\ep_i\}\subset (0,\infty)$ going to $0$ as $i\to\infty$ and a sequence {$n_i\in\mathbb N,$ $n_i>1$} such that for all $i$ one has
\begin{equation*}
\dfrac{\mu^{\ep_i}_1}{\theta}=\lambda^{\ep_i,n_i}_1+o(\ep_i^\infty).
\end{equation*}
Firstly we have
\begin{equation*}
\dfrac{\mu^{\ep_i}_1}{\theta}=\lambda^{\ep_i,n_i}_1+o(\ep_i^\infty)\leq \lambda^{\ep_i,2}_1+o(\ep_i^\infty), \qquad \forall i\geq 0. 
\end{equation*}
Next using Assumption \ref{Assump:gap} one has $\lambda^{\ep_i,1}_1-\lambda^{\ep_i,2}_1\geq c\ep_i^{n_1}$ for all $k$ large enough,  where $c>0$ and $n_1\in\mathbb N$ are given constants independent of $i$.
This yields
\begin{equation*}
	\dfrac{\mu^{\ep_i}_1}{\theta}-{r_\sigma(L_1^{\ep_i})}= \dfrac{\mu^{\ep_i}_1}{\theta}-\lambda^{\ep_i,1}_1\leq -c\ep_i^{n_1}+o(\ep_i^\infty),\;\forall i\gg 1.
\end{equation*}
	This contradicts the estimate provided by Lemma \ref{LE1} and Corollary \ref{CO1} is proved.
\end{proof}

Our next lemma describes the asymptotic shape as $\ep\to 0$ of the fixed points in the domain $\Omega_k$, when $R_{0, k}>1$.
\begin{lemma}\label{LE3}
Let $k\in\{1,2\}$ such that $R_{0,k}>1$ and $A^\varepsilon$ be a positive solution to $T^\varepsilon A^\varepsilon=A^\varepsilon$. Then, the following estimate holds for $\ep>0$ sufficiently small:
\begin{equation}\label{Eq:Cv-Ak}
\left\|{A^\ep}-\nu^\ep_k {\phi^{\ep,1}_k}\right\|_{L^2(\Omega_k)}=o(\ep^\infty),
\end{equation}
	where  $\nu_k^\ep$  is defined in \eqref{nuk}.
\end{lemma}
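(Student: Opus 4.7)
My plan is to exploit the self-adjointness of $S_k^\ep$ on $L^2(\Omega_k)$ together with the spectral gap provided by Assumption \ref{Assump:gap}, and then transfer the resulting weighted estimate back to $A^\ep$ via the fixed-point equation. Recall from the proof of Lemma \ref{LE2} that $\sigma(S_k^\ep) = \sigma(L_k^\ep)$ and that, up to normalization, $\Theta_k\phi_k^{\ep,1}$ is the principal eigenvector of $S_k^\ep$.

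First, I would write the orthogonal decomposition in $L^2(\Omega_k)$,
\begin{equation*}
	\Theta_k A^\ep = c^\ep\,\Theta_k\phi_k^{\ep,1} + r^\ep, \qquad r^\ep \perp \Theta_k\phi_k^{\ep,1},
\end{equation*}
for some scalar $c^\ep$, and apply $\tfrac{\mu_k^\ep}{\theta}I - S_k^\ep$. Combined with \eqref{Eq:Norm_petito} this yields
\begin{equation*}
	\bigl(\tfrac{\mu_k^\ep}{\theta} - \lambda_k^{\ep,1}\bigr) c^\ep\,\Theta_k\phi_k^{\ep,1} + \bigl(\tfrac{\mu_k^\ep}{\theta}I - S_k^\ep\bigr) r^\ep = o(\ep^\infty).
\end{equation*}
The two summands are $L^2$-orthogonal because $S_k^\ep$ preserves $(\Theta_k\phi_k^{\ep,1})^\perp$, so each is $o(\ep^\infty)$; the first is consistent with Corollary \ref{CO1}, and the second, using the spectral theorem and the gap $\dist(\tfrac{\mu_k^\ep}{\theta},\sigma(S_k^\ep)\setminus\{\lambda_k^{\ep,1}\}) \geq c\,\ep^{n_k}/2$ (from Corollary \ref{CO1} and Assumption \ref{Assump:gap}), gives $\|r^\ep\|_{L^2(\Omega_k)} \leq o(\ep^\infty)/\ep^{n_k} = o(\ep^\infty)$. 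Equivalently, $\|\Theta_k(A^\ep - c^\ep\phi_k^{\ep,1})\|_{L^2(\Omega_k)} = o(\ep^\infty)$.

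The second and crucial step is removing the weight $\Theta_k = \sqrt{\Psi_k}$, which may vanish near $\partial\Omega_k$ and so cannot be inverted pointwise. I would bootstrap using the fixed-point equation: combining $A^\ep = T^\ep A^\ep$, $L_k^\ep\phi_k^{\ep,1} = \lambda_k^{\ep,1}\phi_k^{\ep,1}$, Corollary \ref{CO1}, and Lemma \ref{Lemma:Decay}$(b)$ (to discard the cross term $L_l^\ep A^\ep$ on $\Sigma_k$ for $l\neq k$), one obtains
\begin{equation*}
	A^\ep - c^\ep\phi_k^{\ep,1} = \tfrac{\theta}{\mu_k^\ep}\, L_k^\ep(A^\ep - c^\ep\phi_k^{\ep,1}) + o(\ep^\infty) \quad \text{in } L^2(\Sigma_k).
\end{equation*}
The decisive identity is
\begin{equation*}
	L_k^\ep(A^\ep - c^\ep\phi_k^{\ep,1}) = \tfrac{\Lambda\xi_k}{\theta}\, m_\ep\star\bigl(\Theta_k\cdot\Theta_k(A^\ep - c^\ep\phi_k^{\ep,1})\bigr) = \tfrac{\Lambda\xi_k}{\theta}\, m_\ep\star(\Theta_k r^\ep).
\end{equation*}
Since $\|\Theta_k r^\ep\|_{L^2} \leq \|\Theta_k\|_\infty \|r^\ep\|_{L^2} = o(\ep^\infty)$, Young's inequality ($\|m_\ep\|_{L^1}=1$) gives $\|L_k^\ep(A^\ep - c^\ep\phi_k^{\ep,1})\|_{L^2(\R^N)} = o(\ep^\infty)$ and hence $\|A^\ep - c^\ep\phi_k^{\ep,1}\|_{L^2(\Sigma_k)} = o(\ep^\infty)$. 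In essence, the convolution against $m_\ep$ built into the equation trades the inversion of the degenerate weight $\Theta_k$ for a smoothing Young estimate; this is the main obstacle.

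Finally, I would identify $c^\ep$ with $\nu_k^\ep$. Testing the previous estimate against $\beta_k$, and combining \eqref{Eq:Def_Mu}, Corollary \ref{CO1} and the definition \eqref{nuk} of $\nu_k^\ep$, gives
\begin{equation*}
	(c^\ep - \nu_k^\ep)\int_{\R^N}\beta_k(z)\phi_k^{\ep,1}(z)\,\dd z = o(\ep^\infty),
\end{equation*}
and the lower bound $\int_{\R^N}\beta_k\phi_k^{\ep,1}\,\dd z \geq C\ep^N$ established in the proof of Lemma \ref{Lemma:Decay}$(a)$ upgrades this to $c^\ep - \nu_k^\ep = o(\ep^\infty)$. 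Since $\|\phi_k^{\ep,1}\|_{L^2(\Omega_k)}$ grows at most polynomially in $\ep^{-1}$ (using $\|\phi_k^{\ep,1}\|_{L^1}=1$ and $\|\phi_k^{\ep,1}\|_\infty = O(\ep^{-N})$), a triangle inequality gives \eqref{Eq:Cv-Ak}.
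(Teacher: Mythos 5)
Your proof is correct and follows essentially the same strategy as the paper's: establish $\|(\tfrac{\mu_k^\ep}{\theta}I - S_k^\ep)\Theta_k A^\ep\|_{L^2(\Omega_k)} = o(\ep^\infty)$, use the spectral gap to conclude that $\Theta_k A^\ep$ is $o(\ep^\infty)$-close to the principal eigendirection, bootstrap through the fixed-point equation and Young's inequality to drop the degenerate weight $\Theta_k$, and finally identify the leading coefficient with $\nu_k^\ep$. The only cosmetic difference is in the second step: the paper writes the spectral projection $\Pi_1$ via a Riesz contour integral of radius $\tfrac{1}{2}|\lambda_k^{\ep,1}-\lambda_k^{\ep,2}|$ and bounds the remainder with the self-adjoint resolvent estimate, whereas you use the orthogonal decomposition $\Theta_k A^\ep = c^\ep\Theta_k\phi_k^{\ep,1} + r^\ep$ directly and invoke that $S_k^\ep$ preserves $(\Theta_k\phi_k^{\ep,1})^\perp$; these two formulations are equivalent for a self-adjoint compact operator and the gap estimate enters identically. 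Your treatment of $c^\ep$ also matches the paper's $\alpha_1^\ep\nu_1^\ep$ (you write $c^\ep=\alpha_1^\ep\nu_1^\ep$ in the paper's notation) and the identification $c^\ep-\nu_k^\ep=o(\ep^\infty)$ via integration against $\beta_k$ together with the lower bound $\int\beta_k\phi_k^{\ep,1}\gtrsim\ep^N$ reproduces the paper's argument that $\alpha_1^\ep = 1 + o(\ep^\infty)$. One small point worth making explicit in a final write-up (as the paper also leaves implicit) is that $c^\ep$ and $\|\phi_k^{\ep,1}\|_{L^2(\Sigma_k)}$ grow at most polynomially in $\ep^{-1}$ — this is needed to absorb the $o(\ep^\infty)$ error when replacing $1/\lambda_k^{\ep,1}$ by $\theta/\mu_k^\ep$ and when converting $c^\ep-\nu_k^\ep = o(\ep^\infty)$ into an $L^2$ estimate; both bounds follow from $\|\phi_k^{\ep,1}\|_{L^1}=1$, $\|m_\ep\|_{L^\infty}=\mathcal O(\ep^{-N})$, and the fixed-point equation.
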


\begin{proof}
	Here we only deal with the case $R_{0,1}>1$, the case  $R_{0,2}>1$ being similar.

	We first remark that, by definition of $\Psi_1$ (see \eqref{Eq:Psi_k}),  $\Omega_1\subset \Sigma_1$ and  $\Psi_1=\Theta_1=0$ on $\Sigma_1\setminus\Omega_1$. Observe that Corollary \ref{CO1} together with \eqref{Eq:Norm_petito} yields
\begin{equation}\label{Eq:S-Lambda}
\|(\lambda^{\ep,1}_1I -S^\ep_1 )\Theta_1 A^\ep\|_{L^2(\Omega_1)}=o(\ep^\infty),\; \ep\ll 1.
\end{equation}
Let us denote by $\Pi_1$ the positive one-dimensional rank projection on $\Ker(\lambda^{\ep,1}_1I -S^\ep_1)$. 
Consider $\Co=\Co^\ep$ a closed circle with center $\lambda^{\ep,1}_1$ and the radius $\eta_1(\ep)$ given by
$$\eta_1(\ep)=\frac{1}{2}\left|\lambda^{\ep,1}_1-\lambda^{\ep,2}_1\right|,$$
so that the resolvent $(\lambda I -S^\ep_1)^{-1}$ exists for every $\lambda\in \Co $.
	Recalling the formula for  spectral projectors \cite[Theorem 1.5.4]{Davies2007}, we obtain for $\ep$ sufficiently small:
\begin{equation*}
\begin{aligned}
\Theta_1 A^\ep-\Pi_1 (\Theta_1 A^\ep)&=\displaystyle \dfrac{1}{2i \pi}\oint_{\Co }(\lambda -\lambda^{\ep,1}_1 )^{-1}\dd\lambda \Theta_1  A^\ep-\dfrac{1}{2 i \pi}\oint_{\Co }(\lambda -S^\ep_1)^{-1}\dd\lambda \Theta_1 A^\ep \vspace{0.1cm} \\
&=\displaystyle \dfrac{1}{2i\pi}\oint_{\Co }(\lambda -S^\ep_1)^{-1}(\lambda -\lambda^{\ep,1}_1 )^{-1}(S^\ep_1-\lambda^{\ep,1}_1 )\Theta_1 A^\ep \dd\lambda.
\end{aligned}
\end{equation*}
As a consequence, since $S_1^\ep$ is self-adjoint, we obtain the following estimate:
\begin{equation*}
\begin{aligned}
\|\Theta_1 A^\ep-\Pi_1 (\Theta_1 A^\ep)\|_{L^2(\Omega_1)}&\leq \left(\dfrac{1}{\eta_1(\ep)}\right)^2 \|(\lambda^{\ep,1}_1 -S^\ep_1)(\Theta_1 A^\ep)\|_{L^2(\Omega_1)}\\
&\leq \left(\dfrac{2}{\left|\lambda^{\ep,1}_1-\lambda^{\ep,2}_1\right|}\right)^2 \|(\lambda^{\ep,1}_1 -S^\ep_1)\Theta_1 A^\ep\|_{L^2(\Omega_1)}.
\end{aligned}
\end{equation*}
Now recall that the spectral gap $\lambda^{\ep,1}_1-\lambda^{\ep,2}_1$ is at most polynomial (see Assumption \ref{Assump:gap}), so that \eqref{Eq:S-Lambda} leads us to the following estimate 
\begin{equation}\label{pq}
\|\Theta_1 A^\ep-\Pi_1 (\Theta_1 A^\ep)\|_{L^2(\Omega_1)}=o(\ep^\infty),\; \ep\ll 1.
\end{equation}
We remind that $(\lambda^{\ep,1}_1, \phi^{\ep,1}_1)$ is the principal eigenpair of $L^\ep_1$. Hence $(\lambda^{\ep,1}_1, \Theta_1 \phi^{\ep,1}_1)$ becomes the principal eigenpair of $S^\ep_1$ and the spectral projector $\Pi_1$ is given by
$$
\Pi_1(\varphi)=\|\Theta_1\phi_1^{\ep,1}\|_{L^2(\Omega_1)}^{-2}\Theta_1 \phi_1^{\ep,1}\left(\int_{\Omega_1} \Theta_1(x)\phi_1^{\ep,1}(x)\varphi(x)\dd x\right).
$$
Since $\Theta_1=0$ on $\Sigma_1\setminus\Omega_1$, \eqref{pq} becomes
\begin{equation}\label{Eq:A_kTheta_k}
\|\Theta_1 A^\ep-\alpha^\ep_1 \nu^\ep_1 \Theta_1 \phi^{\ep,1}_1\|_{L^2(\Omega_1)}=o(\ep^\infty)
\end{equation}
{where $\alpha^\ep_1>0$ is the constant defined by
\begin{equation}\label{Eq:Alphak}
\alpha^\ep_1:=\left(\dfrac{\|\Theta_1 \phi^{\ep,1}_1\|^{-2}_{L^2(\Omega_1)}}{\nu^\ep_1}\right)\int_{\Omega_1}\Theta_1^2(x)\phi^{\ep,1}_1(x)A^\ep(x)\dd x>0
\end{equation}
and will be investigated below.} Note now that since $A^\ep$ is uniformly bounded in $L^1(\R^N)$, then \eqref{Eq:A} together with Lemma \ref{Lemma:Decay} $(b)$ yield 
	$$\chi_{\Omega_1}\left(\theta+\int_{\R^N}\beta_1(z)A^\ep(z)\dd z\right)A^\ep=\Lambda \xi_1 \chi_{\Omega_1}(m_\ep \star (\Psi_1 A^\ep))+o(\ep^\infty),\;\forall \ep\ll 1.$$
Next we deduce from the above equality that, for $\ep$ sufficiently small,
\begin{multline*}
\left\|\left(\theta+\int_{\R^N}\beta_1(z)A^\ep(z)\dd z\right)A^\ep-\Lambda \xi_1 \left(m_\ep \star (\Psi_1 \alpha^\ep_1 \nu^\ep_1 \phi^{\ep,1}_1)\right)\right\|_{L^2(\Omega_1)} \\
	\begin{aligned}
		&\leq \left\|\Lambda \xi_1 \left(m_\ep \star (\Psi_1 A^\ep-\Psi_1 \alpha^\ep_1 \nu^\ep_1 \phi^{\ep,1}_1)\right)\right\|_{L^2(\Omega_1)}+o(\ep^\infty) \\
		&\leq \Lambda \xi_1  \|m_\ep\|_{L^1(\R^N)} \|\Theta_1\|_{L^\infty}\left\|\Theta_1 A^\ep-\alpha^\ep_1 \nu^\ep_1 \Theta_1 \phi^{\ep,1}_1\right\|^2_{L^2(\Omega_1)}+o(\ep^\infty),
	\end{aligned}
\end{multline*}
so that \eqref{Eq:A_kTheta_k} implies that
\begin{equation}\label{Eq:A_k-Beta_k}
\left\|\left(\theta+\int_{\R^N}\beta_1(z)A^\ep(z)\dd z\right)A^\ep-\Lambda \xi_1 \left(m_\ep \star (\Psi_1 \alpha^\ep_1 \nu^\ep_1 \phi^{\ep,1}_1)\right)\right\|_{L^2(\Omega_1)}=o(\ep^\infty).
\end{equation}
The above equality also rewrites as follows
\begin{multline}\label{Eq:A_k-Beta_k-bis}
\left\|\left(\theta+\int_{\R^N}\beta_1(z)A^\ep(z)\dd z\right)A^\ep-\alpha_1^\ep\nu_1^\ep \theta L_1^\ep (\phi^{\ep,1}_1)\right\|_{L^2(\Omega_1)}\\
=\left\|\left(\theta+\int_{\R^N}\beta_1(z)A^\ep(z)\dd z\right)A^\ep-\alpha_1^\ep\nu_1^\ep \theta \lambda_1^{\ep,1}{\phi^{\ep,1}_1}\right\|_{L^2(\Omega_1)}=o(\ep^\infty).
\end{multline}
On the other hand we deduce from \eqref{Eq:Def_Mu} and \eqref{Eq:Mu} that
\begin{equation}\label{Eq:beta_k-A_ep}
\theta \lambda^{\ep,1}_1=\theta+\int_{\R^N}\beta_1(z)A^\ep(z)\dd z+o(\ep^\infty),
\end{equation}
so that \eqref{Eq:A_k-Beta_k-bis} becomes
\begin{equation}\label{Eq:esti_lambda}
\left\|\theta \lambda^{\ep,1}_1 A^\ep-\theta \lambda^{\ep,1}_1 \alpha^\ep_1 \nu^\ep_1 {\phi^\ep_1}\right\|_{L^2(\Omega_1)}=o(\ep^\infty).
\end{equation}
Since $\lambda^{\ep,1}_1\to R_{0,1}>1$ as $\ep\to 0$, {then 
$$\lambda^{\ep,1}_1\geq \dfrac{R_{0,1}}{2}, \quad \forall \ep\ll 1.$$
It follows from \eqref{Eq:esti_lambda} that
$$\dfrac{\theta R_{0,1}}{2}\|A^\ep-\alpha^\ep_1 \nu^\ep_1 {\phi^{\ep,1}_1}\|_{L^2(\Omega_1)}\leq 
\left\|\theta \lambda^{\ep,1}_1 A^\ep-\theta \lambda^{\ep,1}_1 \alpha^\ep_1 \nu^\ep_1 {\phi^\ep_1}\right\|_{L^2(\Omega_1)}=
o(\ep^\infty),\;\forall \ep\ll 1.$$
whence
\begin{equation}\label{Eq:Omega_k}
\|A^\ep-\alpha^\ep_1 \nu^\ep_1 {\phi^{\ep,1}_1}\|_{L^2(\Omega_1)}=
o(\ep^\infty),\;\forall \ep\ll 1.
\end{equation}
}

To complete the proof of the lemma, it remains to show that $\alpha_1^\ep$ is close to $1$ when $\ep\to 0$. In the following we check that $\alpha_1^\ep=1+o(\ep^\infty)$ as $\ep\to 0$. {To do so, we first see that \eqref{nuk} rewrites as
\begin{equation}\label{Eq:Alpha_1}
-\alpha^\ep_1{\theta(\lambda_1^{\ep, 1}-1)}=-\alpha^\ep_1\int_{\mathbb R^N}\beta_1(z)\nu_1^\ep\phi_1^{\ep, 1}(z)\dd z
\end{equation}
and \eqref{Eq:beta_k-A_ep} can be rewritten as:
\begin{equation}\label{Eq:Alpha_2}
\theta(\lambda^{\ep,1}_1-1)=\int_{\R^N}\beta_1(z)A^\ep(z)\dd z+o(\ep^\infty).
\end{equation}
Summing \eqref{Eq:Alpha_1} and \eqref{Eq:Alpha_2}, we  get:
$$\theta(1-\alpha^\ep_1) (\lambda^{\ep,1}_1-1)=\int_{\R^N}\beta_1(z)\left(A^\ep(z)-\alpha^\ep_1 \nu^\ep_1  \phi^{\ep,1}_1(z)\right)\dd z+o(\ep^\infty).$$
Using \eqref{Eq:Omega_k} and since $\lambda^{\ep,1}_1-1\to R_{0,1}-1>0$ as $\ep\to 0$, the latter equation leads to}
$$1-\alpha^\ep_1=o(\ep^\infty)\text{ as $\ep\to 0$}$$
which completes the proof of the Lemma.
\end{proof}

Equipped with the above lemmas we are now in the position to complete the proof of Theorem~\ref{Thm:shape}.

\begin{proof}[Proof of Theorem \ref{Thm:shape}]
We split our argument into two parts.
We first consider the case where $R_{0,1}>1$ and $R_{0,2}>1$ and show that the result directly follows from Lemma \ref{LE3}. In a second step we investigate the case where $R_{0,1}>1$ and $R_{0,2}\leq 1$. Using the symmetry of the problem with respect to the indices, this covers all the possible cases. \medskip

\noindent {\bf First case:} We suppose that $R_{0,1}>1$ and $R_{0,2}>1$.
In this case, Lemma \ref{LE3} applies and ensures that
\begin{equation}\label{Eq:Conv}
\|{A^\ep}_{\mid \Omega_k}-\nu^\ep_k {\phi^{\ep,1}_k}_{\mid \Omega_k}\|_{L^2(\Omega_k)}=o(\ep^\infty),\;\forall \ep\ll 1
\end{equation}
for each $k\in\{1,2\}$. Moreover, since $A^\ep$ is a fixed point of $T^\ep$, we have
\begin{equation}\label{Eq:Aep}
A^\ep(x)=\Lambda \int_{\R^N}m_\ep(x-y)\left(\dfrac{\xi_1 \Psi_1(y)}{\theta+\int_{\mathbb R^N}\beta_1(s)A^\ep(s)\dd s}+\dfrac{\xi_2 \Psi_2(y)}{\theta+\int_{\mathbb R^N}\beta_2(s)A^\ep(s)\dd s}\right)A^\ep(y)\dd y
\end{equation}
for every $x\in \R^N$. It follows from \eqref{ei} that
$$\nu^\ep_k\phi^{\ep,1}_k(x)=\dfrac{\Lambda}{\theta\lambda^{\ep,1}_k}\int_{\Omega_k}m_\ep(x-y)\xi_k\Psi_k(y)\nu^\ep_k\phi^{\ep,1}_k(y)\dd y$$
for each $k\in\{1,2\}$, where we recall that $\Omega_k=\{\Psi_k>0\}$ and $\Omega=\Omega_1\sqcup \Omega_2$. Injecting the latter equation into \eqref{Eq:Aep} leads to
\begin{flalign*}
&\int_{\R^N\setminus \Omega}\left|A^\ep-\nu_1^\ep \phi^{\ep,1}_1-\nu^\ep_2 \phi^{\ep,1}_2\right|(x)\dd x \\
&\leq \sum_{k=1,2}\dfrac{\Lambda}{\theta \lambda^{\ep,1}_k}\int_{\R^N\setminus \Omega} \int_{\R^N}m_\ep(x-y)\xi_k \Psi_k(y)\left|\left(\dfrac{\theta \lambda^{\ep,1}_kA^\ep(y)}{\theta+\int_{\mathbb R^N}\beta_k(s)A^\ep(s)\dd s}\right)-\nu^\ep\phi^{\ep,1}_k(y)\right|\dd y \dd x.
\end{flalign*}
We then infer from \eqref{Eq:beta_k-A_ep} that
\begin{flalign*}
\dfrac{\theta \lambda^{\ep,1}_k}{\theta+\int_{\mathbb R^N}\beta_k(s)A^\ep(s)\dd s}=1+o(\ep^\infty)\text{ for each $k\in\{1,2\}$}.
\end{flalign*}
Recalling that $\lambda^{\ep,1}_k\to R_{0,k}>1$ as $\ep\to 0$, and that the family $\{A^\ep\}_{\ep>0}$ is uniformly bounded in $L^1(\R^N)$ (see Theorem \ref{Thm:main}), one deduces that
$$
\int_{\R^N\setminus \Omega} \left|A^\ep-\nu^\ep_1 \phi^{\ep,1}_1- \nu^\ep_2 \phi^{\ep,1}_2\right|(x)\dd x\leq
\dfrac{\Lambda}{M\theta}\sum_{k=1,2}\|\Psi_k\|_{L^\infty}\|{A^\ep}_{\mid \Omega_k}-\nu^\ep_k {\phi^{\ep,1}_k}_{\mid \Omega_k}\|_{L^1(\Omega_k)}
+o(\ep^\infty)=o(\ep^\infty)
$$
for some constant $M>0$. Here we have used \eqref{Eq:Conv}. 
	
Finally, since $\Vert \chi_{\Omega_1}\phi^{\ep, 1}_2\Vert_{L^1(\R^N)}=o(\ep^\infty)$ and $\Vert \chi_{\Omega_2}\phi^{\ep, 1}_1\Vert_{L^1(\R^N)}=o(\ep^\infty)$, we obtain  
$$
\Vert A^\ep-(\nu^\ep_1\phi^{\ep, 1}_1+\nu^\ep_2\phi^{\ep, 1}_2)\Vert_{L^1(\mathbb R^N)}=o(\ep^\infty),
$$
that proves the result in the case where $R_{0,1}>1$ and $R_{0,2}>1$.

\medskip

\noindent{\bf Second case:} We assume now that $R_{0,1}>1$ and $R_{0,2}\leq 1$.
Note that Lemma \ref{LE3} applies and ensures that \eqref{Eq:Conv} holds for {$k=1$}.
From Lemma \ref{Lemma:Decay} (b) and \eqref{Eq:Aep}, we get
\begin{flalign}\label{Eq:A_ep-petito}
\displaystyle \int_{\Omega_2}A^\ep(x)\dd x&=\dfrac{\Lambda \xi_2}{\theta+\int_{\mathbb R^N}\beta_2(s)A^\ep(s)\dd s}\int_{\Omega_2}\int_{\Omega_2}m_{\ep}(x-y)\Psi_2(y)A^\ep(y)\dd y \dd x+o(\ep^\infty)\\
&\leq \dfrac{\theta R_{0,2}\int_{\Omega_2}A^\ep(y)\dd y}{\theta+\int_{\mathbb R^N}\beta_2(s)A^\ep(s)\dd s}+o(\ep^\infty). \nonumber
\end{flalign}
It follows that
\begin{equation}\label{Eq:A_l}
\left(1-\dfrac{\theta R_{0,2}}{\theta+\|\beta_2 A^\ep\|_{L^1(\mathbb R^N)}}\right)\int_{\Omega_2} A^\ep(x)\dd x=o(\ep^\infty).
\end{equation}
Now we prove that the following estimate holds
\begin{equation}\label{Eq:A_l2}
	\int_{\Omega_2}A^\ep(x)\dd x=o(\ep^\infty)
\end{equation}
When $R_{0,2}<1$, \eqref{Eq:A_l} implies:
	\begin{equation*}
		(1-R_{0, 2})\int_{\Omega_2}A^\ep(x)\dd x \leq \left(1-\dfrac{R_{0, 2}}{1+\theta^{-1}\int_{\R^N}\beta_2(z)A_2^\ep(z)\dd z}\right)\int_{\Omega_2}A^\ep(x) \dd x=o(\varepsilon^\infty)
	\end{equation*}
	hence \eqref{Eq:A_l2} holds. 

Now suppose that $R_{0,2}=1$. From \eqref{Eq:A_l}, we see that
	$$\dfrac{\left(\int_{\Omega_2}\beta_2(x) A^\ep(x)\dd x\right)^2}{\Vert \beta_2\Vert_{L^\infty}\left(\theta+M\|\beta_2\|_{L^\infty}\right)}\leq \dfrac{\int_{\mathbb R^N}\beta_2(x) A^\ep(x)\dd x\int_{\Omega_2}A^\ep(x)\dd x
}{\theta+\|\beta_2 A^\ep\|_{L^1(\mathbb R^N)}}=o(\ep^\infty)$$
for some constant $M>0$ such that $\|A^\ep\|_{L^1(\R^N)}\leq M$ for all $\ep$ small. Therefore, we have  
\begin{equation}\label{Eq:beta}
\int_{\Omega_2}\beta_2(x)A^\ep(x)\dd x=o(\ep^\infty).
\end{equation}
Next \eqref{Eq:A_ep-petito} allows us to  control the quantity $\int_{\Omega_2}A^\ep(z)\dd z$ by $\int_{\Omega_2}\beta_2(z)A^\ep(z)\dd z$ as follows
\begin{flalign*}
\int_{\Omega_2}A^\ep(x)\dd x&\leq \dfrac{\Lambda \xi_2 \|r_2\|_{L^\infty}}{\delta \theta} \int_{\Omega_2}\beta_2(x)A^\ep(x)\dd x+o(\ep^\infty)
\end{flalign*}
and therefore \eqref{Eq:A_l2} holds. \medskip

To complete the proof of the theorem, it remains to show that  
$$\int_{\R^N\setminus \Omega} \left|A^\ep(z)-\nu^\ep_1 \phi^{\ep,1}_1(z)\right|\dd z= o(\ep^\infty).$$
To this end, we follow the proof of the first case to obtain
\begin{flalign*}
&\int_{\R^N\setminus \Omega}\left|A^\ep-\nu_1^\ep \phi^{\ep,1}_1\right|(x)\dd x\leq\dfrac{\Lambda \xi_2}{\theta+\int_{\mathbb R^N}\beta_2(x)A^\ep(x)dx}\|\Psi_2\|_{L^\infty}\int_{\Omega_2}A^\ep(x)dx \\
	&\qquad + \dfrac{\Lambda}{\theta \lambda^{\ep,1}_1}\int_{\R^N\setminus \Omega} \int_{\R^N}m_\ep(x-y)\xi_1 \Psi_1(y)\left|\left(\dfrac{\theta \lambda^{\ep,1}_1A^\ep(y)}{\theta+\int_{\mathbb R^N}\beta_1(s)A^\ep(s)\dd s}\right)-\nu^\ep\phi^{\ep,1}_1(y)\right|\dd y \dd x.
\end{flalign*}
From \eqref{Eq:A_l2}, we deduce that
$$
\int_{\R^N\setminus \Omega} \left|A^\ep-\nu^\ep_1 \phi^{\ep,1}_1\right|(x)\dd x\leq
\dfrac{\Lambda}{\theta}\|\Psi_1\|_{L^\infty}\|A^\ep-\nu^\ep_1 \phi^{\ep,1}_1\|_{L^1(\Omega_1)}
+o(\ep^\infty)=o(\ep^\infty),
$$
by using the fact that \eqref{Eq:Conv} holds for {$k=1$}, this concludes the proof of this second case and thus the proof of Theorem \ref{Thm:shape}.
\end{proof}

\section{Proof of Theorem \ref{Thm:uniqueness} } \label{Sec:Thm3}

In this section we handle the uniqueness of the endemic steady state for $\ep$ sufficiently small and we prove Theorem \ref{Thm:uniqueness}. To this end, we use degree theory (see \textit{e.g.} \cite{Brown2014, Zei-86}).  

Our strategy is as follows: we first derive estimates for the eigenvalues of the linearised equation around each stationary solution for all $\varepsilon>0$ small enough. In particular we show that  every positive stationary solution is locally stable for the discrete dynamical system generated by $T^\ep$.  Next, we compute the Leray-Schauder degree of the (nonlinear) operator in a subset of the positive cone which contains all the positive fixed {points}, and show that it is equal to one.  Because of the additivity property of the Leray-Schauder degree, these two arguments combined together show that there cannot be more than one stationary solution.

Recall that $T^\ep=T_1^\ep+T_2^\ep $ (see the definitions \eqref{T} and \eqref{eq:Tk}). In this section, in order to work in a solid cone of a Banach space, we will be mainly interested in some properties of $T^\ep$, $T_1^\ep$ and $T_2^\ep$ considered as operators acting on $\Co(\Sigma)$, $\Co(\Sigma_1)$ and $\Co(\Sigma_2)$, where, according to Assumption \ref{Assump:supports}, $\Sigma_1$ and $\Sigma_2$ are defined in \eqref{support} while $\Sigma$ denote the compact set given by
$$
\Sigma=\Sigma_1 \sqcup \Sigma_2.
$$ 
Recall also that $\Omega_k=\{\Psi_k>0\}$ and $\Omega=\Omega_1\sqcup \Omega_2$. And note that due to the definition of $\Psi_k$ in \eqref{Eq:Psi_k} one has $\Omega\subset\Sigma$ and $\Omega_k\subset \Sigma_k$ for each $k\in\{1,2\}$.

We will use the fact that the fixed-points of $T^\varepsilon$ are close to the fixed-point of the uncoupled problem
\begin{equation}\label{eq:uncoupled}
	A^{\ep, *}:=A_1^{\ep, *}+A_2^{\ep, *},
\end{equation}
where for each $k=1,2$, $A_k^{\ep, *} \in L^1(\R^N)\cap\Co_b(\R^N)$ is the unique nontrivial solution of $T_{k}^\ep A_k^{\ep, *}=A_k^{\ep, *}$ if $R_{0, k}>1$ and $A_k^{\ep, *}\equiv 0$ otherwise. 

	Recall finally that the spectra of $L_1^\ep$ and $L_2^\ep$, considered as bounded operators on  $L^p(\Omega_k)$, $L^p(\R^N)$ with $1\leq p<\infty$, or $\Co(\Sigma_k)$, consist in  a real sequence of decreasing eigenvalues, independent of the space considered (see Lemma \ref{Lemma:Prel_1}),  which we denote 
$$\sigma(L^\ep_k)=\{\lambda_k^{\ep, n}, n\geq 1\},\quad k=1,2.$$

\begin{lemma}[Computation of the spectrum]\label{Lemma:Spec}
	Assume that $R_{0,1}>1$ and   that  one of the following properties is satisfied: 
	\begin{itemize}
		\item either $R_{0,2}\neq1$, 
		\item or $R_{0,2}=1 $ and the convergence of $r_\sigma(L_2^\ep)$ is at most polynomial for small $\ep$, namely $r_\sigma(L_2^\ep)\leq 1-C\ep ^M$ for some constants $C>0$ and $M>0$.
	\end{itemize}
	Then, there exists $\ep_0>0$ such that for any $0<\ep\leq \ep_0$ and for any nonnegative nontrivial fixed point $A^{\ep}\in \Co(\Sigma)$ of $T^{\ep}$, we have
\begin{equation*}\label{Eq:Spec}
\sigma(D_{A^{\ep}} T^{\ep})\subset (-1,1),
\end{equation*}
	wherein $D_{A^\ep}T^\ep $ denotes the Fréchet derivative of $T^\ep$ with respect to the $\Co(\Sigma)-$topology.
\end{lemma}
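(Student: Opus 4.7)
The plan is to compute the Fréchet derivative $D_{A^\ep}T^\ep$ explicitly, reduce it modulo an $o(\ep^\infty)$ operator-norm perturbation to a block-diagonal operator on $\Co(\Sigma_1)\oplus\Co(\Sigma_2)$, and then study the spectrum of each diagonal block.

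Setting $\mu_k^\ep := \theta + \int \beta_k A^\ep$ and $A_k^\ep := \theta L_k^\ep A^\ep/\mu_k^\ep$ (so that $A^\ep = A_1^\ep + A_2^\ep$ by the fixed-point equation), a direct differentiation of \eqref{T} gives
\begin{equation*}
D_{A^\ep}T^\ep \cdot \psi = \sum_{k=1,2}\frac{\theta}{\mu_k^\ep}L_k^\ep\psi \;-\; \sum_{k=1,2}\frac{A_k^\ep}{\mu_k^\ep}\int_{\R^N}\beta_k(z)\psi(z)\,\dd z.
\end{equation*}
I would then exploit that $\supp \beta_k\subset\Sigma_k$ (so $\int\beta_k\cdot$ vanishes on $\Co(\Sigma_l)$ when $l\neq k$) and that $\Vert \chi_{\Sigma_l}L_k^\ep\Vert = o(\ep^\infty)$ for $l\neq k$ by Lemma \ref{Lemma:Decay}(b), together with $\Vert A_k^\ep - A_k^{\ep,*}\Vert_{L^1(\R^N)}=o(\ep^\infty)$ given by Theorem \ref{Thm:shape}, to conclude that in the decomposition $\Co(\Sigma)=\Co(\Sigma_1)\oplus\Co(\Sigma_2)$ the off-diagonal blocks of $D_{A^\ep}T^\ep$ are $o(\ep^\infty)$ in operator norm, the diagonal blocks being
\begin{equation*}
M_{kk}^\ep\psi_k = \frac{\theta}{\mu_k^\ep}\chi_{\Sigma_k}L_k^\ep\psi_k - \frac{\chi_{\Sigma_k}A_k^{\ep,*}}{\mu_k^\ep}\int_{\Sigma_k}\beta_k\psi_k\,\dd z.
\end{equation*}

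Next I would analyze each block. When $R_{0,k}>1$, Corollary \ref{CO1} gives $\mu_k^\ep = \theta\lambda_k^{\ep,1}+o(\ep^\infty)$ and Lemma \ref{LE-one-host} gives $A_k^{\ep,*}=\nu_k^\ep\phi_k^{\ep,1}$ with $\nu_k^\ep\int\beta_k\phi_k^{\ep,1}=\theta(\lambda_k^{\ep,1}-1)$, so $M_{kk}^\ep$ is, up to $o(\ep^\infty)$, a rank-one perturbation of $L_k^\ep/\lambda_k^{\ep,1}$. A direct computation verifies $M_{kk}^\ep\phi_k^{\ep,1}=(1/\lambda_k^{\ep,1})\phi_k^{\ep,1}+o(\ep^\infty)$, which places $1/\lambda_k^{\ep,1}\in(0,1)$ in the spectrum. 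For any other eigenpair $(\mu,\psi)$, the identity $(L_k^\ep-\mu\lambda_k^{\ep,1})\psi=(\nu_k^\ep/\theta)\phi_k^{\ep,1}\int\beta_k\psi$ together with the Fredholm alternative and the self-adjoint similarity of Lemma \ref{LE2} (which ensures the eigenvalues of $L_k^\ep$ are real with $\lambda_k^{\ep,1}$ simple and $|\lambda_k^{\ep,j}|<\lambda_k^{\ep,1}$ for $j\geq 2$) force $\mu\in\{\lambda_k^{\ep,j}/\lambda_k^{\ep,1}:j\geq 2\}$, all of which satisfy $|\mu|<1$. Hence $\sigma(M_{kk}^\ep)\subset (-1,1)$ with a spectral gap bounded away from $1$.

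When $R_{0,k}\leq 1$ (only applicable to $k=2$ under the hypotheses), $A_k^{\ep,*}\equiv 0$, and Theorem \ref{Thm:shape} combined with Lemma \ref{Lemma:Decay}(a) yields $\int\beta_k A^\ep=o(\ep^\infty)$, hence $\mu_k^\ep/\theta=1+o(\ep^\infty)$. Thus $M_{kk}^\ep=\chi_{\Sigma_k}L_k^\ep+o(\ep^\infty)$ in operator norm, with spectral radius converging to $r_\sigma(L_k^\ep)\to R_{0,k}\leq 1$. If $R_{0,2}<1$ this directly yields $\sigma(M_{22}^\ep)\subset(-1,1)$ for $\ep$ small; if $R_{0,2}=1$, the polynomial decay assumption $r_\sigma(L_2^\ep)\leq 1-C\ep^M$ provides a polynomial gap that dominates the $o(\ep^\infty)$ perturbation. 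The upper semicontinuity of the spectrum of compact operators under operator-norm convergence then transfers the spectral inclusion from the block-diagonal operator to $D_{A^\ep}T^\ep$ for $\ep$ sufficiently small. I expect the main obstacle to be the borderline case $R_{0,2}=1$, where the polynomial spectral gap must be quantitatively balanced against the perturbation size; the rank-one spectral analysis in the $R_{0,k}>1$ blocks is also delicate, requiring careful handling of generalized eigenspaces and the simplicity of the principal eigenvalue to prevent eigenvalues from crossing the unit circle.
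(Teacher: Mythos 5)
Your overall strategy (differentiate $T^\ep$, split $\Co(\Sigma)$ into $\Co(\Sigma_1)\oplus\Co(\Sigma_2)$, show the off-diagonal coupling and the replacement $A^\ep_k\rightsquigarrow A_k^{\ep,*}$ are negligible, then analyze each diagonal block) is in the right spirit, and your explicit rank-one analysis of the diagonal blocks matches the paper's Step one. However, the transfer step — ``the upper semicontinuity of the spectrum of compact operators under operator-norm convergence then transfers the spectral inclusion'' — is a genuine gap. Upper semicontinuity is a qualitative statement about a fixed limit operator; here both the block-diagonal reference operator and the relevant spectral gap depend on $\ep$. The gap $1-\lambda_k^{\ep,2}/\lambda_k^{\ep,1}$ is only of polynomial order $c\ep^{n_k}$ by Assumption~\ref{Assump:gap} (not ``bounded away from 1'' as you write), so you must argue that a polynomial gap beats an $o(\ep^\infty)$ perturbation. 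That is plausible, but it is not automatic for non-self-adjoint compact operators: the sensitivity of an eigenvalue under a perturbation of size $\delta$ is controlled not by $\delta$ but by $\delta\kappa$, where $\kappa$ is the condition number of the associated spectral projector. Your block $M_{kk}^\ep$ is a rank-one, non-symmetric perturbation of $L_k^\ep/\lambda_k^{\ep,1}$; the rank-one coefficient $\nu_k^\ep$ is of order $\ep^{-N}$, so the eigenvector basis of $M_{kk}^\ep$ is potentially ill-conditioned, and $\kappa$ must be estimated. Your proposal does not do this, and simply invoking upper semicontinuity hides exactly the place where the argument could fail. The paper sidesteps this entirely: rather than perturbing a reference operator, it takes an eigenpair $(\lambda^\ep,h^\ep)$ of $D_{A^\ep}T^\ep$, derives the residual bound $\|(D_{A_k^{\ep,*}}T_k^\ep-\lambda^\ep)h_k^\ep\|_{\Co(\Sigma_k)}=o(\ep^\infty)$ (Step two), expands $h_k^\ep$ in the orthonormal eigenbasis of $L_k^\ep$ in $L^2_{\Psi_k}$ where the rank-one part only feeds back into the $\phi_k^{\ep,1}$-coordinate, and then concludes by testing against $\phi_k^{\ep,1}$ (Step three). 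This exploits the triangular structure of the perturbation directly and never needs a resolvent or condition-number estimate.

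A smaller but real issue: to get an $o(\ep^\infty)$ \emph{operator-norm} bound on the difference between $D_{A^\ep}T^\ep$ and the block-diagonal reference, you need $\|A_k^\ep-A_k^{\ep,*}\|_{L^\infty(\Sigma_k)}=o(\ep^\infty)$, not just the $L^1$ bound from Theorem~\ref{Thm:shape}. This does follow (push the $L^1$ estimate through one application of $m_\ep\star(\Psi_k\,\cdot)$, paying the harmless factor $\|m_\ep\|_{L^\infty}=O(\ep^{-N})$, as the paper does in Step two around \eqref{Proof:Step2_4}), but you should say so explicitly rather than cite the $L^1$ statement as if it directly gave the operator norm estimate.
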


\begin{proof}
	We divide the proof into three steps.
\medskip

	\noindent\textbf{Step one:} We show that
	\begin{equation*}
		\sigma(D_{A_k^{\ep,*}}T^{\ep}_{k})=\{0\}\cup \left\{\dfrac{\lambda^{\ep,n}_k}{\lambda^{\ep,1}_k}\right\}_{n\geq 2} \cup \left\{\dfrac{1}{\lambda^{\ep,1}_k}\right\}, \qquad \forall k \in\{1,2\},
	\end{equation*}
	if $R_{0,k}>1$, and 
	\begin{equation*}
		\sigma(D_{A_k^{\ep,*}}T^{\ep}_{k})=\sigma(L_k^\ep)=\{0\}\cup \left\{\lambda^{\ep,n}_k\right\}_{n\geq 1} 
	\end{equation*}
	otherwise, 
	where $A_k^{\ep, *} $ is the solution to the uncoupled problem $T^\ep_kA^{\ep, *}_k=A^{\ep, *}_k$ while $D_{A_k^{\ep,*}}T^{\ep}_{k}$ is the Fréchet differential of $T^\ep_k$ for the $\Co(\Sigma_k)$ topology.\medskip

	Let us consider the case $R_{0, k}>1$. We first recall that $T^\ep_k$ is compact as $0\le T^\ep_k \le L^\ep_k$ and as $L^\ep_k$ is compact by  Lemma \ref{Lemma:Prel_1}, then its Fréchet differential is also compact and its spectrum is consequently identical to its point spectrum.  Let $k\in\{1,2\}$ be given and let $L^2_{\Psi_k}(\Omega_k)$ be the weighted $L^2$ space defined by the inner product $\langle f,g\rangle_{{\Psi_k}}:=\int_{\Omega_k}f(z)g(z)\Psi_k(z)\dd z$. Since ${L_k^\ep}_{\mid L^2_{\Psi_k}(\Omega_k)}$ is self-adjoint in the space $L^2_{\Psi_k}(\Omega_k)$, there exists an Hilbert basis of $L^2_{\Psi_k}(\Omega_k)$ composed of eigenfunctions of the operator ${L_k^\ep}_{\mid L^2_{\Psi_k}(\Omega_k)}$, which we denote  $\{\phi^{\ep,n}_k\}_{n\geq 1}$, and related to the sequence of eigenvalues $\{\lambda^{\ep,n}_k\}_{n\geq 1}$. Observe that
	$$\forall f\in \Co(\Sigma_k) : {f}_{\mid \Omega_k} \in L^2_{\Psi_k}(\Omega_k)$$
	since $\Psi_k\in L^\infty(\R^N)$ and $\Sigma_k$ is compact. Observe also that, contrary to the previous sections, here $\phi^{\ep, 1}_k$ is not normalized in $L^1(\mathbb R^N)$ but in $L^2_{\Psi_k}(\Omega_k)$, namely $\Vert \phi^{\ep, 1}_k\Vert_{L^2_{\Psi_k}(\Omega_k)}=1$.

	Moreover, every $\phi^{\ep, n}_k$  can be extended to a function in $L^1(\mathbb R^N)\cap \Co_b(\mathbb R^N)$ by the identity:
$${\phi}^{\ep,n}_k(x):=\frac{1}{\lambda^{\ep,n}_k}\int_{\Omega_k}m_\ep(x-y)\Psi_k(y)\phi^{\ep,n}_k(y)\dd y , \quad x\in \mathbb R^N\setminus \Omega_k.$$
Let $h\in \Co(\Sigma_k)$ be given. Then  we have 
	\begin{equation*}
		D_{A_k^{\ep,*}}T^{\ep}_{k}h=\dfrac{{L_k^\ep} h}{1+\theta^{-1}\int_{\R^N}\beta_k(y)A_k^{\ep, *}(y)\dd y}-\dfrac{{L_k^\ep} A_k^{\ep, *}}{\left(1+\theta^{-1}\int_{\R^N}\beta_k(y)A_k^{\ep, *}(y)\dd y\right)^2} \frac{\int_{\R^N} \beta_k(y)h(y)\dd y}{\theta}.
	\end{equation*}
		 Recalling that $1+\theta^{-1}\int_{\mathbb R^N}\beta_k(z)A_k^{\ep, *}(z)\dd z=\lambda_k^{\ep, 1}$ and that $A^{\ep, *}_k=\theta\frac{\lambda_k^{\ep, 1}-1}{\int_{\mathbb R^N}\beta_k(z)\phi^{\ep, 1}_k(z)\dd z}\phi^{\ep, 1}_k$, {we note that $ D_{A_k^{\ep,*}}T^{\ep}_{k}h $ may also be expressed as 
		 \begin{equation*}
			 D_{A_k^{\ep,*}}T^{\ep}_{k}h=\dfrac{{L_k^\ep} h}{\lambda_k^{\varepsilon, 1}}-\dfrac{\lambda_k^{\varepsilon, 1}-1}{\lambda_k^{\varepsilon, 1}} \frac{\int_{\R^N} \beta_k(y)h(y)\dd y}{\int_{\mathbb R^N}\beta_k(z)\phi_k^{\varepsilon, 1}(z)\dd z}\phi_k^{\varepsilon, 1}.
		 \end{equation*}}
		 Let us write $h^n:=\langle h,  \phi^{\ep,n}_k\rangle_{{\Psi_k}}$, we compute
	\begin{align}
		\langle D_{A_k^{\ep,*}}T^{\ep}_{k}h, \phi^{\ep, n}_k\rangle_{\Psi_k}&=
		\begin{cases} 
			\dfrac{	h^1\lambda_k^{\ep, 1}-\dfrac{\lambda_k^{\ep, 1}\langle A^{\ep, *}_k, \phi^{\ep, 1}_k\rangle_{\Psi_k}}{1+\theta^{-1}\int_{\R^N}\beta_k(y)A_k^{\ep, *}(y)\dd y}   \displaystyle \int_{\R^N} \frac{\beta_k(y)}{\theta}h(y)\dd y}{1+\theta^{-1}\int_{\R^N}\beta_k(y)A_k^{\ep, *}(y)\dd y} , & \text{ if } n=1,\vspace{2pt}\\
			\dfrac{ h^n {\lambda^{\ep,n}_k}}{1+\theta^{-1}\int_{\R^N}\beta_k(y) A_k^{\ep, *}(y)\dd y}, & \text{otherwise},
		\end{cases}\nonumber \\
		&=
		\begin{cases} 
			h^1-\frac{\lambda_k^{\ep, 1}-1}{\lambda_k^{\ep, 1}}  \frac{\int_{\R^N} \beta_k(y)h(y)\dd y}{\int_{\mathbb R^N}\beta_k(z)\phi^{\ep, 1}_k(z)\dd z} , & \text{ if } n=1,\vspace{2pt}\\
			\dfrac{ {\lambda^{\ep,n}_k}}{\lambda_k^{\ep, 1}}h^n , & \text{otherwise}.
		\end{cases}\label{eq:eigfun-uncoupled}
	\end{align}
	We deduce that $\phi^{\ep, 1}_k$ is an eigenvector of $D_{A^{\ep, *}_k}T^\ep_k$ associated with the eigenvalue $\frac{1}{\lambda_k^{\ep, 1}}$, and  that every function 
	\begin{equation*}
		\tilde \phi^{\ep, n}_k:=\phi^{\ep, 1}_k+\left(\dfrac{1-\lambda^{\ep,n}_k}{\lambda_k^{\ep, 1}-1}\right)\dfrac{\int_{\mathbb R^N}\beta_k(z)\phi^{\ep, 1}_k(z)\dd z}{\int_{\mathbb R^N}\beta_k(z)\phi^{\ep, n}_k(z)\dd z}\phi^{\ep, n}_k
	\end{equation*}
	is an eigenvector of $D_{A^{\ep, *}_k}T^\ep_k$  associated with the eigenvalue $\frac{\lambda_k^{\ep, n}}{\lambda_k^{\ep, 1}}$. Thus:
\begin{align*} 
	\sigma(D_{A_k^{\ep,*}}T^{\ep}_{k})&\supset\{0\}\cup \left\{\dfrac{\lambda^{\ep,n}_k}{\lambda^{\ep,1}_k}\right\}_{n\geq 2} \cup \left\{\dfrac{1}{\lambda^{\ep,1}_k}\right\} 
\end{align*}

	Conversely let $\lambda\in\sigma(D_{A_k^{\ep,*}}T^{\ep}_{k})\setminus\{0\}$ be given and $h\in \Co(\Sigma_k)\setminus\{0\}$ be an associated eigenfunction. If $\supp h\subset \Sigma_k\setminus \Omega_k$ then {$ L_k^\varepsilon h=\frac{\Lambda\xi_k}{\theta} m_\varepsilon\star (\Psi_k h)=0$ because $\Psi_k$ is supported in $\Omega_k$, and therefore}
\begin{equation*}
	{D_{A_k^{\ep,*}}T^{\ep}_{k}h=}\lambda h=-\frac{\lambda_k^{\ep, 1}-1}{\lambda_k^{\ep, 1}}  \frac{\int_{\R^N} \beta_k(y)h(y)\dd y}{\int_{\mathbb R^N}\beta_k(z)\phi^{\ep, 1}_k(z)\dd z}\phi_k^{\ep, 1}, 
\end{equation*}
	which implies $h=\phi^{\ep,1}_k$ (up to the multiplication by a nonzero scalar), and this is a contradiction.  Therefore $\supp h\cap\Omega_k\neq\varnothing$. Then, taking the scalar product {of $ D_{A_k^{\ep,*}}T^{\ep}_{k}h $} with $\phi^{\ep, n}_k$ one finds that \eqref{eq:eigfun-uncoupled} still holds, {i.e., 
	\begin{equation*}
\lambda h^n =\langle D_{A_k^{\ep,*}}T^{\ep}_{k}h, \phi^{\ep, n}_k\rangle_{\Psi_k}=\begin{cases} 
			h^1-\frac{\lambda_k^{\ep, 1}-1}{\lambda_k^{\ep, 1}}  \frac{\int_{\R^N} \beta_k(y)h(y)\dd y}{\int_{\mathbb R^N}\beta_k(z)\phi^{\ep, 1}_k(z)\dd z} , & \text{ if } n=1,\vspace{2pt}\\
			\dfrac{ {\lambda^{\ep,n}_k}}{\lambda_k^{\ep, 1}}h^n , & \text{otherwise}.
		\end{cases}
	\end{equation*} }
	In particular, $\lambda $ is either one of the $\frac{\lambda^{\ep, n}_k}{\lambda^{\ep, 1}_k}$ {(if there is $n>1$ such that $h^n\neq 0$)} or $\frac{1}{\lambda^{\ep, 1}_k}$ {(if $h^n=0$ for all $n>1$)}. We have shown:
\begin{align*} 
	\sigma(D_{A_k^{\ep,*}}T^{\ep}_{k})&\subset\{0\}\cup \left\{\dfrac{\lambda^{\ep,n}_k}{\lambda^{\ep,1}_k}\right\}_{n\geq 2} \cup \left\{\dfrac{1}{\lambda^{\ep,1}_k}\right\} ,
\end{align*}
hence the equality holds.

If now $R_{0,1}\leq 1$, we have $A^{\ep, *}_k\equiv 0$ and therefore $D_{A^{\ep, *}_k}T^\ep_k=L_k^\ep$.  Then 
	\begin{equation*}
		\sigma(D_{A_k^{\ep,*}}T^{\ep}_{k})=\sigma(L_k^\ep)=\{0\}\cup \left\{\lambda^{\ep,n}_k\right\}_{n\geq 1} .
	\end{equation*}
Since $\lambda^{\ep,n}_k< \lambda^{\ep,1}_k$ for any $k\in\{1,2\}$ and $n\geq 2$, we deduce that whenever $R_{0,k}\neq 1$, there exists $\ep_0>0$ such that for every $\ep\in(0,\ep_0]$, we have:
\begin{equation}\label{Proof:Step1}
	\sigma\left(D_{A_k^{\ep,*}}T^{\ep}_{k}\right)\subset [0,1).
\end{equation}
If $R_{0,k}=1$, then \eqref{Proof:Step1} holds because of our assumption that $\lambda^{\ep, 1}_k\leq 1-C\ep^M$.

\medskip

\noindent\textbf{Step two:} For each $\ep>0$, let $\lambda^\ep\in \sigma(D_{A^{\ep}}T^{\ep})\setminus\{0\}$ be given and consider a bounded family of associated eigenvectors $h^\ep\in \Co(\Sigma)$. We prove that 
\begin{equation}\label{Proof:Step2}
	\begin{aligned}
		\sup_{\Sigma_k}\left|(D_{A_k^{\ep,*}}T^{\ep}_{k} -\lambda^\ep I)h_k^\ep\right|&=o(\varepsilon^\infty) , & k&=1,2,
	\end{aligned}
\end{equation}
for $\varepsilon>0$ sufficiently small, wherein we have set $h_1^\ep:=\chi_{\Sigma_1}h^\ep$ and $h_2^\ep:=\chi_{\Sigma_2}h^\ep$.\medskip

Let us show the property for $k=1$. The case $k=2$ is similar. We rewrite the identity
$\chi_{\Sigma_1}D_{A^{\ep}}T^{\ep} h^\ep=\lambda^\ep h_1^\ep$
as follows
\begin{equation}\label{eq:lemuniqueness-sys}
	(D_{A_1^{\ep,*}}T^{\ep}_{1} -\lambda^\ep I)h^\ep_1=(D_{A_1^{\ep,*}}T^{\ep}_{1}h^\ep_1-D_{A_1^{\ep}}T^{\ep}_{1}h^\ep_1)-D_{A_2^{\ep}}T^{\ep}_{2}h^\ep_2 \quad \text{in } \Sigma_1.
\end{equation}
Our next task is to show that the right-hand side of the previous equation has order $o(\varepsilon^\infty)$.
We first remark that, by Lemma \ref{Lemma:Decay}, we have
\begin{equation}\label{Proof:Step2_3}
	\sup_{x\in \Sigma_1}\left\vert D_{A_2^\ep}T_2^\ep h^\ep_2(x)\right\vert=o(\varepsilon^\infty). 
\end{equation}
Next we claim that, for $k\in\{1,2\}$, one has
\begin{equation}\label{Proof:Step2_4}
	\sup_{\Sigma_k} \left |D_{A_1^{\ep}}T^{\ep}_{1}h^\ep_1-D_{A_1^{\ep,*}}T^{\ep}_{1}h^\ep_1\right |=o(\ep^\infty).
\end{equation}
Indeed, we have
\begin{multline}\label{eq:lemuniqueness-op}
	D_{A_1^{\ep}}T^{\ep}_{1}h_1^\ep-D_{A_1^{\ep,*}}T^{\ep}_{1}h_1^\ep=\left(\frac{1}{1+\theta^{-1}\int_{\R^N}\beta_1(y)A_1^\ep(y)\dd y}-\frac{1}{1+\theta^{-1}\int_{\R^N}\beta_1(y)A_1^{\ep, *}(y)\dd y}\right) {L_1^\ep} h^\ep_1 \\
	 -\left(\frac{{L_1^\ep}A_1^\ep}{\left(1+\theta^{-1}\int_{\R^N}\beta_1(y)A_1^\ep(y)\dd y\right)^2}-\frac{{L_1^\ep} A_1^{\ep, *}}{\left(1+\theta^{-1}\int_{\R^N}\beta_1(y)A_1^{\ep, *}(y)\dd y\right)^2}\right)\int_{\R^N}\frac{\beta_1(y)}{\theta}h^\ep_1(y)\dd y.
\end{multline}
On the one hand, using Theorem \ref{Thm:shape}, we have
\begin{equation*}
	\left|\dfrac{1}{1+\theta^{-1}\int_{\R^N}\beta_1(y)A^{\ep}_1(y)\dd y}-\dfrac{1}{1+\theta^{-1}\int_{\R^N}\beta_1(y)A^{\ep,*}_1(y)\dd y}\right|\leq \dfrac{\|\beta_1\|_{L^\infty}}{\theta^2}\|A^{\ep}_1-A^{\ep,*}_1\|_{L^1\left(\Sigma_1\right)}=o(\ep^\infty), 
\end{equation*}
which settles the first term on the right-hand side of \eqref{eq:lemuniqueness-op}. On the other hand, we also have
\begin{multline*}
	\frac{{L_1^\ep} A_1^\ep}{\left(1+\theta^{-1}\int_{\R^N}\beta_1(y)A_1^\ep(y)\dd y\right)^2}-\frac{{L_1^\ep} A_1^{\ep, *}}{\left(1+\theta^{-1}\int_{\R^N}\beta_1(y)A_1^{\ep, *}(y)\dd y\right)^2}= \frac{{L_1^\ep} (A_1^\ep-A_1^{\ep, *})}{\left(1+\theta^{-1}\int_{\R^N}\beta_1(y)A_1^\ep(y)\dd y\right)^2}\\
	+\left(\frac{1}{\left(1+\theta^{-1}\int_{\R^N}\beta_1(y)A_1^{\ep}(y)\dd y\right)^2}-\frac{1}{\left(1+\theta^{-1}\int_{\R^N}\beta_1(y)A_1^{\ep, *}(y)\dd y\right)^2}\right){L_1^\ep} A_1^{\ep, *},
\end{multline*}
and, for all $x\in\Sigma_1$
\begin{flalign*}
	\left|{L_1^\ep}(A_1^\ep-A_1^{\ep, *})\right|(x)&=
\dfrac{|\xi_1 \int_{\Omega_1}m_\ep(x-y)\Psi_1(y)(A^{\ep}_1(y)-A^{\ep,*}_1(y))\dd y|}{(\theta+\int_{\R^N}\beta_1(y)A^{\ep,*}_1(y)\dd y)^2}  \\
	&\leq \dfrac{\xi_1}{\theta^2}\|\Psi_1\|_{L^\infty} \int_{\Omega_1}m\left(\dfrac{x-y}{\ep}\right)\dfrac{|A^{\ep}_1(y)-A^{\ep,*}_1(y)|}{\ep^N}\dd y \\
&\leq \dfrac{\xi_1}{\theta^2}\|\Psi_1\|_{L^\infty}\|m\|_{L^\infty}\dfrac{\|A^{\ep}_1-A^{\ep,*}_1\|_{L^1(\Omega_1)}}{\ep^N}=o(\ep^\infty),
\end{flalign*}
thus \eqref{Proof:Step2_4} holds. Combining \eqref{eq:lemuniqueness-sys}, \eqref{Proof:Step2_3} and \eqref{eq:lemuniqueness-op}, we have indeed shown \eqref{Proof:Step2}.

\medskip

\noindent\textbf{Step three:} Assume by contradiction that there exists a sequence $\lambda^\ep\in\sigma(D_{A^\ep}T^\ep)$  with $\ep\to 0$ and such that 
\begin{equation*}
	|\lambda^\ep|\geq 1.
\end{equation*}
Let $h^\ep\in\Co(\Sigma)$ be a sequence of associated normed (in $\Co(\Sigma)$) eigenvectors. Then there is $k\in \{1,2\}$ such that $\sup_{\Sigma_k}|h_k^\ep|=1$ for infinitely many $\ep>0$. 
Using the symmetry with respect to the indices and the possible extraction of subsequences, we will assume in this step that $k=1$. 

Let us first consider the case where $R_{0, 1}>1$. Then, let us define $g^\ep:=(D_{A_1^{\ep, *}}T^\ep_1-\lambda^\ep I)h^\ep_1$. 

Due to \eqref{Proof:Step2} we have $\|g^\ep\|_{\Co(\Sigma_1)}=o(\varepsilon^\infty)$. Next taking the inner product with $\phi^{\ep, n}_1$ yields, as in \eqref{eq:eigfun-uncoupled},
\begin{equation*}
	\langle h^\ep_1, \phi^{\ep, n}_1\rangle_{\Psi_1}=\dfrac{1}{\frac{\lambda^{\varepsilon, n}_1}{\lambda^{\varepsilon, 1}_1}-\lambda^\ep}g^\ep_n, \quad \forall n\geq 2,
\end{equation*}
where $g^\ep_n:=\langle g^\ep, \phi^{\ep, n}_1\rangle_{\Psi_1}$. Then, 
\begin{align*}
	\left|\frac{\lambda^{\ep, n}_1}{\lambda^{\ep, 1}_1}-\lambda^\ep\right|\geq |\lambda^\ep|-\left|\frac{\lambda^{\ep, n}_1}{\lambda^{\ep, 1}_1}\right|\geq 1-\frac{|\lambda^{\ep, 2}_1|}{|\lambda^{\ep, 1}_1|}\geq \frac{|\lambda^{\ep, 1}_1|-|\lambda^{\ep, 2}_1|}{|\lambda^{\ep, 1}_1|}\geq C\ep^M, 
\end{align*}
for some $C>0$ and $M>0$ independent of $\varepsilon$ and $n$. This shows 
\begin{equation*}
	|\langle h^\ep_1, \phi^{\ep, n}_1\rangle_{\Psi_1}|=|g^\ep_n|\times \mathcal O(\ep^{-M}), \quad \forall n\geq 2
\end{equation*}
therefore
\begin{align*}
	\Vert h^\ep_1-\langle h^\ep_1, \phi^{\ep, 1}_1\rangle_{\Psi_1}\phi^{\ep, 1}_1\Vert_{L^2_{\Psi_1}}^2 &
	= \sum_{n=2}^{+\infty}|\langle h^\ep_1, \phi^{\ep, n}_1\rangle_{\Psi_1}|^2 
	= \sum_{n=2}^{+\infty}\left|\dfrac{1}{\frac{\lambda^{\varepsilon, n}_1}{\lambda^{\varepsilon, 1}_1}-\lambda^\ep}g_n^\ep\right|^2 \\
	& \leq C^{-2}\ep^{-2M} \sum_{n=2}^{+\infty} |g^\ep _n|^2 
	\leq C^{-2}\ep^{-2M}\Vert g^\ep \Vert_{L^2_{\Psi_1}}^2
	= o(\varepsilon^\infty)
\end{align*}
by using \eqref{Proof:Step2}. 

Set $\mu^\ep:=h^\ep_1-\langle h^\ep_1, \phi^{\ep, 1}_1\rangle_{\Psi_1}\phi^{\ep, 1}_1$, then we have $\Vert L_1^\ep \mu^\ep\Vert_{\Co(\Sigma_1)}=\mathcal O(\Vert \mu^\ep\Vert_{L^2_{\Psi_1}})=o(\varepsilon^\infty)$. By means of \eqref{Proof:Step2}, we deduce that
\begin{align}
	\lambda^\ep h^\ep_1+o(\ep^\infty)&=D_{A^{\ep, *}_1}T^\ep_1h^\ep_1=\frac{L^\ep_1(\mu^\ep+\langle h^\ep_1, \phi^{\ep, 1}_1\rangle_{\Psi_1}\phi^{\ep, 1}_1)}{\lambda^{\ep, 1}_1}-\frac{\lambda^{\ep, 1}_1-1}{\lambda^{\ep, 1}_1}\frac{\int_{\mathbb R^N}\beta_1(y)h_1^\ep(y)\dd y}{\int_{\mathbb R^N}\beta_1(y)\phi^{\ep, 1}_1(y)\dd y}\phi^{\ep, 1}_1 \nonumber \\
	&=\left(\langle h^\ep_1, \phi^{\ep, 1}_1\rangle_{\Psi_1}-\frac{\lambda^{\ep,1}_1-1}{\lambda^{\ep,1}_1} \frac{\int_{\mathbb R^N}\beta_1h^\ep_1}{\int_{\R^N}\beta_1\phi^{\ep,1}_1}\right)\phi^{\ep, 1}_1+o(\varepsilon^\infty):=\alpha^\ep\phi^{\ep, 1}_1+o(\ep^\infty). \label{Proof_Step3}
\end{align}
Next note that
\begin{equation*}
	1\leq |\lambda^\ep|\sup_{x\in \Sigma_1}|h^\ep_1(x)|=|\alpha^\ep|\sup_{x\in\Sigma_1}\phi^{\ep, 1}_1(x)+o(\ep^\infty),
\end{equation*}
where
\begin{align*}
	\sup_{x\in\Sigma_1}\phi^{\ep, 1}_1(x)&=\frac{1}{\lambda^{\ep, 1}_1}\sup_{x\in\Sigma_1} L_1^{\ep}\phi^{\ep, 1}_1(x)=\frac{1}{\lambda^{\ep,1}_1}\sup_{x\in\Sigma_1}\frac{\Lambda\xi_1}{\theta}\int_{\mathbb R^N}m_\ep (x-y)\Psi_1(y)\phi^{\ep, 1}_1(y)\dd y\\
	&\leq \frac{\Lambda\xi_1}{\lambda^{\ep, 1}_1\theta}\frac{\Vert m\Vert_{L^\infty}}{\ep^N} \Vert\Psi_1\Vert_{L^2(\mathbb R)}\Vert\phi^{\ep, 1}_1\Vert_{L^2_{\Psi_k}}=\mathcal O(\ep^{-N}), 
\end{align*}
therefore $|\alpha^\ep|\geq C\ep^N$ for some constant $C>0$. By definition of $h^\ep$ and using \eqref{Proof:Step2_3} {and} \eqref{Proof_Step3}, it follows that
\begin{equation*}
	o(\ep^\infty)=(D_{A^{\ep, 1}_1}T^\ep_1-\lambda^\ep I)h^\ep_1=\frac{1}{\lambda^\ep}(D_{A^{\ep, 1}_1}T^\ep_1-\lambda^\ep I)(\alpha^\ep\phi^{\ep, 1}_1+o(\ep^\infty))=\frac{\alpha^\ep}{\lambda^\ep}\left(\frac{1}{\lambda^{\ep, 1}_1}-\lambda^\ep\right)\phi^{\ep, 1}_1+o(\ep^\infty),
\end{equation*}
then multiplying by $\phi^{\ep, 1}_1\Psi_1$ and integrating, we get
\begin{equation*}
	\left|\frac{1}{\lambda^{\ep, 1}_1}-\lambda^\ep\right|=o(\varepsilon^\infty).
\end{equation*}
Since $\lambda^\ep\geq 1$ and $\lambda^{\ep, 1}_1\to R_{0, 1}>1$ as $\ep\to 0$, we obtain a contradiction.\medskip

Now we assume that $R_{0,1}\leq 1$, then we have $A_1^{\ep, *}\equiv 0$, hence
\begin{equation*}
	D_{A^{\ep, *}_1}T^\ep_1=L_1^\ep,
\end{equation*}
which leads us to 	
\begin{equation*}
	r_\sigma(D_{A_1^{\ep,*}}T^{\ep}_{1})=r_\sigma(L_1^\ep)\xrightarrow[\ep\to 0]{}R_{0, 1}\leq 1.
\end{equation*}
Moreover, by definition of $\lambda^\ep$ and using \eqref{Proof:Step2}, we have $(L^\ep_1-\lambda^\ep I)h^\ep_1=:g^\ep=o(\ep^\infty)$ hence 
\begin{equation*}
	\Vert h^\ep_1\Vert_{L^2_{\Psi_1}}\leq \Vert (L^\ep_1-\lambda^\ep I)^{-1}g^\ep\Vert_{L^2_{\Psi_1}}\leq \Vert (L^\ep_1-\lambda^\ep I)^{-1}\Vert_{\mathcal L(L^2_{\Psi_1})}\Vert g^\ep\Vert_{L^2_{\Psi_1}}=\frac{1}{\dist(\lambda^\ep, \sigma(L_1^\ep))}\Vert g^\ep\Vert_{L^2_{\Psi_1}}.
\end{equation*}
Now let us observe that there exists some constant $C>0$ such that $\Vert h^\ep_1\Vert_{L^2_{\Psi_1}}\geq C\ep^{N}$ for  $\ep$ sufficiently small. To see this, note that one has, for all $x\in \Sigma_1$,
\begin{align*}
	|h^\ep_1|(x)&=\frac{1}{|\lambda^\ep|}|L^\ep_1h^\ep_1(x)-g^\ep(x)|\leq \frac{1}{|\lambda^\ep|}\left(\frac{\Lambda\xi_1}{\theta}\int_{\mathbb R^N}m_\ep (x-y)|h^\ep_1|(y)\Psi_1(y)\dd y + |g^\ep|(x) \right) \\
	&\leq \frac{c}{\ep^N}\Vert h^\ep_1\Vert_{L^2_{\Psi_1}} + o(\ep^\infty),
\end{align*}
where $c>0$ is some constant independent of $\ep$. Finally recalling that $\|h_1^\ep\|_{\Co(\Sigma_1)}=1$ this proves the expected lower bound {$\Vert h^\ep_1\Vert_{L^2_{\Psi_1}}\geq C\ep^{N}$ for  $\ep$ sufficiently small.} This estimate allows us to conclude that 
\begin{equation*}
	\dist(\lambda^\ep, \sigma(L^\ep_1))=o(\ep^\infty), 
\end{equation*}
which is a contradiction since $\lambda^\ep\geq 1$ while 
\begin{equation*}
	\sup\{|\lambda|, \lambda\in\sigma(L^\ep_1)\}=r(L^\ep_1)\leq 1-C\ep^M,
\end{equation*}
by our assumptions and \eqref{Proof:Step1}. This completes the proof of Lemma \ref{Lemma:Spec}.
\end{proof}

Our next task is to compute the Leray-Schauder degree of the operator $T^\ep$ in a suitable subset of the positive cone, $\Co_+(\Sigma)$, of $\Co(\Sigma)$. For $\alpha>0$ we define the open set
\begin{equation*}
	K_{\alpha}:=\left\{A\in\Co(\Sigma):\, A(x)>\alpha  
	\quad\forall x\in \Sigma\right\}.
\end{equation*}
\begin{lemma}[Computation of the degree]\label{lem:LSD}
	Assume that $R_{0,1}>1$. Then, for  $\ep>0$ sufficiently small, there exists $\alpha=\alpha(\varepsilon)>0$ such that for any nonnegative nontrivial (thus positive) fixed point $A \in\Co(\Sigma)$ of $T^\ep$, we have:
	\begin{equation*}
		A\in K_{\alpha}.
	\end{equation*}
	Moreover, 
	\begin{equation}\label{eq:LSD}
		\deg\left( I-T^\ep, K_{\alpha}\right)=1,
	\end{equation}
	where $\deg$ denotes the Leray-Schauder degree. 
\end{lemma}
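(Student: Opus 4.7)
The proof splits into two parts: establishing a uniform pointwise lower bound $\alpha(\ep)>0$ on all positive fixed points of $T^\ep$, and computing the Leray--Schauder degree via a homotopy argument.

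For the first part, any nontrivial fixed point satisfies $A=\sum_{k=1,2}L_k^\ep A/(\mu_k^\ep/\theta)$, and Theorem~\ref{Thm:shape} yields $\|A-A^{\ep,\ast}\|_{L^1(\R^N)}=o(\ep^\infty)$ with $A^{\ep,\ast}=A_1^{\ep,\ast}+A_2^{\ep,\ast}$. Since $m_\ep\star\cdot\colon L^1\to L^\infty$ has operator norm $O(\ep^{-N})$, the fixed-point equation combined with the $L^1$ estimate and Corollary~\ref{CO1} upgrades to $\|A-A^{\ep,\ast}\|_{L^\infty(\Sigma)}=o(\ep^\infty)$. Each nonzero $A_k^{\ep,\ast}$ is continuous and strictly positive on $\R^N$ via its convolution representation $\phi_k^{\ep,1}=(\lambda_k^{\ep,1})^{-1}\tfrac{\Lambda\xi_k}{\theta}m_\ep\star(\Psi_k\phi_k^{\ep,1})$ and the a.e.\ positivity of $m_\ep$, hence $\min_\Sigma A^{\ep,\ast}>0$ for each fixed $\ep$. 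Setting $\alpha(\ep):=\tfrac12\min_\Sigma A^{\ep,\ast}$ gives $A\in K_\alpha$ for $\ep$ sufficiently small.

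For the degree computation, let $A^{\ep,\star}\in K_\alpha$ be any positive fixed point of $T^\ep$ (provided by Theorem~\ref{Thm:main}); by possibly shrinking $\alpha$, assume $A^{\ep,\star}\geq 2\alpha$ on $\Sigma$. Define
\begin{equation*}
H(s,A):=sT^\ep(A)+(1-s)A^{\ep,\star},\qquad (s,A)\in[0,1]\times\Co(\Sigma).
\end{equation*}
Then $H$ is compact, $H(1,\cdot)=T^\ep$, and $H(0,\cdot)\equiv A^{\ep,\star}$ has the unique fixed point $A^{\ep,\star}\in K_\alpha$, so $\deg(I-H(0,\cdot),K_\alpha)=1$. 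We claim no fixed point of $H(s,\cdot)$ lies on $\partial K_\alpha$ for any $s\in[0,1]$. Any such fixed point $A$ satisfies $A(x)\geq(1-s)A^{\ep,\star}(x)\geq 2(1-s)\alpha$ pointwise on $\Sigma$: for $s\in[0,\tfrac12]$, this combined with the strict positivity $T^\ep(A)>0$ on $\Sigma$ (since $m_\ep>0$ a.e.) yields $A>\alpha$ on $\Sigma$, contradicting $\min_\Sigma A=\alpha$; for $s\in(\tfrac12,1]$, we adapt the spectral analysis of Section~\ref{Sec:Thm2} to the perturbed equation $A=sT^\ep(A)+(1-s)A^{\ep,\star}$ to obtain $\|A-A^{\ep,\ast}\|_{L^\infty(\Sigma)}=o(\ep^\infty)$ uniformly in $s$, whence $A\geq\tfrac32\alpha$ on $\Sigma$.

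By homotopy invariance of the Leray--Schauder degree, $\deg(I-T^\ep,K_\alpha)=\deg(I-H(0,\cdot),K_\alpha)=1$, which proves \eqref{eq:LSD}. The main technical obstacle is the uniform shape estimate for fixed points of $H(s,\cdot)$ in the range $s\in(\tfrac12,1]$: the additive term $(1-s)A^{\ep,\star}$ disrupts the purely multiplicative structure exploited in Section~\ref{Sec:Thm2}, but since $A^{\ep,\star}$ itself satisfies $\|A^{\ep,\star}-A^{\ep,\ast}\|_{L^\infty(\Sigma)}=o(\ep^\infty)$ by the first part of the proof, the spectral-gap arguments (relying on Assumption~\ref{Assump:gap}) should carry through with only an additional $s$-uniform $o(\ep^\infty)$ error contributed by the affine term.
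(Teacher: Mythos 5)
Your proof takes a genuinely different route — a convex homotopy to the constant map $A^{\ep,\star}$, instead of the paper's decoupling homotopy over $\Co(\Sigma_1)\times\Co(\Sigma_2)$ — but it contains two gaps.

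First, the derivation of the pointwise lower bound is flawed. You set $\alpha(\ep):=\tfrac12\min_\Sigma A^{\ep,*}$ and claim that $\|A-A^{\ep,*}\|_{L^\infty(\Sigma)}=o(\ep^\infty)$ forces $A\in K_\alpha$. But $\min_\Sigma A^{\ep,*}$ is itself $o(\ep^\infty)$: the eigenfunction $\phi_k^{\ep,1}$ concentrates at the maximizer of $\Psi_k$, and at points of $\Sigma$ a fixed positive distance away its value is controlled by $m_\ep$ evaluated there, hence decays super-polynomially under Assumption~\ref{Assump:supports}. Two quantities that are both $o(\ep^\infty)$ cannot be compared through the asymptotic notation alone, so ``$\|A-A^{\ep,*}\|_{L^\infty}=o(\ep^\infty)$ implies $A\geq\tfrac12 A^{\ep,*}$'' does not follow. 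The paper instead derives a lower bound directly, independently of the shape estimate: the persistence argument of Lemma~\ref{Lemma:Prel_3} item (3) gives $\int\beta_k A\geq\tfrac\theta2(r_\sigma(L_k^\ep)-1)$ uniformly, which bootstraps through the convolution representation to $A(x)\geq c\,\min_{x,y\in\Sigma_k}m_\ep(x-y)>0$ on $\Sigma_k$, avoiding any comparison between super-polynomially small quantities.

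Second, and more seriously, you assert but do not establish the admissibility of your homotopy on the range $s\in(\tfrac12,1]$. The estimates of Lemmas~\ref{LE1}--\ref{LE3} hinge on the fact that a fixed point $A$ of $T^\ep$ is a positive eigenvector of the (A-dependent) linear compact operator $K^\ep$ with eigenvalue~$1$, whence Krein--Rutman forces $r_\sigma(K^\ep)=1$ and in particular $\tfrac{\mu_k^\ep}{\theta}\leq r_\sigma(L_k^\ep)$, the starting point of the whole chain leading to $\mu_k^\ep/\theta=\lambda_k^{\ep,1}+o(\ep^\infty)$. A solution of the inhomogeneous perturbed equation $A=sT^\ep(A)+(1-s)A^{\ep,\star}$ is \emph{not} an eigenvector of any positive operator, so this mechanism is unavailable and it is not clear how to control fixed points of $H(s,\cdot)$ for $s$ near $1$. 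The paper's homotopies $T^{\ep,\tau}$ are constructed precisely so that every intermediate problem retains the same multiplicative structure (either modulating the cross-coupling by $\tau$, or rescaling $\beta_2$ to raise $R_{0,2}$ when $R_{0,2}\leq 1$); the persistence inequality then applies uniformly in $\tau$, and at $\tau=0$ the operator factors over $\Co(\Sigma_1)\times\Co(\Sigma_2)$, allowing the product formula and linearization to compute the degree. Until your approach supplies a replacement for the eigenvector argument in the perturbed setting, the homotopy-invariance step is incomplete.
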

\begin{proof}
	Our proof relies on the construction of a suitable homotopy which allows us to separate the variables and compute the Leray-Schauder degree. For technical reasons, we do not use the same homotopy in the case $R_{0, 2}>1$ and $R_{0, 2}<1$. Therefore, we split the proof into two parts. \medskip

	\noindent{\sc Part 1: the case $R_{0,2}>1$.} Let us define, for $\tau\in[0,1]$, $A\in \Co_+(\Sigma)$ and $(A_1, A_2):=(\chi_{\Sigma_1}A, \chi_{\Sigma_2}A)$, the operators 
	\begin{equation}\label{eq:TT^ep_tau}
		\begin{aligned}
			T^{\ep, \tau}_1(A):&=\dfrac{\chi_{\Sigma_1}L_1^\varepsilon {A_1}}{1+\theta^{-1}\int_{\R^N}\beta_1(y)A_1(y)\dd y}+\tau\dfrac{\chi_{\Sigma_1}L_2^\varepsilon {A_2}}{1+\theta^{-1}\int_{\R^N}\beta_2(y)A_2(y)\dd y} =\chi_{\Sigma_1}T_{1}^\varepsilon A_1 + \tau\chi_{\Sigma_1} T_{2}^\ep A_2, \\
			T^{\ep, \tau}_2(A):&=\tau\dfrac{\chi_{\Sigma_2}L_1^\varepsilon {A_1}}{1+\theta^{-1}\int_{\R^N}\beta_1(y)A_1(y)\dd y}+\dfrac{\chi_{\Sigma_2}L_2^\varepsilon {A_2}}{1+\theta^{-1}\int_{\R^N}\beta_2(y)A_2(y)\dd y} =\tau \chi_{\Sigma_2}T_{1}^\ep A_1+\chi_{\Sigma_2} T_{2}^\varepsilon A_2 ,   \\
			T^{\ep, \tau}(A):&=T^{\ep, \tau}_1A+T^{\ep, \tau}_2 A
		\end{aligned}
		\end{equation}
		where $T^\ep_{k}$ is defined in \eqref{eq:Tk} for each $k\in\{1,2\}$. 
	The map $\left(\tau,A\right)\mapsto T^{\ep,\tau}(A)$ is continuous from $[0, 1]\times \Co_+(\Sigma)$ into $\Co(\Sigma)$. 
Let us first observe that there exists $M>0$ such that for all $\tau\in [0,1]$, if $A\in \Co_+(\Sigma)$ satisfies $A=T^{\ep,\tau}(A)$ then $\|A\|_{L^1(\Sigma)}\leq M$. One may also notice that this upper bound can be chosen independently of $\ep>0$.

We first show that the fixed points of $T^{\ep,\tau}$ can be estimated from below uniformly in $\tau\in (0,1)$. This will allow us to easily compute the Leray-Schauder degree, since $T^{\ep,0}$ is completely uncoupled in its variables.\medskip

	\noindent\textbf{Step 1:} We show that there exists $\varepsilon_0>0$ small enough such that for all $\varepsilon\in (0,\varepsilon_0)$ there exists $\alpha=\alpha(\varepsilon)>0$ such that for all $\tau\in [0,1]$ one has 
	\begin{equation}\label{eq:lowerbound_uniform_tau}
		\min_{x\in\Sigma}(A^\tau(x))>\alpha,
\end{equation}
	for any {$A^\tau$ satisfying $T^{\ep,\tau} A^\tau=A^\tau$ and $\min_{z\in\Sigma_1\cup \Sigma_2}A^\tau(z)>0$}.

{
First, since $\lim_{\varepsilon\to 0}r_\sigma(L_k^\varepsilon)=R_{0, k}>1$ ($k=1,2$), we can find $\ep_0>0$ such that $r_\sigma(L^\ep_1)>1$ and $r_\sigma(L^\ep_2)>1$ for any $\ep\in(0,\ep_0)$.\\
Let $\ep\in (0,\ep_0)$ be given and fixed.
Let $\tau\in[0,1]$ be given and $A^\tau\in \Co(\Sigma)$ be a fixed point of $T^{\ep,\tau}$ such that $\min_{z\in\Sigma_1\cup \Sigma_2}A^\tau(z)>0$.  
Set $(A^\tau_1, A^\tau_2):=(\chi_{\Sigma_1}A^\tau, \chi_{\Sigma_2}A^\tau)\in \Co_+(\Sigma_1)\times \Co_+(\Sigma_2)\subset L^1_+(\Sigma_1)\times L^1_+(\Sigma_2)$ so that $T^{\ep,\tau}(A^\tau_1+A^\tau_2)=(A^\tau_1+A^\tau_2)$ and $\min_{z\in \Sigma_1} A_1^\tau(z)>0$, $\min_{z\in \Sigma_2}A_2^\tau(z)>0$}.

	{Now reasoning as in the proof of the estimate \eqref{eq:lowerbound} in Lemma \ref{Lemma:Prel_3} we find that 
	\begin{equation}\label{eq:lowerbound-tau-1}
		\int_{\mathbb R^N}\beta_1(y)A^\tau(y)\dd y\geq \frac{\theta}{2}(r_\sigma(L_1^\ep)-1). 
	\end{equation}
	Indeed suppose by contradiction that $\int_{\mathbb R^N}\beta_1(y)A^\tau(y)\dd y<\frac{\theta}{2}(r_\sigma(L_1^\ep)-1) =:\eta$, then we also have $\int_{\mathbb R^N}\beta_1(y)\big[(T^{\ep, \tau})^n A^\tau\big](y)\dd y<\eta $ for any $n\geq 0$ and 
	\begin{align*}
		T^{\varepsilon, \tau}(A^\tau) & = T_1^{\varepsilon, \tau}(A^\tau)+T_2^{\varepsilon, \tau}(A^\tau)\geq T_k^{\varepsilon, \tau}(A^\tau)\geq \frac{\theta}{\theta+\eta}L_1^{\varepsilon} A^\tau \\ 
		(T^{\varepsilon, \tau})^2(A^\tau) & = T_1^{\varepsilon, \tau}(T^{\varepsilon, \tau} A^\tau) + T_2^{\varepsilon, \tau}(T^{\varepsilon, \tau} A^\tau) \geq T_1^{\varepsilon, \tau} (T^{\varepsilon, \tau} A^\tau)\geq \frac{\theta}{\theta+\eta} L_1^{\varepsilon} \left(T^{\varepsilon, \tau} A^\tau\right) \geq \left(\frac{\theta}{\theta+\eta} L_1^{\varepsilon}\right)^2A^\tau \\
		& \quad \vdots\\
		(T^{\varepsilon, \tau})^n A^\tau& = T_1^{\varepsilon, \tau} (T^{\varepsilon, \tau})^{n-1}A^\tau + T_2^{\varepsilon, \tau}(T^{\varepsilon, \tau})^{n-1}A^\tau\geq T_1^{\varepsilon, \tau}(T^{\varepsilon, \tau})^{n-1}A^\tau \geq \frac{\theta}{\theta+\eta} L_1^{\varepsilon} (T^{\varepsilon, \tau})^{n-1} A^\tau \\
		&\geq \left(\frac{\theta}{\theta+\eta} L_1^\varepsilon\right)^{n} A^\tau.
	\end{align*}
	Applying Lemma \ref{Lemma:Prel_2} leads to a contradiction and \eqref{eq:lowerbound-tau-1} follows. 

	Since $r_\sigma(L_2^\ep)>1 $, we similarly get  
	\begin{equation*}
		\int_{\mathbb R^N}\beta_2(y)A^\tau(y)\dd y\geq \frac{\theta}{2}(r_\sigma(L_2^\ep)-1).
	\end{equation*}}
	 We deduce that there exists $\eta>0$ (independent of $\ep\in (0,\ep_0)$, $\tau\in [0,1]$) such that for any positive fixed point $A^\tau$ of $T^{\ep,\tau}$ one has
\begin{equation}\label{Eq:eta}
\int_{\R^N}\beta_k(y)A^{\tau}_k(y)\dd y\geq \eta\text{ for any $k\in\{1,2\}$}.
\end{equation}
Next, using \eqref{small} and since the fixed points of $T^{\ep,\tau}$ are bounded by some constant $M$ in $L^1(\Sigma)$, we obtain $A^\tau_k=T^{\ep, \tau}_kA^\tau\leq L^\ep_kA^\tau_k+o(\ep^\infty)$ where $o(\ep^\infty)$ is uniform with respect to $\tau\in [0,1]$ and $x\in\Sigma_k$. Hence we get
	\begin{align*}
		\eta&\leq \int_{\mathbb R^N}\beta_k(x)A^\tau_k(x)\dd x\leq \frac{\Lambda\xi_k}{\theta}\iint_{\mathbb R^N\times \mathbb R^N}\beta_k(x)m_\ep (x-y)A^\tau_k(y)\Psi_k(y)\dd y\dd x +o(\ep^\infty)\\
		&\leq \frac{\Lambda\xi_k\Vert \beta_k\Vert_{L^\infty}}{\theta}\int_{\mathbb R^N}\Psi_k(y)A^\tau_k(y)\dd y+o(\ep^\infty).
	\end{align*}
	Thus we have for any $k\in\{1,2\}$ and any $x\in\Sigma_k$:
	\begin{align*}
		A^{\tau}_k(x)&{=\chi_{\Sigma_k}T^{\ep, \tau}A^\tau \geq T^{\ep}_k A^\tau }\geq \dfrac{\Lambda \xi_k}{\theta+M\|\beta_k\|_{L^\infty}}\int_{\Omega_k}m_\ep(x-y)\Psi_k(y)A^{\tau}_k(y)\dd y\\
		&\geq  \dfrac{\Lambda \xi_k}{\theta+M\|\beta_k\|_{L^\infty}} \min_{x\in \Sigma_k} \int_{\Omega_k}m_\ep(x-y)\Psi_k(y)A^{\tau}_k(y)\dd y\\
		&{\geq \dfrac{\Lambda \xi_k}{\theta+M\|\beta_k\|_{L^\infty}}  \int_{\Omega_k}\Psi_k(y)A^{\tau}_k(y)\dd y\min_{x,y\in \Sigma_k}m_\ep(x-y)}\geq c(\varepsilon)
	\end{align*}
	for some constants $M>0$ and $c(\ep)>0$ independent of $A^\tau$ and $\tau\in [0,1]$. This shows \eqref{eq:lowerbound_uniform_tau} and thus that, for $\ep>0$ sufficiently small, there exists $\alpha=\alpha(\ep)>0$ such that for any $\tau\in [0,1]$, any {positive} fixed points $A^{\tau}$ of $T^{\ep,\tau}$ satisfies $A^{\tau}\in K_{\alpha}$.\medskip

\noindent\textbf{Step 2:} We compute the Leray-Schauder degree of the operator $T^\ep$ in the open set $K_\alpha$. 

	We have shown in the previous step that $A\in K_\alpha$ for any positive fixed point of the operator $T^{\ep,\tau} $ with $\tau\in [0, 1]$. In particular, there is no fixed point of $T^{\ep,\tau}$ on the boundary of $K_\alpha$ for $\tau\in (0,1]$. For $\tau=0$, the operator $T^{\ep, 0}$ is uncoupled and hence we can compute the set of nonnegative fixed points of $T^{\ep,0}$, which is $\{(0, 0), (A^{\ep, *}_1, 0), (0, A^{\ep, *}_2),(A_1^{\ep, *}, A^{\ep, *}_2)\} $. None of those points lie in the boundary of $K_\alpha$. In particular, \cite[Theorem 11.8]{Brown2014} applies and shows that the Leray-Schauder degree in $K_\alpha$ is independent of $\tau$, \textit{i.e.}
\begin{equation*}
	\deg(I-T^{\ep, 0}, K_\alpha)=\deg(I-T^{\ep,1}, K_\alpha).
\end{equation*}
Since $T^{\ep, 0}$ is uncoupled with respect to $(A_1, A_2)\in \Co(\Sigma_1)\times\Co(\Sigma_2)$, the product property of the Leray-Schauder degree (see \cite[Theorem 11.3]{Brown2014}) implies that
\begin{equation*}
	\deg(I-T^{\ep, 0}, K_\alpha)=\deg(I-T^{\ep}_{1}, K^1_\alpha)\times\deg(I-T^\ep_{2}, K^2_{\alpha}), 
\end{equation*}
where $K^k_\alpha:=\{A_k\in \Co (\Sigma_k)\,|\, A_k(x)>\alpha, \forall x\in \Sigma_k\}$ for $k\in\{1,2\}$. Finally, since $T^\ep_{k} $  has exactly one fixed point in $K^k_\alpha$ and $1\notin \sigma\left(D_{A_k^{\ep, *}}T^\ep_{k}\right)$, the degree of the nonlinear operator $T^\ep_{k} $ can be linked to the degree of its Fréchet derivative near $A^{\ep, *}_k$ (see \cite[Theorem 22.3]{Brown2014})
\begin{equation*}
	\deg(I-T^\ep_{k}, K^k_\alpha)=\deg(I-D_{A_k^{\ep, *}}T^\ep_{k}, B(0, 1)),
\end{equation*}
where $B(0,1)$ is the open ball of radius $1$  in $\Co(\Sigma_k)$.
The explicit formula of the degree of linear operators (see \cite[Theorem 21.10]{Brown2014}) allows us to  conclude that
\begin{equation*}
	\deg(I-D_{A_k^{\ep, *}}T^\ep_{k}, B(0, 1))=1, 
\end{equation*}
since $\sigma(D_{A_k^{\ep, *}}T^\ep_{k})\subset (-1, 1)$ for $k\in\{1,2\}$. This shows \eqref{eq:LSD} and ends the proof of Lemma \ref{lem:LSD} in the case $R_{0, 2}>1$.\medskip

	\noindent{\sc Part 2: the case $R_{0,2}\leq 1$.} In this case we cannot use the same homotopy as in Part 1 to compute the Leray-Schauder degree, because $T^\ep_{2}$ has no nonnegative nontrivial fixed point. Instead, we define, for $\tau\in[0, 1]$, $A\in \Co_+(\Sigma)$ and $(A_1,A_2):=(\chi_{\Sigma_1} A, \chi_{\Sigma_2} A)$, the operators
	\begin{equation}\label{eq:TT^ep_tau-2}
		\begin{aligned}
			T^{\ep, \tau}_1(A) :&= \dfrac{\chi_{\Sigma_1}L_1^\varepsilon {A_1}}{1+\theta^{-1}\int_{\mathbb R^N}\beta_1(y)A_1(y)\dd y}+\dfrac{\chi_{\Sigma_1}L_2^{\varepsilon, \tau} {A_2}}{1+\theta^{-1}\int_{\mathbb R^N}\beta_2^\tau(y)A_2(y)\dd y}, \\
			T^{\ep, \tau}_2(A) :&= \dfrac{\chi_{\Sigma_2}L_1^\varepsilon {A_1}}{1+\theta^{-1}\int_{\mathbb R^N}\beta_1(y)A_1(y)\dd y}+\dfrac{\chi_{\Sigma_2} L_2^{\varepsilon, \tau} {A_2}}{1+\theta^{-1}\int_{\mathbb R^N}\beta_2^\tau(y)A_2(y)\dd y},\\
			T^{\ep, \tau}(A):&=T^{\ep, \tau}_1 A+T^{\ep, \tau}_2 A.
		\end{aligned}
	\end{equation}
	where $\beta_2^\tau(y):=\left(1+\tau\left(\frac{2}{R_{0,2}}-1\right)\right)\beta_2(y)$, $\Psi_2^\tau(y):=\frac{\beta_2^\tau(y)r_2(y)}{\delta(\theta + d_2(y))}$ and
	\begin{equation*}
L^{\ep, \tau}_2\varphi=\dfrac{\Lambda}{\theta}\int_{\Omega_2}m_{\ep }(x-y) \xi_2\Psi_2^\tau(y)\varphi(y)\dd y
	\end{equation*}
	which is well-defined since $R_{0,2}>0$ (recall  that $\Psi_2\not\equiv 0$ by Assumption \ref{Assump:Psi}). This corresponds to artificially increasing the basic reproductive number of the second equation until it becomes greater than 1. In particular, for $\tau =1$ we are in the same situation as in Part 1 since 
	$$\dfrac{\Lambda}{\theta}\xi_2 \|\Psi_2^1\|_{L^\infty}=2.$$
	Note that, as above, there exists $M>0$ such that for all $\tau\in [0,1]$, any fixed point $A^\tau\in \Co_+(\Sigma)$ of $T^{\ep, \tau}$ satisfies $\|A^\tau\|_{L^1(\Sigma)}\leq M$. 
Here our only task consists in finding a uniform lower bound for the fixed point of $T^{\ep,\tau}$. \medskip

	\noindent\textbf{Claim:} There is $\alpha>0$ such that for any $\tau\in(0,1)$ and any nonnegative nontrivial $A^\tau$ solution to $T^{\ep, \tau} A^\tau=A^\tau$, we have $A^\tau\in K_\alpha$.\medskip
	
	Indeed, let $A^\tau $ be such a fixed point. We first remark that, {as in Step one of Part one, reasoning as in the proof of the estimate \eqref{eq:lowerbound} in Lemma \ref{Lemma:Prel_3}  we get the estimate: 
}  
	\begin{equation*}
		\int_{\R^N}\beta_1(y)A^{\tau}_1(y)\dd y\geq \dfrac{\theta}{2}(r_\sigma(L^\ep_1)-1)>0.
	\end{equation*}
	Thus, we have
	\begin{equation*}
		A^{\tau}_1(x)\geq \dfrac{\Lambda \xi_k c(\ep)}{\theta+M \|\beta_1\|_{L^\infty} }\geq \eta>0, \qquad \forall x\in\Sigma_1
	\end{equation*}
	for some constants $M>0$ and $\eta>0$. To estimate $A_2^\tau$, we remark that
	$$A_2^\tau\geq \chi_{\Sigma_2}T^\ep_{1}A_1^\tau = \frac{\chi_{\Sigma_2}L_1^\ep {A_1^\tau}}{1+\theta^{-1}\int_{\R^N}\beta_1(y)A_1^\tau(y)\dd y}=\dfrac{\Lambda}{\theta}\dfrac{\chi_{\Sigma_2}\int_{\Omega_1}m_{\ep }(\cdot-y) \xi_1\Psi_1(y)A^\tau_1(y)\dd y}{1+\theta^{-1}\int_{\R^N}\beta_1(y)A_1^\tau(y)\dd y}$$
	and, as in Part one,  we have $A^\tau_1=T^{\ep, \tau}_1A^\tau\leq L^\ep_1A^\tau_1+o(\ep^\infty)$, and thus
	\begin{align*}
		\eta&\leq \int_{\mathbb R^N}\beta_1(x)A^\tau_1(x)\dd x\leq \frac{\Lambda\xi_1}{\theta}\iint_{\mathbb R^N\times \mathbb R^N}\beta_1(x)m_\ep (x-y)A^\tau_1(y)\Psi_1(y)\dd y\dd x +o(\ep^\infty)\\
		&\leq \frac{\Lambda\xi_1\Vert \beta_1\Vert_\infty}{\theta}\int_{\mathbb R^N}\Psi_1(y)A^\tau_1(y)\dd y+o(\ep^\infty).
	\end{align*}
	We conclude
	$$A^\tau_2(x) \geq \dfrac{\Lambda\xi_1 \eta}{\theta+M \|\beta_1\|_{L^\infty}}\min_{x\in \Sigma_2}\int_{\Omega_1}m_\ep(x-y)\dd y>0$$
	for every $x\in \Sigma_2$. This proves our Claim.\medskip

	To finish the proof of the second part, we remark that   the Leray-Schauder degree is independent of $\tau$ (see \cite[Theorem 11.8]{Brown2014}), \textit{i.e.}
	\begin{equation*}
		\deg(I-T^{\ep,0}, K_\alpha)=\deg(I-T^{\ep,1}, K_\alpha),
	\end{equation*}
	and we have proved in Part 1 that, for $\alpha$ sufficiently small, we have $ \deg(I-T^{\ep,1}, K_\alpha)=1$. This finishes the proof of Lemma \ref{lem:LSD}.
\end{proof}

\begin{lemma}\label{Lemma:Nbr_Eq}
	There exists $\ep_0>0$ such that for every $\ep\in(0,\ep_0]$, there is a finite number of nonnegative nontrivial fixed {points} of $T^{\ep}$. 
\end{lemma}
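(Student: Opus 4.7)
The plan is to combine Lemmas \ref{Lemma:Spec} and \ref{lem:LSD} with a compactness-plus-isolation argument. I would show that each nonnegative nontrivial fixed point of $T^\varepsilon$ is isolated in $\Co(\Sigma)$ and that the set of all such fixed points is a compact subset of $\Co(\Sigma)$; a compact discrete set is finite.

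First, I would verify that every nonnegative nontrivial fixed point $A^\varepsilon$ is isolated. Since $0\leq T^\varepsilon\leq L^\varepsilon$ and $L^\varepsilon$ is compact (Lemma \ref{Lemma:Prel_1}), $T^\varepsilon$ is compact, so its Fréchet derivative $D_{A^\varepsilon}T^\varepsilon$ is compact as well, and $I-D_{A^\varepsilon}T^\varepsilon$ is a Fredholm operator of index zero on $\Co(\Sigma)$. By Lemma \ref{Lemma:Spec}, $1\notin \sigma(D_{A^\varepsilon}T^\varepsilon)$, so $I-D_{A^\varepsilon}T^\varepsilon$ is an isomorphism of $\Co(\Sigma)$. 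The inverse function theorem applied to $F(A):=A-T^\varepsilon(A)$ at $A=A^\varepsilon$ then furnishes an open neighbourhood of $A^\varepsilon$ in which $A^\varepsilon$ is the unique zero of $F$, proving isolation.

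Second, I would show that the set
\begin{equation*}
\mathcal F^\varepsilon := \left\{ A\in\Co_+(\Sigma)\setminus\{0\}\,:\,T^\varepsilon A=A\right\}
\end{equation*}
is compact in $\Co(\Sigma)$. By Lemma \ref{lem:LSD}, $\mathcal F^\varepsilon\subset K_{\alpha(\varepsilon)}$; the uniform $L^1$-bound on fixed points provided by Theorem \ref{Thm:main}, together with the smoothing action of the convolution by $m_\varepsilon$ appearing in the definition of $T^\varepsilon$, yields a uniform $\Co(\Sigma)$-bound on $\mathcal F^\varepsilon$. Then $\mathcal F^\varepsilon = T^\varepsilon(\mathcal F^\varepsilon)$ is the image of a bounded set under the compact map $T^\varepsilon$, hence relatively compact, and it is closed by continuity of $T^\varepsilon$.

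Combining these two observations, $\mathcal F^\varepsilon$ is a compact subset of $\Co(\Sigma)$ in which every point is isolated, hence it is finite. I do not anticipate any serious obstacle, as the two key ingredients (the spectral non-resonance from Lemma \ref{Lemma:Spec} and the uniform lower bound from Lemma \ref{lem:LSD}) have already been established.
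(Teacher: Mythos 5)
Your argument is correct and uses exactly the same two ingredients as the paper: compactness of $T^\ep$ (hence of the set of fixed points, given the a priori bounds) and the non-resonance $1\notin\sigma(D_{A^\ep}T^\ep)$ from Lemma~\ref{Lemma:Spec}. The paper packages these as a contradiction argument (a convergent sequence of distinct fixed points would yield, after normalising differences, a nontrivial eigenfunction of the linearisation at the limit with eigenvalue $1$), whereas you phrase it as isolation via the inverse function theorem plus compactness of the fixed-point set; these are the same proof in two standard dialects.
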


\begin{proof}
	Let $\ep>0$ and assume by contradiction that there exist infinitely many nontrivial (thus positive) fixed points of $T^{\ep}$. Since  $T^\ep$ is compact from $\Co_+(\Sigma)$ into itself, there exist a sequence $A^n\in \overline{K_\alpha}$ of fixed points of $T^\ep$ and    $\overline{A}$ such that
	$$\|A^n-\overline{A}\|_{L^\infty}\xrightarrow[n\to\infty]{}0.$$  
	By definition we have $T^{\ep}(A^n)=A^n$ for every $n\in \N$.  By the  continuity of $T^{\ep}$  we get 
	$$T^{\ep}(\overline{A})=\overline{A}.$$
	Since $T^\ep$ is Fréchet differentiable at the point $\overline A$, we have as $n\to\infty$
	$$\dfrac{\overline{A}-A^n}{\|\overline{A}-A^n\|_{L^\infty}}=\dfrac{T^{\ep}(\overline{A})-T^{\ep}(A^n)}{\|\overline{A}-A^n\|_{L^\infty}}=\dfrac{1}{\|\overline{A}-A^n\|_{L^\infty}}D_{\overline{A}}T^{\ep} \left(A^n-\overline{A}\right)+o(1).$$
	Let us define
	$$U^n:=\dfrac{\overline{A}-A^n}{\|\overline{A}-A^n\|_{L^\infty}},$$
	then we have
	$$U^n=D_{\overline{A}} T^{\ep} U^n+o(1)\text{ as }n\to\infty.$$
	By the compactness of $T^{\varepsilon}$, we can extract from $U^n$ a subsequence $\bar U^n$ which converges to $U^\infty$ with  $\Vert U^\infty\Vert_{L^\infty}=1$. We conclude
	$$U^\infty=D_{\overline{A}} T^{\ep,1}U^\infty$$
	which is a contradiction since $1\not\in \sigma(D_{\overline{A}} T^{\ep,1})$ by Lemma \ref{Lemma:Spec}. This finishes the proof of Lemma \ref{Lemma:Nbr_Eq}.
\end{proof}

We can finally prove our uniqueness result for $\varepsilon>0$ small.

\begin{proof}[Proof of Theorem \ref{Thm:uniqueness}.]
	By Lemma \ref{Lemma:Nbr_Eq}, there exists a finite number $N_\ep$ of fixed points of $T^\ep$. Denote by $A^{\ep,i}$, $i\in \llbracket 1,N_\ep \rrbracket$ an enumeration of the  fixed points of $T^\ep_\tau$. By the additivity property of the Leray-Schauder degree (see \cite[Theorem 11.4, p. 79]{Brown2014} and \cite[Theorem 11.5, p. 79]{Brown2014}), we get
	\begin{flalign}
		\deg\left(I-{T^{\ep}}, K_\alpha  \right)&=\deg\left(I-{T^{\ep}}, \bigcup_{i=1}^{N_\ep} B(A^{\ep,i}, \eta)\right) \nonumber \\
		&=\sum_{i=1}^{N_\ep} \deg\left(I-{T^{\ep}}, B(A^{\ep,i}, \eta)\right),
		\label{Eq:deg=N}
	\end{flalign}
	for $\eta>0$ sufficiently small, where $\alpha>0$ is the constant from Lemma \ref{lem:LSD} and $B(A^{\ep,i}, \eta)$ is the ball of center $A^{\ep, i}$ and of radius $\eta$ in $\Co(\Sigma)$. Next, using \cite[Theorem 22.3]{Brown2014}, we can link the degree of $T^\ep$ to the one of its Fréchet derivative close to a fixed point
	$$\deg\left(I-{T^{\ep,1}}, B(A^{\ep,i}, \eta)\right)=\deg\left(I-{D_{A^{\ep,i}} T^{\ep,1}}, B(0,1) \right)=1$$
	for $\eta>0$ sufficiently small and for every $i\in \llbracket 1,N_\ep \rrbracket$. This leads to
	$$\deg\left(I-{T^{\ep,1}}, K_\alpha \right)=N_\ep,$$
	where we have used  \eqref{Eq:deg=N}. Since we have shown in Lemma \ref{lem:LSD} that $\deg(I-T^\ep, K_\alpha)=1$, we conclude that $N_\ep=1$.  We have proven the uniqueness of the nonnegative nontrivial fixed point of $T^\ep$ for $\ep>0$ small, which completes the proof of Theorem \ref{Thm:uniqueness}.
\end{proof}

\section*{Acknowledgement}
The authors would like to thank the anonymous referee for his valuable comments which helped to improve the overall quality of the manuscript.

\appendix

\section{Spectral properties of a weighted convolution operator}\label{Appendix-spectral}

In this appendix, we state and recall some basic spectral properties of a weighted convolution operator as in \eqref{Eq:L_ep}, {\it i.e.} of the form
\begin{equation}\label{Eq:Appendix}
	L^\ep=m_\ep\star \left(\Psi\,\cdot\right),
\end{equation}
where $m_\ep=\ep^{-N} m\left(\ep^{-1}\cdot\right)$ with $\ep>0$.
Throughout this appendix, we assume
\begin{assumption}\label{ASS-Appendix}
	The function $m$ satisfies Assumption \ref{Assump:Psi} $c)$ and $\Psi:\R^N\to [0,\infty)$ is a non-zero continuous function tending to $0$ at $\|x\|=+\infty$.
\end{assumption}
The above assumption allows us to directly apply the results presented in this Appendix to operator $L^\ep$ as well as to $L_1^\ep$ and $L_2^\ep$ as defined in \eqref{Eq:Lk_ep}.

We start this section by reminding the following definition about positive operators:
\begin{definition}
	Let $p\in [1,\infty)$, $I\subset \R^N$ and $K\in\L(L^p(I))$ be given. We denote by
	$$L^p_+(I):=\{\varphi\in L^p(I): \varphi(x)\geq 0 \text{ a.e.}\}$$
	the positive cone of $L^p(I)$. Let $\langle \cdot, \cdot \rangle$ be the duality product between $L^p(I)$ and  $L^{p'}(I)$ where $1/p+1/p'=1$. For $\varphi \in L^p(I)$, the notation $\varphi\gneqq 0$ will refer to $\varphi\in L^p_+(I)$ and $\varphi\not\equiv 0$ while the notation $\varphi>0$ will refer to $\varphi\in L^p_+(I)$ and $\varphi(x)>0$ a.e. We say that
	\begin{enumerate}
		\item $K$ is positive if $K \left( L^p_+(I) \right) \subset L^p_+(I)$; 
		\item $K$ is said to be positivity improving if $K$ is positive and if, for every $\varphi\in L^p(I)$, $\varphi\gneqq 0$ and $\phi\in L^{p'}(I)$, $\phi\gneqq 0$,   we have $\langle K \varphi, \phi \rangle>0$.
	\end{enumerate}
\end{definition}
Consider the non-empty open set $\Omega\subset \R^N$ given by 
\begin{equation*}
	\Omega=\{x\in\R^N:\;{\Psi(x)}\in (0,\infty)\}.
\end{equation*}
We will denote, in the following lemma only, by $L^\ep_p, M^\ep_p$ , the operator $L^\ep$ defined in {\eqref{Eq:Appendix}} and considered as endomorphisms on $L^p(\Omega), L^p(\R^N)$ respectively. 
\begin{lemma}\label{Lemma:Prel_1}
	Let Assumption \ref{ASS-Appendix} be satisfied. Then the following properties are satisfied:
	\begin{enumerate}
		\item \label{item:compactness}
			Let $p\geq 1$. The operators $L^\ep_p$ and $M^\ep_p$ are compact, their spectra $\sigma(L^\ep_p)\backslash \{0\}$ and $\sigma(M^\ep_p)\backslash \{0\}$ are composed of isolated eigenvalues with finite algebraic multiplicity. All these operators share the same spectral radius -- independent of $p$ --  denoted by $r_\sigma(L^\ep)$, which is a positive algebraically simple eigenvalue. There exists a function $\phi^{\ep,1}_p\in L^p(\Omega)$ satisfying
			$$\phi^{\ep,1}_p> 0, \qquad L^{\ep}_p \phi^{\ep,1}_p=r_\sigma(L^{\ep})\phi^{\ep,1}_p.$$
			Moreover $L^\ep_p$ is positivity improving and, if $\phi\in L^p(\R^N)$, $ \phi\gneqq 0$ satisfies the equality $L^\ep_p \phi=\alpha \phi$ for some $\alpha \in \R$, then $\phi > 0$, $\phi\in \text{span}(\phi^{\ep,1}_p)$ and $\alpha=r_\sigma(L^\ep_p)$.  Finally, we have $\sigma(M^\ep_p)=\sigma(L^\ep_p)$.
		\item \label{item:symmetric-spectrum}
			Assume that $\Omega $ is bounded, let $S^\ep$ be the positive self-adjoint operator defined by
			\begin{equation}\label{Eq:S}
				S^{\ep}:L^2(\Omega)\ni \varphi(x) \mapsto \sqrt{\Psi(x)}\int_{\R^N}m_\ep(x-y)\sqrt{\Psi(y)}\varphi(y)\dd y\in L^2(\Omega),
			\end{equation}   
			then for every $p\geq 1$, we have $\sigma(S^\ep)=\sigma(L^\ep_p)\subset \R^+$,  
			and the following  Rayleigh formula holds
			\begin{equation}\label{Eq:Rayleigh}
				r_\sigma(L^\ep)=r_\sigma(S^\ep)=\underset{\|\varphi\|_{L^2(\Omega)}=1}{\sup_{\varphi\in L^2(\Omega)}}\int_{\Omega}\int_{\Omega} \sqrt{\Psi(x)}\sqrt{\Psi(y)}m_\ep(x-y)\varphi(x)\varphi(y)\dd x\dd y.
			\end{equation}

			Moreover,   $r_\sigma(L^\ep)$   satisfies
			\begin{align*}
				r_\sigma(L^\ep)&\xrightarrow[\ep \to 0]{}\|\Psi\|_{L^\infty}.
			\end{align*}

		\item Suppose that $\Omega$ bounded and let $\Sigma\supset \Omega$ be a compact set. The operator $L^\ep_\Sigma$, the realisation of $L^\ep$ in $\Co(\Sigma)$, is compact and one has $\sigma(L^\ep_\Sigma)=\sigma(L^\ep_p)$ for any $p\geq 1$.  
	\end{enumerate}
\end{lemma}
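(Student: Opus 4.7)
The plan is to handle items 1--3 sequentially by reducing each claim to a standard tool. For compactness of $L^\varepsilon_p$ and $M^\varepsilon_p$, I would invoke Fréchet--Kolmogorov: equicontinuity under translation follows from the $L^1$-continuity of translation applied to $m_\varepsilon$, while uniform tightness at infinity follows from $\Psi(x)\to 0$ as $\|x\|\to\infty$ combined with the integrability of $x \mapsto \sup_{\|y\|\le R} m(x+y)$ (which controls the convolution tails uniformly for $\varphi$ in a bounded set). Compactness then yields the claimed spectral structure (isolated eigenvalues of finite algebraic multiplicity away from $0$). For the identity $\sigma(L^\varepsilon_p)=\sigma(M^\varepsilon_p)$ modulo zero, one observes that nonzero-$\lambda$ eigenfunctions of $M^\varepsilon_p$ are determined everywhere by their values on $\Omega$ through $\phi=\lambda^{-1}m_\varepsilon\star(\Psi\phi)$, giving a bijection with eigenfunctions of $L^\varepsilon_p$; the same formula bootstraps any $L^p$-eigenfunction for $\lambda\neq 0$ into $L^\infty$, hence into every $L^q$, so $\sigma(L^\varepsilon_p)$ does not depend on $p$.

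The existence of a simple positive principal eigenvalue then follows from a Krein--Rutman/de Pagter argument: since $m>0$ a.e.\ and $\Psi>0$ on $\Omega$, the operator $L^\varepsilon_p$ is positivity improving, and the associated theorem for compact positivity-improving operators provides the algebraic simplicity of $r_\sigma(L^\varepsilon)>0$, a strictly positive eigenfunction $\phi^{\varepsilon,1}_p$, and the uniqueness of nonnegative eigenfunctions up to scalar multiple. For item 2, self-adjointness of $S^\varepsilon$ on $L^2(\Omega)$ is immediate from Fubini and $m(-x)=m(x)$. The equality of spectra with $L^\varepsilon$ is obtained by the one-to-one correspondence $\varphi\leftrightarrow \sqrt{\Psi}\,\varphi=:\psi$ between nonzero-$\lambda$ eigenfunctions, the inverse direction being given by $\varphi=\lambda^{-1}m_\varepsilon\star(\sqrt{\Psi}\,\psi)\in L^\infty(\mathbb R^N)$. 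Combined with the $p$-independence from item 1, this yields $\sigma(S^\varepsilon)=\sigma(L^\varepsilon_p)$, and the Rayleigh formula becomes the standard variational characterization of the largest eigenvalue of a compact positive self-adjoint operator.

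For the asymptotics $r_\sigma(L^\varepsilon)\to\|\Psi\|_{L^\infty}$, the upper bound is a direct consequence of Young's inequality and $\|m_\varepsilon\|_{L^1}=1$ applied to the Rayleigh quotient; for the lower bound, given $\delta>0$, pick $x_0\in\Omega$ with $\Psi(x_0)\ge\|\Psi\|_{L^\infty}-\delta$ and use the $L^2$-normalized characteristic function of a small ball $B(x_0,r)\subset\Omega$ as test function. Continuity of $\Psi$ together with $m_\varepsilon\star\varphi\to\varphi$ in $L^2$ as $\varepsilon\to 0$ then yields a Rayleigh quotient $\ge \|\Psi\|_{L^\infty}-2\delta$ for $\varepsilon$ small, whence the limit. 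Item 3 is obtained by Arzelà--Ascoli, since the image $L^\varepsilon_\Sigma(\varphi)$ is uniformly bounded and equicontinuous (from $L^1$-continuity of translation of $m_\varepsilon$ and continuity of $\Psi$), combined with the same restriction/extension bijection used for item 1 (a continuous eigenfunction on $\Sigma$ for $\lambda\ne 0$ restricts to an $L^p(\Omega)$ eigenfunction, and conversely any $L^p$ eigenfunction for $\lambda\ne 0$ is automatically continuous by the convolution representation).

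The most delicate step I anticipate is the correspondence between $\sigma(L^\varepsilon_2)$ and $\sigma(S^\varepsilon)$: multiplication by $\sqrt{\Psi}^{-1}$ is unbounded on $L^2(\Omega)$ whenever $\Psi$ vanishes near $\partial\Omega$, so the link cannot be carried out by a direct unitary intertwining. The trick is that the eigenvalue equation itself, with $\lambda\ne 0$, yields a bounded and explicit inverse $\psi\mapsto \lambda^{-1}m_\varepsilon\star(\sqrt{\Psi}\,\psi)$ that lands in $L^\infty(\mathbb R^N)\subset L^2(\Omega)$ thanks to the boundedness of $\Omega$. Careful use of the $\lambda\neq 0$ hypothesis throughout (both for this correspondence and for the bootstrap arguments linking the various functional spaces) is what secures all the spectral identities.
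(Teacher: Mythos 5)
Your proposal follows essentially the same route as the paper: compactness via approximate-identity/tightness arguments (the paper defers this to Djidjou et al.\ 2017), a de~Pagter/Krein--Rutman argument for the simple positive principal eigenvalue, the correspondence $\varphi\leftrightarrow\sqrt{\Psi}\,\varphi$ with the bootstrap $\varphi=\lambda^{-1}m_\varepsilon\star(\sqrt{\Psi}\,\psi)$ to avoid the unboundedness of $1/\sqrt{\Psi}$, the Rayleigh quotient with normalized small-ball indicators for the limit $r_\sigma(L^\varepsilon)\to\|\Psi\|_{L^\infty}$, and Arzel\`a--Ascoli with the same extension identity for item~3. You correctly identified the delicate point ($\lambda\neq 0$ is what licenses the inverse map into $L^\infty$), so your argument matches the paper's in all essentials.
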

\begin{proof}
	Item \ref{item:compactness} is rather classical and has been proved in \cite[Theorem 4.1]{Djidjou2017}. In short, the inclusion $\sigma(M^\ep_p)\subset \sigma(L^\ep_p)$ is straightforward, while the reverse inclusion comes from the fact that any eigenfunction $\phi^\ep$ of $L_p^\ep$ related to the eigenvalue $\lambda^\ep$ can be extended from $L^p(\Omega)$ to $L^p(\R^N)$ by setting 
	\begin{equation}\label{Eq:Identity}
		\phi^{\ep}(x):=\frac{1}{\lambda^{\ep}}\int_{\Omega}m_\ep(x-y)\Psi(y)\phi^{\ep}(y)\dd y , \quad \forall x\in \mathbb R^N\setminus \Omega.
	\end{equation}  

	Let us show Item \ref{item:symmetric-spectrum}. Recall that $\Omega$ is bounded. Let $p\geq 1$, $\lambda\in\sigma(L^\ep_p)$ be an eigenvalue and $\varphi\in L^p(\Omega)\subset L^1(\Omega)$ be the associated eigenvector for $L^\ep_p$, \textit{i.e.}
	\begin{equation*}
		L^\ep_p\varphi(x)=\int_{\Omega}m_\ep(x-y)\Psi(y)\varphi(y)\dd y=\lambda \varphi(x)
	\end{equation*}
	so that $\varphi\in L^\infty(\Omega)$ by the Young inequality. Multiplying the above equation by $\sqrt{\Psi(x)}$, we get:
	\begin{equation*}
		\sqrt{\Psi(x)}\int_{\Omega}m_\ep(x-y)\sqrt{\Psi(y)}\sqrt{\Psi(y)}\varphi(y)\dd y=\lambda \sqrt{\Psi(x)}\varphi(x)\;
		\Longleftrightarrow\;  S^\ep\Phi(x) =\lambda \Phi(x), 
	\end{equation*}
	with $\Phi:=\sqrt{\Psi}\varphi\in L^\infty(\Omega)\subset L^2(\Omega)$. Therefore $\lambda\in\sigma(S^\ep)$. We have shown: 
	\begin{equation*}
		\sigma(L^\ep_p)\subset \sigma(S^\ep), \quad \forall p\geq 1.
	\end{equation*}
	Let us show the reverse inclusion. 
	Note that due to the first item, the operator $S^\ep$ is compact on $L^2(\Omega)$ and therefore  
	$\sigma(S^\ep)$ consists in isolated eigenvalues. Let $\lambda\in\sigma(S^\ep)\backslash\{0\}$ be an eigenvalue and $\Phi\in L^2(\Omega) $ be an associated eigenvector, so that
	\begin{equation*}
		\frac{\Phi(x)}{\sqrt{\Psi(x)}}=\frac{1}{\lambda}\cdot\int_{\Omega}m_\ep(x-y)\sqrt{\Psi(y)}\Phi(y)\dd y\in L^\infty(\Omega).
	\end{equation*}
	Hence there exists a non-zero function $\varphi\in L^\infty(\Omega)\subset L^p(\Omega), \forall p\geq 1$ such that $\Phi=\varphi\sqrt{\Psi}$ where the function $\varphi$ satisfies
	\begin{equation*}
		\lambda \varphi(x)=\int_{\Omega}m_\ep(x-y)\sqrt{\Psi(y)}\sqrt{\Psi(y)}\varphi(y)\dd y = L^\ep\varphi(x).
	\end{equation*}
	Thus $\lambda\in \sigma(L^\ep_p)$ for any $p\geq 1$ and we have shown 
	\begin{equation*}
		\sigma(S^\ep)\subset \sigma(L^\ep_p), \quad \forall p\geq 1.
	\end{equation*} 
	Formula \eqref{Eq:Rayleigh} is classical for positive and symmetric operators.

	Now let $\phi^{\ep, 1}$ be the positive eigenfunction of $L^\ep$ associated with $r_\sigma(L^\ep)$, normalised so that $\int_{\Omega}\phi^{\ep, 1}(y)\dd y=1$. We first notice that
	\begin{align*}
		r_\sigma(L^\ep)&=r_\sigma(L^\ep)\int_{\Omega}\phi^{\ep, 1}(x)\dd x= \iint_{\Omega\times \Omega}m_\ep(x-y)\Psi(y)\phi^{\ep,1}(y)\dd y\dd x\\
		&\leq \Vert \Psi\Vert_{L^\infty}\int_{\Omega}\int_{\Omega}m_\ep(x-y)\dd x\,\phi^{\ep,1}(y)\dd y\leq  \Vert \Psi\Vert_{L^\infty}.
	\end{align*}
	Next let $x_0\in\Omega$ be such that $\Psi(x_0)=\sup_{x\in\Omega}\Psi(x)$. Injecting the function $\frac{1}{\sqrt{|B(x_0, r)|}}\chi_{B(x_0, r)}(x)$ into \eqref{Eq:Rayleigh} yields
	\begin{align*}
		r_\sigma(L^\ep)=r_\sigma(S^\ep)&\geq \frac{1}{|B(x_0, r)|}\iint_{\Omega\times\Omega}\sqrt{\Psi(x)}\sqrt{\Psi(y)}m_\ep(x-y)\chi_{B(x_0, r)}(x)\chi_{B(x_0, r)}(y)\dd y \dd x\\
		&\geq\left(\inf_{x\in B(x_0, r)}\sqrt{\Psi(x)}\right)^2\frac{1}{|B(x_0, r)|}\iint_{B(x_0, r)^2}m_\ep(x-y)\dd x\dd y\\
		&=\inf_{x\in B(x_0, r)}\Psi(x)\frac{1}{|B(x_0, r)|}\varepsilon^N\iint_{B(x_0, r/\varepsilon)^2}m\left(x-y\right)\dd y\dd x\\
		&=\inf_{x\in B(x_0, r)}\Psi(x)\frac{\varepsilon^N\left|B\left(x_0, \frac{r}{\varepsilon}\right)\right|}{|B(x_0, r)|}\int_{B(0, r/\varepsilon)}m\left(y\right)\dd y\\
		&=\inf_{x\in B(x_0, r)}\Psi(x)\int_{B(0, r/\varepsilon)}m\left(y\right)\dd y\underset{\ep\to 0}{\longrightarrow}\inf_{x\in B(x_0, r)}\Psi(x),
	\end{align*}
	for all $r>0$ sufficiently small so that $B(x_0, r)\subset \Omega$. This proves the following inequality
	\begin{equation*}
		\inf_{x\in B(x_0, r)}\Psi(x)\leq \liminf_{\ep\to 0}r_\sigma(L^\ep)\leq \limsup_{\ep\to 0}r_\sigma(L^\ep)\leq \Vert\Psi\Vert_{L^\infty}.
	\end{equation*}
	Since $\lim_{r\to 0} \inf_{x\in B(x_0, r)}\Psi(x) = \Vert\Psi\Vert_{L^\infty}$, Item 2 is proved. 

	Finally we prove the last point, that is Item 3.  As $\Sigma$ is compact the fact that $L^\ep_\Sigma$ is compact follows from the Arzelà-Ascoli theorem. It remains to show that $\sigma(L^\ep_\Sigma)=\sigma(L^\ep_p)$ for any $p\geq 1$. The  inclusion $\sigma(L^\ep_\Sigma)\subset\sigma(L^\ep_p)$ is immediate since $\Co(\Sigma)\subset L^p(\Sigma)$ for every $p\geq 1$ and $\Omega\subset \Sigma$. Let $p\in [1,\infty)$ be given. The reverse inclusion follows from the identity \eqref{Eq:Identity} that allows to extend the eigenfunction from $L^p(\Omega)$ to $L^p(\Sigma)$. Let us notice  that $m_\ep \star (\Psi \phi) \in \Co(\Sigma)$ as soon as $\phi\in L^p(\Sigma)\subset L^1(\Sigma)$ (see \textit{e.g.} \cite[Corollary 3.9.6, p. 207]{Bogachev2007}).

	This ends the proof of Lemma \ref{Lemma:Prel_1}.
\end{proof}

We now give some asymptotic results for compact and positivity improving operators. The following result is classical but here we propose a proof for the sake of completeness.
\begin{lemma}\label{Lemma:Prel_2}
	Suppose that Assumption \ref{ASS-Appendix} holds and let $L^\ep$ be the operator defined in \eqref{Eq:Appendix}, considered as an operator from $L^1(\Omega)$ into itself.
	\begin{enumerate}
		\item \label{item:projector}
			The operator $L^\ep$ satisfies $r_\sigma(L^\ep)>0$ and
			$$\lim_{n\to \infty}\left\| \dfrac{(L^\ep)^n(\varphi)}{(r_\sigma(L^\ep))^n}-\Pi(\varphi)\right\|_{L^1(\Omega)}=0$$
			for every $\varphi\in L^1(\Omega)$, where $\Pi$ is the finite-rank projection into $\Ker\left(I-\frac{L^\ep}{r_\sigma(L^\ep)}\right)$. Moreover $\Pi$ is positivity improving. 
		\item \label{item:opnorm}
			If $r_\sigma(L^\ep)>1$, then 
			$$\lim_{n\to \infty}\|(L^\ep)^n(\varphi)\|_{L^1(\Omega)}=\infty$$
			for every $\varphi\in L^1_+(\Omega)\setminus \{0\}$. If $r_\sigma(L^\ep)<1$, then
			$$\lim_{n\to \infty}\|(L^\ep)^n(\varphi)\|_{L^1(\Omega)}=0$$
			for every $\varphi \in L^1_+(\Omega)\setminus\{0\}$.
	\end{enumerate}
\end{lemma}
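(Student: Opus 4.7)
The plan is to establish the asymptotic behaviour by means of a spectral decomposition of the compact operator $L^\ep$, using that $r_\sigma(L^\ep)$ is, by positivity-improving compactness, the only eigenvalue of modulus $r_\sigma(L^\ep)$. From Lemma \ref{Lemma:Prel_1} item \ref{item:compactness} we already know that $L^\ep\in\mathcal L(L^1(\Omega))$ is compact and positivity improving, that $r_\sigma(L^\ep)>0$ is an algebraically simple isolated eigenvalue with positive eigenvector $\phi^{\ep,1}$, and that $\sigma(L^\ep)\setminus\{0\}$ consists only of isolated eigenvalues with finite algebraic multiplicity. What is needed on top of this is the fact that the peripheral spectrum $\{\lambda\in\sigma(L^\ep):\,|\lambda|=r_\sigma(L^\ep)\}$ reduces to $\{r_\sigma(L^\ep)\}$: this is the classical de Pagter/Niiro--Sawashima statement for irreducible compact positive operators, and is obtained by noting that if $L^\ep\phi=r_\sigma(L^\ep)e^{i\theta}\phi$ with $\phi\neq 0$, then $L^\ep|\phi|\geq r_\sigma(L^\ep)|\phi|$; multiplying by the positive eigenvector of the adjoint $(L^\ep)^*$ and integrating forces the inequality to be an equality, and the positivity-improving property then implies $\theta=0$.

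Once strict dominance is in hand, let $\Gamma$ be a small positively oriented circle around $r_\sigma(L^\ep)$ disjoint from the rest of $\sigma(L^\ep)$, and introduce the Riesz projector
\begin{equation*}
\Pi:=\frac{1}{2i\pi}\oint_{\Gamma}(zI-L^\ep)^{-1}\dd z.
\end{equation*}
Since $r_\sigma(L^\ep)$ is algebraically simple, $\Pi$ is a rank-one projector onto $\mathrm{span}(\phi^{\ep,1})$. Writing $R:=L^\ep-r_\sigma(L^\ep)\Pi$, one has $\Pi R=R\Pi=0$ and $\sigma(R)\subset\sigma(L^\ep)\setminus\{r_\sigma(L^\ep)\}$, so that $r_\sigma(R)<r_\sigma(L^\ep)$. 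Iterating gives $(L^\ep)^n=r_\sigma(L^\ep)^n\Pi+R^n$, and Gelfand's formula yields $\|R^n\|^{1/n}\to r_\sigma(R)$, so that $\|R^n\|/r_\sigma(L^\ep)^n\to 0$ geometrically. This gives the convergence claimed in item \ref{item:projector} in operator norm, and in particular pointwise in $L^1(\Omega)$. To see that $\Pi$ is positivity improving, apply Lemma \ref{Lemma:Prel_1} item \ref{item:compactness} to the adjoint operator $(L^\ep)^*\in\mathcal L(L^\infty(\Omega))$ (whose kernel satisfies the same symmetry assumption as $m$), obtaining a positive eigenvector $\psi^{\ep,1}\in L^\infty(\Omega)$ associated with $r_\sigma((L^\ep)^*)=r_\sigma(L^\ep)$. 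Algebraic simplicity implies $\Pi\varphi=\langle\psi^{\ep,1},\varphi\rangle\langle\psi^{\ep,1},\phi^{\ep,1}\rangle^{-1}\phi^{\ep,1}$, and for $\varphi\gneqq 0$ the scalar $\langle\psi^{\ep,1},\varphi\rangle>0$ since $\psi^{\ep,1}>0$ a.e., hence $\Pi\varphi>0$.

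The two statements of item \ref{item:opnorm} then follow. If $r_\sigma(L^\ep)>1$ and $\varphi\in L^1_+(\Omega)\setminus\{0\}$, then $\Pi\varphi>0$ so $\|\Pi\varphi\|_{L^1(\Omega)}>0$, and item \ref{item:projector} gives $\|(L^\ep)^n\varphi\|_{L^1(\Omega)}\geq\tfrac12 r_\sigma(L^\ep)^n\|\Pi\varphi\|_{L^1(\Omega)}$ for $n$ large, which tends to $+\infty$. If $r_\sigma(L^\ep)<1$, Gelfand's formula directly yields $\|(L^\ep)^n\|\to 0$ geometrically, whence $\|(L^\ep)^n\varphi\|_{L^1(\Omega)}\leq\|(L^\ep)^n\|\|\varphi\|_{L^1(\Omega)}\to 0$. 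The main obstacle in the whole argument is the strict dominance of $r_\sigma(L^\ep)$ described in the first paragraph: everything else is a routine application of the Riesz functional calculus and of Gelfand's formula. This obstacle is however standard, and can be overcome using positivity-improving irreducibility together with the existence of a positive eigenvector for the adjoint operator.
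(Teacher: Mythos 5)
Your proof follows essentially the same route as the paper: decompose $L^\ep$ by means of the rank-one Riesz projector $\Pi$ onto the principal eigenspace, express $\Pi$ via the positive eigenvectors of $L^\ep$ and of its dual, and control $(L^\ep-r_\sigma(L^\ep)\Pi)^n$ by Gelfand's formula, then read off item 2. Two small remarks. First, you spell out the Niiro--Sawashima argument showing that the peripheral spectrum reduces to $\{r_\sigma(L^\ep)\}$, which is indeed the fact underlying the paper's inequality $r_\sigma(L^\ep(I-\Pi))<r_\sigma(L^\ep)$; the paper leaves this implicit, handling it by citation (Davies for the spectral splitting, Krein--Rutman/Zerner for the dual eigenvector), so your version is more self-contained on that one point. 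Second, your way of producing the positive dual eigenvector by ``applying Lemma \ref{Lemma:Prel_1} item \ref{item:compactness} to $(L^\ep)^*\in\mathcal L(L^\infty(\Omega))$'' is not quite right as stated: Lemma \ref{Lemma:Prel_1} treats operators of the form $m_\ep\star(\Psi\,\cdot)$ on $L^p(\Omega)$ with $p<\infty$, whereas $(L^\ep)^*\psi=\Psi\,(m_\ep\star\psi)$ acts on $L^\infty(\Omega)$, which is outside that scope. The conclusion (existence of $\phi'>0$ a.e.\ with $(L^\ep)'\phi'=r_\sigma(L^\ep)\phi'$) is of course correct, but the paper's appeal to the Krein--Rutman theorem together with Zerner's positivity result is the cleaner way to get it, and you should cite those rather than Lemma \ref{Lemma:Prel_1}.
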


\begin{proof}
	{\bf{Step one:}}  since $L^\ep$ is compact and positivity improving, then $r_\sigma(L^\ep)>0$ by \cite[Theorem 3]{Pagter86} and $r_\sigma(L^\ep)$ is a simple eigenvalue of $L^\ep$ (see Lemma \ref{Lemma:Prel_1}). We recall that 
	$$L^1(\Omega)=\Ker \left(I-\frac{L^\ep}{r_\sigma(L^\ep)}\right)\oplus \Rg(I-\Pi).$$  Moreover the projection $\Pi$ is given by the formula
	$$\Pi(\varphi)=\frac{ \langle \phi', \varphi\rangle}{\langle \phi', \phi\rangle}\phi$$
	where $\phi$ and $\phi'$ denote respectively the eigenfunctions of $L^\ep$ and its dual $(L^\ep)'$, associated to $r_\sigma(L^\ep)$. Note that $r_\sigma(L^\ep)$ is a pole of the resolvent of $L^\ep$ and an eigenvalue of $(L^\ep)'$ by the Krein-Rutman theorem (see \textit{e.g.}  \cite[Theorem 4.1.4, p. 250]{MeyerNieberg91} and \cite[Theorem 4.1.5, p. 251]{MeyerNieberg91}). Moreover, $\phi'\gg 0$ (see \textit{e.g.} \cite[Proposition 4]{Zerner87}). Consequently $\Pi$ is positivity improving and for every $\varphi\in L^1(\Omega)$, we have
	$${L^\ep}(\varphi)={L^\ep}(\Pi(\varphi))+{L^\ep}(I-\Pi)(\varphi)=r_\sigma({L^\ep})\Pi(\varphi)+{L^\ep}(I-\Pi)(\varphi).$$
	By induction, for every $n\geq 0$, we get
	\begin{equation*}
		({L^\ep})^n(\varphi)=(r_\sigma({L^\ep}))^n \Pi (\varphi)+\left[{L^\ep}(I-\Pi)\right]^n (\varphi).
	\end{equation*}
	Hence 
	\begin{align*}
		\left\|\dfrac{({L^\ep})^n(\varphi)}{(r_\sigma({L^\ep}))^n}-\Pi(\varphi)\right\|_{L^1({\Omega})}&=\dfrac{\left\|({L^\ep}(I-\Pi))^n (\varphi)\right\|_{L^1({\Omega})}}{(r_\sigma({L^\ep}))^n}\\
		&\leq \dfrac{\left\|({L^\ep}(I-\Pi))^n\right\|_{\L(L^1({\Omega}))}}{(r_\sigma({L^\ep}))^n}\|\varphi\|_{L^1({\Omega})}.
	\end{align*}
	On the one hand it is known (see \textit{e.g.} \cite[Theorem 1.5.4, p. 30]{Davies2007}) that
	$$\sigma({L^\ep}(I-\Pi))=\sigma({L^\ep})\setminus\{r_\sigma({L^\ep})\}, $$
	and therefore
	$$r_\sigma({L^\ep}(I-\Pi))<r_\sigma({L^\ep}).$$
	On the other hand, the Gelfand equality implies that
	$$r_\sigma({L^\ep}(I-\Pi))=\lim_{n\to \infty}\sqrt[n]{\|({L^\ep}(I-\Pi))^n\|_{\L(L^1({\Omega}))}}$$
	so that
	$$\|({L^\ep}(I-\Pi))^n\|_{\L(L^1({\Omega}))} \leq (r_\sigma({L^\ep}(I-\Pi))+\eta)^n$$
	for any $\eta>0$ and $n$ large enough.
	Consequently we have
	$$\lim_{n\to \infty} \left\|\dfrac{({L^\ep})^n (\varphi )}{\left(r_\sigma({L^\ep})\right)^n} -\Pi(\varphi) \right\|_{L^1({\Omega})}\leq  \lim_{n\to \infty}\left(\dfrac{r_\sigma({L^\ep}(I-\Pi))+\eta}{r_\sigma({L^\ep})}\right)^n\|\varphi\|_{L^1({\Omega})}=0$$
	where  $\eta>0$ is chosen such that $r_\sigma({L^\ep}(I-\Pi))+\eta<r_\sigma({L^\ep})$. This completes the proof of the first part of the lemma.
	\medskip

	{\bf{Step two:}}  	suppose first that $r_\sigma({L^\ep})<1$ and let $\varphi\in L^1_+({\Omega})$ be given. Due to the first item we have
	$$0=\limsup_{n\to \infty}\left\| \dfrac{{(L^\ep)}^n(\varphi)}{(r_\sigma({L^\ep}))^n}-\Pi(\varphi)\right\|_{L^1({\Omega})}\geq \limsup_{n\to \infty} \left\| \dfrac{{(L^\ep)}^n(\varphi)}{(r_\sigma({L^\ep}))^n}\right\|_{L^1({\Omega})}-\left\|\Pi(\varphi)\right\|_{L^1({\Omega})}.$$
	Assume by contradiction that
	$$\limsup_{n\to \infty}\|{(L^\ep)}^n(\varphi)\|_{L^1({\Omega})}>0.$$
	Then, there exist $\eta>0$ and a sequence $n_k\to\infty $  such that 
	\begin{equation*}
		\Vert (L^\ep)^{n_k}\Vert_{L^1(\Omega)}\geq \eta>0,\;\forall k\geq 0.
	\end{equation*}
	Therefore, we have 
	\begin{equation*}
		\frac{\eta}{(r_\sigma(L^\ep))^{n_k}}\leq \left\| \dfrac{{(L^\ep)}^{n_k}(\varphi)}{(r_\sigma({L^\ep}))^{n_k}}\right\|_{L^1({\Omega})}\leq \left\|\Pi(\varphi)\right\|_{L^1({\Omega})}+o(1)\text{ as $k\to\infty$}
	\end{equation*}
	which yields a contradiction.

	Consider now the case where $r_\sigma({L^\ep})>1$ and let $\varphi\in L^1_+({\Omega})$ be such that $\int_{{\Omega}}\varphi(y)\dd y>0$. Using again the part part of the lemma, we have 
	$$0=\limsup_{n\to \infty}\left\| \dfrac{{(L^\ep)}^n(\varphi)}{(r_\sigma({L^\ep}))^n}-\Pi(\varphi)\right\|_{L^1({\Omega})}\geq \left\|\Pi(\varphi)\right\|_{L^1({\Omega})}-\limsup_{n\to \infty} \left\| \dfrac{{(L^\ep)}^n(\varphi)}{(r_\sigma({L^\ep}))^n}\right\|_{L^1({\Omega})}.$$
	Assume by contradiction that
	$$\limsup_{n\to \infty}\|{(L^\ep)}^n(\varphi)\|_{L^1({\Omega})}<\infty.$$
	Then, there is $\eta>0$ and a sequence $n_k\to\infty $  such that 
	\begin{equation*}
		\Vert (L^\ep)^{n_k}\Vert_{L^1(\Omega)}\leq \eta<\infty,\;\forall k\geq 0.
	\end{equation*}
	Therefore, we have 
	\begin{equation*}
		\frac{\eta}{(r_\sigma(L^\ep))^{n_k}}\geq \left\| \dfrac{{(L^\ep)}^{n_k}(\varphi)}{(r_\sigma({L^\ep}))^{n_k}}\right\|_{L^1({\Omega})}\geq \left\|\Pi(\varphi)\right\|_{L^1({\Omega})}+o_{k\to\infty}(1),
	\end{equation*}
	which is a contradiction and item \ref{item:opnorm} is proved. This finishes the proof of Lemma \ref{Lemma:Prel_2}.
\end{proof}


\end{document}